\newcommand{\R}{\mathbb R}
\newcommand{\N}{\mathbb N}
\newcommand{\KdV}{\textrm{KdV}}
\def\Re{ \mathrm{Re}\, }
\newcommand{\peq}{\hspace*{0.10in}}
\newcommand{\peqq}{\hspace*{0.05in}}
\numberwithin{equation}{section}
\newtheorem{theorem}{Theorem}[section]
\newtheorem{proposition}[theorem]{Proposition}
\newtheorem{remark}[theorem]{Remark}
\newtheorem{lemma}[theorem]{Lemma}
\newtheorem{corollary}[theorem]{Corollary}
\newtheorem{definition}[theorem]{Definition}
\begin{document}
\vglue-1cm \hskip1cm
\title[Scattering for the gKdV equation]{Large data scattering for the defocusing supercritical generalized  KdV equation}



\author[L. G. Farah]{Luiz G. Farah}
\address{ICEx, Universidade Federal de Minas Gerais, Av. Ant\^onio Carlos, 6627, Caixa Postal 702, 30123-970,
Belo Horizonte-MG, Brazil}
\email{lgfarah@gmail.com}

\author[F. Linares]{Felipe Linares}
\address{IMPA, Estrada Dona Castorina 110, CEP 22460-320, Rio de Janeiro, RJ,
 Brazil}
\email{linares@impa.br}

\author[A. Pastor]{Ademir Pastor}
\address{IMECC-UNICAMP, Rua S\'ergio Buarque de Holanda, 651, 13083-859, Cam\-pi\-nas-SP, Bra\-zil}
\email{apastor@ime.unicamp.br}

\author[N. Visciglia]{Nicola Visciglia}
\address{Universit\`a Degli Studi di Pisa, Dipartimento di Matematica, Largo Bruno Pontecorvo 5,  56127  Pisa, Italy}
\email{nicola.visciglia@unipi.it}


\begin{abstract}
We consider the defocusing supercritical generalized Korteweg-de Vries (gKdV) equation
 $\partial_t u+\partial_x^3u-\partial_x(u^{k+1})=0$,
where $k>4$ is an even integer number. We show that if the initial data $u_0$ belongs to
$H^1$  then the corresponding solution  is global   and scatters in $H^1$.  Our method of proof is inspired on the compactness method introduced by C. Kenig and F. Merle.
\end{abstract}

\maketitle

\section{Introduction}\label{introduction}

We consider solutions of the Initial Value Problem (IVP) associated with the defocusing supercritical generalized Korteweg-de Vries
(gKdV) equation, i.e.,
\begin{equation}\label{gkdv}
\begin{cases}
\partial_t u+\partial_{xxx}u-\partial_x(u^{k+1})=0, \;\;x\in\R, \;t>0, \\
u(0,x)=u_0(x),
\end{cases}
\end{equation}
where $k>4$ is an even integer number.

It is well known that in the focusing case (with the sign `+' in front of the nonlinearity) for $k=1$ and $k=2$ the model above is physical and corresponds to the famous Korteweg-de Vries (\cite{KdV95}) and modified Korteweg-de Vries equation, respectively.

Local well-posedness issues for the IVP \eqref{gkdv} have been of intense study for the last 30 years or more. We refer the reader to Kenig, Ponce, and Vega \cite{kpv1}, \cite{kpv2} for a complete set of sharp results (see also \cite{LP} and references therein). We observe that in the literature the focusing case is always considered but the results in the defocusing case are analogous.  We also notice that in the focusing case
there exist solitary wave solutions  for the equation in  \eqref{gkdv} which is not  true in the other situation.

Here we are mainly interested in the asymptotic behavior of global solutions of the IVP \eqref{gkdv}. 
We recall that the flow of the generalized KdV is conserved by the following quantities
\begin{equation}\label{MC}
\text{Mass}\equiv M[u(t)]=\int u^2(t)\, dx
\end{equation}
and
\begin{equation}\label{EC}
\text{Energy}\equiv E[u(t)]=\frac{1}{2}\int(\partial_xu)^2(t)\;dx+\frac{1}{k+2}\int u^{k+2}(t)\;dx.
\end{equation}

These conserved quantities are fundamental to obtain global well-posedness in the energy $H^1$ since sharp local well-posedness for $k>4$ is
established in Sobolev spaces of order $s>s_k:=\frac12-\frac{2}{k}$ (see \cite{BKPSV96}).  Global well-posedness
results for the IVP   \eqref{gkdv}  for different values of $k$ in the focusing/defocusing cases can be found in \cite{CKSTT3}, \cite{LG6}, \cite{FLP}, 
\cite{FLP1}, \cite{Gr05}, \cite{GPS}, \cite{Gu}, \cite{marmer}, \cite{miao}, \cite{tao1} and references therein.

In the defocusing case and $k>4$, we have the following result.

\begin{theorem}[Global well-posedness for the defocusing gKdV]\label{global5}
	Let $k>4$ be even.
 If $u_0\in H^1$ then the corresponding solution $u$ of \eqref{gkdv} is global in  $H^1$. Moreover, 
 \begin{equation}\label{Apriori}
 \sup_{t\in \R}\|\partial_xu(t)\|^2_{L^2_x}\leq 2E[u_0].
 \end{equation}
 \end{theorem}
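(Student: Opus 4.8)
The plan is to extract a uniform-in-time bound on $\|u(t)\|_{H^1}$ from the two conservation laws \eqref{MC}--\eqref{EC} and then to upgrade the local solution to a global one by the standard continuation argument. The decisive structural fact is that $k$ even forces $k+2$ even, so that $u^{k+2}(t,x)\geq 0$ pointwise; hence the nonlinear term in the energy \eqref{EC} is nonnegative. Using $E[u(t)]=E[u_0]$ I would then write
\begin{equation*}
\frac12\|\partial_x u(t)\|^2_{L^2_x}\leq \frac12\|\partial_x u(t)\|^2_{L^2_x}+\frac{1}{k+2}\int u^{k+2}(t)\,dx=E[u(t)]=E[u_0],
\end{equation*}
which is precisely the claimed estimate \eqref{Apriori}.

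Combining this with mass conservation \eqref{MC} yields the uniform control
\begin{equation*}
\sup_{t}\|u(t)\|^2_{H^1}=\sup_t\big(\|u(t)\|^2_{L^2_x}+\|\partial_x u(t)\|^2_{L^2_x}\big)\leq M[u_0]+2E[u_0],
\end{equation*}
so the $H^1$ norm of the solution cannot diverge in finite time. To conclude, I would invoke the sharp local theory of \cite{BKPSV96}, which gives local well-posedness in $H^s$ for every $s>s_k:=\frac12-\frac2k$. Since $k>4$ forces $0<s_k<\frac12$, the problem \eqref{gkdv} is in particular locally well-posed in $H^1$, with a maximal interval of existence and the usual blow-up alternative. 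The uniform bound above rules out blow-up in finite time, so the solution extends to all $t\in\R$, first forward and then, by the same reasoning, backward in time.

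The one point that must be handled with care --- and which I regard as the crux --- is that the local existence time depends on the data only through $\|u_0\|_{H^1}$, so that the solution can be restarted on successive intervals of uniform length once that norm is controlled. This is guaranteed by the $H^1$-subcriticality $s_k<1$, valid for every $k>4$. It is worth noting that the defocusing sign enters nowhere in the local theory; its sole but indispensable role is in the a priori estimate, where the positivity of $\int u^{k+2}\,dx$ is what allows the conserved energy to dominate $\|\partial_x u(t)\|^2_{L^2_x}$.
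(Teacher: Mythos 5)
Your proof is correct and follows essentially the same route as the paper: the paper's (very brief) argument likewise combines the $H^1$ local well-posedness theory of \cite{kpv1} with conservation of mass \eqref{MC} and energy \eqref{EC}, using that for $k$ even the defocusing energy is a sum of nonnegative terms, so it dominates $\|\partial_x u(t)\|_{L^2_x}^2$ and yields \eqref{Apriori}. Your additional observations --- that $s_k<1$ makes the problem $H^1$-subcritical so the local existence time depends only on $\|u_0\|_{H^1}$, allowing the continuation argument --- are exactly the details the paper leaves implicit.
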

 
 This is a consequence of the local well-posedness in $H^1$ result proved in \cite{kpv1} and the conservation of mass \eqref{MC} and energy \eqref{EC}. Indeed, for any local solution we can control its $H^1$-norm since the energy controls the $L^2$-norm of the gradient and the mass is preserved.

Our aim in this paper is to prove that the solutions provided by Theorem \ref{global5} scatter  in $H^1$.  Roughly, we will show that solution of the nonlinear problem behave asymptotically at infinity like a solution of the associated linear problem in $H^1$.

 Before state our main result we will describe some previous nonlinear scattering results for the solutions of the IVP \eqref{gkdv}. In \cite{PV}, Ponce and Vega showed that, for small data, solutions of the focusing  gKdV equation scatter in $H^1$, for $k>4$ without extra conditions on the data.  This result is still valid in the focusing case.  In \cite{kpv1}, Kenig, Ponce and Vega  show that for small data
in $L^2$, solutions of the gKdV, $k=4$, scatters in $L^2$.  For $k=3$ Koch and Marzuola in \cite{koch-m} proved scattering  for small data in $\dot{H}^{-\frac16}$ (see also \cite{tao}). 

For large data, Cot\^e \cite{RC} constructed the wave operator in $H^1$ (respectively $L^2$) for the focusing case and $k>4$ (respectively $k=4$). This  is  the reciprocal  problem  of  the  scattering  theory,  which  consists  in  constructing  a  solution  with  a prescribed scattering state.This result was extended by Farah and Pastor in \cite{FP}, for $k \ge 4$, where the authors showed the existence of wave operator in the critical space $\dot{H}^{s_k}$.

Kenig and Merle \cite{kenig-merle}, \cite{kenig-merle-nlw} introduced a different point of view to study scattering and asymptotic behavior of nonlinear dispersive equations. Their results  open a wide spectrum
of possibilities to attack these kind of problems. Our purpose here is to develop a parallel theory applied to the  defocusing gKdV. More
precisely,
\begin{theorem}\label{global4}
Let $k>4$ be even.
If $u_0\in H^1(\R)$ then the corresponding global solution $u$ of \eqref{gkdv} given by Theorem \ref{global5} scatters in both directions, that is, there exist $\phi^{+}, \phi^{-}\in H^1(\R)$ such that
\begin{equation*}
\lim_{t\rightarrow +\infty} \|u(t)-U(t)\phi^+\|_{H^1}=0
\end{equation*}
and
\begin{equation*}
\lim_{t\rightarrow -\infty} \|u(t)-U(t)\phi^-\|_{H^1}=0.
\end{equation*}
\end{theorem}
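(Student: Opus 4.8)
The plan is to implement the Kenig--Merle concentration--compactness and rigidity scheme, adapted so that in the defocusing regime the usual variational/soliton obstruction is replaced by a favorable-sign virial mechanism. As a preliminary step I would develop the small-data and perturbation theory. Writing $U(t)$ for the Airy propagator and using the Duhamel representation
\[
u(t)=U(t)u_0+\int_0^t U(t-t')\,\partial_x\big(u^{k+1}(t')\big)\,dt',
\]
I would fix a \emph{scattering norm} $\|\cdot\|_{S(I)}$ on a time interval $I$, assembled from the Strichartz, Kato smoothing and maximal-function norms adapted to $U(t)$ that control the right-hand side after integrating the derivative in the nonlinearity onto the smoothing factor. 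A standard contraction then yields: (i) small $H^1$ data (equivalently, small $\|U(t)u_0\|_{S}$) produce global solutions with $\|u\|_{S(\R)}<\infty$; and (ii) a long-time perturbation lemma, asserting that an approximate solution with finite scattering norm and small error is shadowed by a genuine solution. The crucial reduction is that $\|u\|_{S(\R)}<\infty$ is \emph{equivalent} to scattering in $H^1$: finiteness of the scattering norm makes $\int_0^{\pm\infty}U(-t')\partial_x(u^{k+1})\,dt'$ converge in $H^1$ to the sought states $\phi^{\pm}$.

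Next I would extract a critical element. Assume, for contradiction, that scattering fails. Using that $M[u]$ and $E[u]$ are conserved and, by Theorem \ref{global5}, jointly control $\sup_t\|u(t)\|_{H^1}$, define the critical level
\[
\mu_c=\sup\big\{\mu>0:\ \|u\|_{S(\R)}<\infty\ \text{whenever}\ M[u_0]+E[u_0]\le\mu\big\}.
\]
By (i) we have $\mu_c>0$, and failure of the theorem forces $\mu_c<\infty$. Choosing solutions $u_n$ with $M[u_{0,n}]+E[u_{0,n}]\to\mu_c$ and $\|u_n\|_{S(\R)}\to\infty$, I would apply a linear profile decomposition to the bounded sequence $\{u_{0,n}\}\subset H^1$; here the only symmetry parameters are space and time translations, because the $H^1$-subcritical scaling ($s_k=\tfrac12-\tfrac2k<1$) rules out scale concentration. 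The $L^2$-orthogonality of the profiles (decomposition of the mass) together with the corresponding Pythagorean expansion of the energy, and the perturbation lemma (ii), force a single nonlinear profile to carry the whole critical level. This produces a critical solution $u_c\not\equiv0$ with $M[u_c]+E[u_c]=\mu_c$, $\|u_c\|_{S(\R)}=\infty$, $E[u_c]>0$, and whose orbit is precompact in $H^1$ modulo translations: there is $x(t)$ with $\{u_c(t,\cdot+x(t)):t\in\R\}$ relatively compact in $H^1$.

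Finally I would eliminate the critical element by a virial argument exploiting the defocusing sign. A direct computation gives the monotonicity identity
\begin{equation*}
\frac{d}{dt}\int x\,u^2\,dx=-3\int(\partial_xu)^2\,dx-\frac{2(k+1)}{k+2}\int u^{k+2}\,dx\le -6\,E[u_0],
\end{equation*}
both terms on the right being nonpositive since $k$ is even. As $\int x u^2$ is ill-defined for general $H^1$ data, I would run the localized quantity $I_R(t)=\int \phi\big((x-x(t))/R\big)\,u_c^2\,dx$ with a truncated weight $\phi(y)\sim R\,\psi(y/R)$: compactness of the orbit keeps the mass concentrated near $x(t)$, so $I_R$ stays bounded by $\sim R\,M[u_c]$ uniformly in $t$, while the cutoff produces error terms that are made uniformly small by taking $R$ large. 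Coercivity, $E[u_c]=\mu_c-M[u_c]$ bounded below away from $0$, then yields $I_R'(t)\le -c<0$ once $R$ is fixed large; integrating over $[0,T]$ contradicts the boundedness of $I_R$ as $T\to\infty$, whence $u_c\equiv0$ and $\mu_c=\infty$.

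I expect the rigidity step to be the main obstacle. Beyond the sign bookkeeping, the delicate points are bounding the translation speed $\dot x(t)$ so the \emph{moving} cutoff does not generate uncontrolled contributions, and closing the localized estimates in the presence of the derivative nonlinearity and the third-order dispersion, which create more boundary terms than in the NLS setting. The function-space construction in the first step, which must juggle Strichartz, smoothing and maximal-function norms simultaneously, is the other place where genuine care is required; the concentration--compactness step itself is then comparatively routine once (i), (ii), and the profile decomposition are in place.
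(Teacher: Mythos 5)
Your first two stages (the $L^{5k/4}_xL^{5k/2}_t$ scattering criterion and perturbation theory, then the extraction via a translation-only profile decomposition of a critical element $u_c$ with $M[u_c]+E[u_c]=\mu_c$ and orbit precompact in $H^1$ modulo a path $x(t)$) coincide with Sections \ref{MofR}--\ref{sec5} of the paper. The genuine gap is in your rigidity step. For $I_R(t)=\int \varphi_R(x-x(t))u_c^2\,dx$ with $\varphi_R(y)=R\psi(y/R)$, $\psi(y)=y$ near $0$, the identity $\partial_t\rho+\partial_{xxx}\rho=\partial_x j$ (with $\rho=u_c^2$ and $j=3(\partial_xu_c)^2+\tfrac{2(k+1)}{k+2}u_c^{k+2}\ge0$) gives
\begin{equation*}
\frac{d}{dt}I_R(t)=-\dot x(t)\int \varphi_R'(x-x(t))\,\rho\,dx+\int \varphi_R'''(x-x(t))\,\rho\,dx-\int \varphi_R'(x-x(t))\,j\,dx .
\end{equation*}
The last two terms behave as you claim, $O(R^{-2})$ and $\le -6E[u_c]+o_R(1)$ respectively, but the first term is $\approx -\dot x(t)\,M[u_c]$, and nothing in the construction controls $\dot x(t)$: the path furnished by precompactness is merely continuous, with no a priori bound on its increments, and gKdV has no Galilean boost to normalize a momentum to zero (for KdV the generator of translations is the mass itself, which is nonzero for $u_c\not\equiv0$). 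The dangerous direction is moreover physically natural: for the Airy flow the group velocity $-3\xi^2$ is negative, so leftward drift of $x(t)$ at speed exceeding $6E[u_c]/M[u_c]$ cannot be excluded, and in that regime $\frac{d}{dt}I_R$ is no longer bounded above by a negative constant; integration then yields no contradiction with $|I_R|\lesssim R\,M[u_c]$. You flag "bounding the translation speed $\dot x(t)$" as a delicate point, but it is not a technicality one can clean up afterwards; it is exactly the obstruction the authors refer to when they write that gKdV lacks useful virial estimates, and it is why the paper's Section \ref{sec6} does something structurally different.

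What the paper does instead is an \emph{interaction} (two-point) Morawetz argument: it considers $\int\!\!\int Q_R(x-y)\,\rho(t,x)\,e(t,y)\,dx\,dy$, where $e$ is the energy density and the weight depends only on $x-y$, hence is insensitive to the common translation; $x(t)$ enters only to locate where the densities concentrate and is never differentiated. By the transport identities \eqref{transp}, its time derivative is $\int\!\!\int Q_R'(x-y)\bigl[\rho(t,x)k(t,y)-j(t,x)e(t,y)\bigr]dx\,dy$, and the needed strict positivity of the diagonal contribution comes from Tao's inequality \eqref{keyobservation}, $\bigl(\int\rho\bigr)\bigl(\int k\bigr)>\bigl(\int e\bigr)\bigl(\int j\bigr)$ from \cite{tao1}, localized to $|x-x(t)|<R$ via the compactness (Lemma \ref{fipa}); linear-in-$T$ growth then contradicts the uniform bound $\lesssim R\,M[u_c]\,E[u_c]$. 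The price of this route is that the flux $k$ contains $(\partial_{xx}u)^2$, so the paper must invoke Kato local smoothing ($u(t)\in H^2_{loc}$ for a.e.\ $t$, Proposition \ref{regprop}) and a density/Fatou argument to make sense of the estimates for $H^1$ solutions; your single-particle functional avoids that issue but, as written, its monotonicity fails and the rigidity step does not close.
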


Here, $U(t)$ denotes the group of linear operators associated with the linear
KdV equation, that is, for any $u_0\in H^s(\R)$, $s\in \R$, $U(t)u_0$ is the solution of the linear problem
\begin{equation*}
\begin{cases}
\partial_t u+\partial_{xxx}u=0, \;\;x\in\R, \;t\in\R, \\
u(0,x)=u_0(x).
\end{cases}
\end{equation*}

To prove Theorem \ref{global4}, we follow very closely the arguments given by Kenig and Merle  in \cite{kenig-merle} and \cite{kenig-merle2}.

We first establish a perturbation theory and a scattering criteria (see \cite{KPV6} and references therein). Then we obtain a profile decomposition. More precisely, we establish that any bounded sequence in $H^1$ has a decomposition into linear profiles and a reminder with a suitable asymptotic smallness property. To do this we follow \cite{dhr} and \cite{CFX}  (see also\cite{PG}, \cite{FP16}). Next we combine this decomposition with a compactness argument in the same lines of Kenig and Merle to produce a critical element for which scattering fails and which enjoys a compactness property. 
However our situation is different from the critical case dealt in \cite{kenig-merle2}; in our analysis the defocusing nature of the equation is taken in consideration. Finally, we show that the critical solution cannot
exist.  This will imply our result.  To complete our argument we need to establish a rigidity result. Due to the lack of useful virial estimates for gKdV we obtain a suitable version of the interaction Morawetz type estimates adapted to the gKdV equation. This approach is inspired in the works of Dodson \cite{dodson}, Colliander et al \cite{ckstt-nls}, Tao \cite{tao1} and references therein.  To obtain our rigidity theorem we combine
the monotonocity formula given in \cite{tao1}, the Kato smoothing effect to justify the regularity of the solutions involved in certain quantities and the persistence of regularity of the flow associated to solutions of the IVP \eqref{gkdv}.

The plan of this paper is as follows. In the next section we introduce some notation and estimates needed in the reminder of the paper.
In Section \ref{MofR}, we establish some perturbation results and a scattering criteria. Next, in Section \ref{sec4}, we show a profile decomposition theorem and, in Section \ref{sec5}, we construct a critical solution. Finally, Section \ref{sec6} is devoted to prove a rigidity theorem and to complete the proof of Theorem \ref{global4}.


\section{Notation and Preliminaries}\label{notation}

Let us start this section by introducing the notation employed throughout the paper. We use $c$ to denote various constants that may
vary line by line. Given any positive numbers $a$ and $b$, the notation $a \lesssim b$ means that there exists a positive constant
$c$ such that $a \leq c\,b$. We use $\|\cdot\|_{L^p}$ to denote the standard $L^p(\R)$ norm. If necessary, we use subscript to inform which variable we are concerned with.
For a given interval $I\subset \R$, the mixed norms $L^q_tL^r_x$ and $L^q_{I}L^r_x$  of $f=f(t,x)$ are defined, respectively, as
\begin{equation*}
\|f\|_{L^q_tL^r_x}= \left(\int_{-\infty}^{+\infty} \|f(t,\cdot)\|_{L^r_x}^q dt \right)^{1/q}\!\!\!\!, \quad \|f\|_{L^q_IL^r_x}= \left(\int_I \|f(t,\cdot)\|_{L^r_x}^q dt \right)^{1/q}
\end{equation*}
with the usual modifications whether $q =\infty$ or $r=\infty$. We use $L^p_{x,t}$ if $p=q$.  In a similar way we define the norm in the spaces  $L^r_xL^q_t$ and $L^r_xL^q_{I}$. If no confusion is caused, we use $\int fdx$ instead of $\int_{\R}f(x)dx$.

 We
shall  define $D^s_x$ and $J^s_x$ to be, respectively, the Fourier
multipliers with symbol $|\xi|^s$ and $\langle \xi \rangle^s = (1+|\xi|)^s$.
The norm in the Sobolev spaces $H^s=H^s(\R)$ and $\dot{H}^s=\dot{H}^s(\R)$
are given, respectively, by
\begin{equation*}
\|f\|_{H^s}\equiv \|J^sf\|_{L^2_x}=\|\langle \xi
\rangle^s\hat{f}\|_{L^2_{\xi}}, \qquad \|f\|_{\dot{H}^s}\equiv
\|D^sf\|_{L^2_x}=\||\xi|^s\widehat{f}\|_{L^2_{\xi}},
\end{equation*}
where $\widehat{f}$ stands for the usual Fourier transform of $f$.

Let us present now some useful lemmas and inequalities. We begin with the following oscillatory integral result concerning the evolution of the linear KdV equation.

\begin{lemma}\label{lemma0} 
For any $\theta \in [0,1]$ we have
\begin{equation*}
\|D^{\theta/2}_x U(t)u_0\|_{L^{2/(1-\theta)}_x}\lesssim |t|^{-\theta/2} \|u_0\|_{L^{2/(1+\theta)}_x}.
\end{equation*}
\end{lemma}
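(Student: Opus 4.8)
The plan is to establish this dispersive estimate by interpolating between two endpoint cases and exploiting the known pointwise structure of the linear KdV kernel. Recall that $U(t)u_0 = K_t \ast u_0$, where the kernel is given by the oscillatory integral
\begin{equation*}
K_t(x) = \frac{1}{2\pi}\int_{\R} e^{i(x\xi + t\xi^3)}\, d\xi = \frac{c}{t^{1/3}}\,\mathrm{Ai}\!\left(\frac{x}{t^{1/3}}\right),
\end{equation*}
involving the Airy function. The two endpoints I would treat are $\theta = 0$ and $\theta = 1$. At $\theta = 0$ the claimed inequality reads $\|U(t)u_0\|_{L^2_x}\lesssim \|u_0\|_{L^2_x}$, which is just the $L^2$-unitarity of the group $U(t)$ and is immediate from Plancherel. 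The genuine analytic content lives at the other endpoint $\theta = 1$, where the statement becomes
\begin{equation*}
\|D^{1/2}_x U(t)u_0\|_{L^{\infty}_x}\lesssim |t|^{-1/2}\|u_0\|_{L^1_x}.
\end{equation*}

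For this $L^1 \to L^\infty$ endpoint I would reduce matters to a uniform bound on the kernel of $D^{1/2}_x U(t)$. Writing $D^{1/2}_x U(t)u_0 = \tilde K_t \ast u_0$ with symbol $|\xi|^{1/2} e^{it\xi^3}$, Young's inequality gives $\|D^{1/2}_x U(t)u_0\|_{L^\infty_x}\le \|\tilde K_t\|_{L^\infty_x}\|u_0\|_{L^1_x}$, so it suffices to prove the pointwise decay $\|\tilde K_t\|_{L^\infty_x}\lesssim |t|^{-1/2}$. By the scaling $\xi \mapsto |t|^{-1/3}\xi$ one checks that $\tilde K_t(x)$ equals $|t|^{-1/2}$ times a fixed profile evaluated at the rescaled variable $x|t|^{-1/3}$; the factor $|t|^{-1/2}$ arises precisely from the combination of the $|t|^{-1/3}$ spatial scaling with the extra half-derivative $|\xi|^{1/2}$ (contributing $|t|^{-1/6}$). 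The remaining task is to show the fixed oscillatory integral $\int_{\R}|\xi|^{1/2} e^{i(y\xi + \xi^3)}\,d\xi$ is uniformly bounded in $y$, which follows from van der Corput's lemma: away from the critical points the phase $y\xi + \xi^3$ has derivative bounded below and one integrates by parts, while near the stationary points (where the second derivative $6\xi$ may degenerate at $\xi = 0$) one uses the first/second-derivative van der Corput estimates, the weight $|\xi|^{1/2}$ being harmless since it vanishes exactly where the degeneracy occurs.

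With the two endpoints $(\theta = 0$ and $\theta = 1)$ in hand, the general $\theta \in [0,1]$ case follows by complex interpolation (Stein's analytic interpolation theorem) applied to the analytic family of operators $T_z = D^{z/2}_x U(t)$ on the strip $\mathrm{Re}\,z \in [0,1]$; the Lebesgue exponents $2/(1-\theta)$ and $2/(1+\theta)$ are exactly the interpolants between $(L^2,L^2)$ and $(L^\infty, L^1)$, and the $|t|^{-\theta/2}$ decay interpolates between $|t|^0$ and $|t|^{-1/2}$. I expect the main obstacle to be the careful van der Corput analysis of the oscillatory kernel at the $\theta = 1$ endpoint, specifically handling the stationary-phase contribution near $\xi = 0$ where the weight $|\xi|^{1/2}$ meets the degeneracy of the cubic phase; this is the only step requiring real estimation, the rest being scaling and abstract interpolation. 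An alternative to complex interpolation, if one prefers to avoid the analytic family, is to establish the decay directly for each fixed $\theta$ by the same van der Corput argument applied to $\int |\xi|^{\theta/2} e^{i(y\xi+\xi^3)}\,d\xi$ together with a real-interpolation (Stein--Weiss) argument for the Lebesgue exponents.
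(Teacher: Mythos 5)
Your proof is correct and is essentially the paper's own: the paper gives no argument but simply cites \cite[Theorem 2.2]{KPV4}, whose proof is exactly your main route --- the endpoint kernel bound $\sup_{y}\big|\int_{\R}|\xi|^{1/2}e^{i(y\xi+\xi^{3})}\,d\xi\big|<\infty$ via van der Corput/stationary phase (the weight $|\xi|^{1/2}$ exactly balancing the curvature $|\phi''|=6|\xi|$ at the stationary points), combined with Stein's analytic interpolation for the family $T_{z}=D_{x}^{z/2}U(t)$, where the imaginary-order factors $|\xi|^{i\tau/2}$ on the boundary lines cause only admissible polynomial growth. The one thing to drop is your closing alternative: for fixed $\theta\in(0,1)$ the single operator $D_{x}^{\theta/2}U(t)$ has no second endpoint bound to interpolate against (it is unbounded on $L^{2}$), so a real-interpolation argument in the Lebesgue exponents alone cannot replace the analytic family in the derivative parameter.
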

\begin{proof}
See \cite[Theorem 2.2]{KPV4}.
\end{proof}

Next we recall the well known Strichartz estimates.

\begin{lemma}\label{lemma1} Let $p,q$, and $\alpha$ be such that
\begin{equation*}
-\alpha+\dfrac{1}{p}+\dfrac{3}{q}=\dfrac{1}{2}, \quad -\dfrac{1}{2}\leq\alpha\leq\dfrac{1}{q}.
\end{equation*}
Then
\begin{equation}\label{reg1}
 \|D^{\alpha}_xU(t)u_0\|_{L_t^qL_x^p}\lesssim\|u_0\|_{L^2_x}.
\end{equation}
Also, if
\begin{equation}\label{eq3}
\frac{1}{p}+\frac{1}{2q}=\frac{1}{4}, \quad \alpha=\frac{2}{q}-\frac{1}{p},
\quad 1\leq p,q\leq\infty, \quad -\frac{1}{4}\leq\alpha\leq1.
\end{equation}
Then,
\begin{equation}\label{eq4}
\|D^{\alpha}_x U(t)u_0\|_{L^{p}_xL^{q}_t}\lesssim
\|u_0\|_{L^2_x}.
\end{equation}
\end{lemma}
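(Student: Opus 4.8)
The plan is to derive both families of estimates from two elementary inputs: the conservation law $\|U(t)u_0\|_{L^2_x}=\|u_0\|_{L^2_x}$, so that $U(t)$ is unitary on $L^2$, and the dispersive decay of Lemma \ref{lemma0}. These will be combined through the $TT^*$ method, the Hardy--Littlewood--Sobolev inequality, and interpolation. I treat \eqref{reg1} and \eqref{eq4} separately, since they live on different scaling lines and differ in the order of the space--time norms.

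For \eqref{reg1} I would use the $TT^*$ method. Writing $Tu_0=D^{\alpha}_xU(t)u_0$, the estimate $\|Tu_0\|_{L^q_tL^p_x}\lesssim\|u_0\|_{L^2_x}$ is equivalent, by duality, to the boundedness of
\[
TT^*F(t)=\int_{\R} D^{2\alpha}_x U(t-s)F(s)\,ds\ :\ L^{q'}_tL^{p'}_x\longrightarrow L^q_tL^p_x.
\]
Next I would estimate the kernel pointwise in time by Lemma \ref{lemma0}: choosing $\theta=1-\tfrac2p$ gives $D^{2\alpha}_xU(t-s):L^{p'}_x\to L^p_x$ with operator norm $\lesssim|t-s|^{-\theta/2}$, provided the derivative count matches, $2\alpha=\theta/2$. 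Applying Minkowski's inequality to bring the $L^p_x$ norm inside and then Hardy--Littlewood--Sobolev in the $t$ variable reduces everything to the convolution bound $\||t|^{-\theta/2}*g\|_{L^q_t}\lesssim\|g\|_{L^{q'}_t}$, whose exponents are forced to match precisely by the scaling identity $-\alpha+\tfrac1p+\tfrac3q=\tfrac12$. This yields \eqref{reg1} at the admissible endpoint $\alpha=\tfrac1q$ (equivalently $\tfrac1p+\tfrac2q=\tfrac12$). The remaining exponents in the full range $-\tfrac12\le\alpha\le\tfrac1q$ along the scaling line are then obtained by interpolating this endpoint family with the trivial energy estimate $\|U(t)u_0\|_{L^\infty_tL^2_x}=\|u_0\|_{L^2_x}$ and the Sobolev-type endpoint corresponding to $p=q=\infty$.

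For \eqref{eq4} the space integration sits outside, so these are the local-smoothing Strichartz estimates, and I would anchor them on two endpoints and interpolate. The first is the sharp Kato smoothing estimate $\|D_xU(t)u_0\|_{L^\infty_xL^2_t}\lesssim\|u_0\|_{L^2_x}$, which I would prove directly: for fixed $x$ one applies Plancherel in $t$ to $U(t)u_0(x)=\int e^{i(x\xi+t\xi^3)}\widehat{u_0}(\xi)\,d\xi$ after the change of variable $\eta=\xi^3$, whose Jacobian exactly balances the factor $|\xi|$ coming from $D_x$ and thereby turns the bound into an $L^2_\xi$ identity uniform in $x$. The second is the maximal-function endpoint $\|D^{-1/4}_xU(t)u_0\|_{L^4_xL^\infty_t}\lesssim\|u_0\|_{L^2_x}$, the Carleson-type $L^4$ estimate for the KdV group. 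Interpolating between these two endpoints fills the whole admissible range $-\tfrac14\le\alpha\le1$ under the constraints $\tfrac1p+\tfrac1{2q}=\tfrac14$ and $\alpha=\tfrac2q-\tfrac1p$ prescribed in \eqref{eq3}.

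The main obstacle I anticipate is not the soft part but the genuine oscillatory-integral endpoints. In \eqref{reg1} the use of Hardy--Littlewood--Sobolev degenerates exactly at the $L^1_t$/$L^\infty_t$ limits, so the extremal admissible pairs must be reached by interpolation rather than directly; and in \eqref{eq4} the maximal-function endpoint $(p,q,\alpha)=(4,\infty,-\tfrac14)$ is a true local-smoothing estimate that does not follow from pointwise decay and must be established on its own. Throughout, the delicate bookkeeping is to keep the fractional derivative $D^{\alpha}_x$ consistent with the half-derivative gain built into Lemma \ref{lemma0}, so that the derivative count, the dispersive exponent $\theta$, and the Hardy--Littlewood--Sobolev exponents all close along the prescribed scaling line.
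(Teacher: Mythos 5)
The paper itself offers no argument for this lemma: its ``proof'' is a citation to \cite[Theorem 2.1]{KPV4} and \cite[Proposition 2.1]{KPV6}. So your proposal has to be measured against the standard arguments behind those citations. For \eqref{eq4} your route is that standard argument, and it is sound: the constraint set \eqref{eq3} is exactly the line segment in $(1/p,1/q,\alpha)$ joining the Kato smoothing point $(\infty,2,1)$ to the maximal-function point $(4,\infty,-1/4)$, with $\alpha$ affine along it; both anchors are genuine theorems (the first by the Plancherel/change-of-variables computation you describe, the second being Lemma \ref{kpvlemma} at $\alpha=1/4$), and the intermediate triples follow from the three-lines argument applied to the analytic family $D_x^{z}U(t)$ in the mixed norms, which is painless here because $D_x^{ib}$ is an $L^2$-isometry commuting with $U(t)$. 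Likewise, your $TT^*$/Hardy--Littlewood--Sobolev computation for \eqref{reg1} is correct as far as it goes: it produces precisely the one-parameter family $\alpha=1/q$, $1/p+2/q=1/2$, $4\le q\le\infty$.

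The genuine gap is in how you pass from that family to the full range $-\frac12\le\alpha\le\frac1q$ in \eqref{reg1}: neither of your two interpolation anchors does any work. The energy estimate $(p,q,\alpha)=(2,\infty,0)$ is itself the $q=\infty$ endpoint of the $TT^*$ family, so interpolating with it never leaves that segment. And the ``Sobolev-type endpoint $p=q=\infty$'', which the scaling relation forces to carry $\alpha=-\frac12$, is the assertion $\|D_x^{-1/2}U(t)u_0\|_{L^\infty_tL^\infty_x}\lesssim\|u_0\|_{L^2}$; since $U(t)$ is an $L^2$-isometry commuting with $D_x^{-1/2}$ (and $t\mapsto D_x^{-1/2}U(t)u_0$ is weak-$*$ continuous), this is equivalent to the endpoint Sobolev embedding $\dot{H}^{1/2}(\R)\hookrightarrow L^\infty(\R)$, which is \emph{false}: take $\widehat{u_0}(\xi)=|\xi|^{-1/2}(\log|\xi|)^{-1}$ for $|\xi|\ge2$ and $0$ otherwise. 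Consequently your scheme proves no case of \eqref{reg1} with $\alpha<1/q$; for instance it does not give $\|U(t)u_0\|_{L^8_{t}L^8_{x}}\lesssim\|u_0\|_{L^2}$ (the triple $\alpha=0$, $p=q=8$). The standard repair is not interpolation but Sobolev embedding in $x$ at fixed $q$: if $\alpha<1/q$ and $p<\infty$, then
\begin{equation*}
\|D^{\alpha}_xU(t)u_0\|_{L^q_tL^p_x}\lesssim\|D^{1/q}_xU(t)u_0\|_{L^q_tL^{p_0}_x},
\qquad \frac{1}{p_0}=\frac1p+\frac1q-\alpha=\frac12-\frac2q,
\end{equation*}
where the last equality uses the scaling relation, and the right-hand side is exactly the $TT^*$ case already proved; this settles every admissible triple with $p<\infty$. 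The remaining cases $p=\infty$, $q<\infty$ (so $\alpha=\frac3q-\frac12$) are not reachable by this embedding either; they require rerunning $TT^*$ with the kernel bounds $\|D_x^{\sigma}U(t)f\|_{L^\infty_x}\lesssim|t|^{-(1+\sigma)/3}\|f\|_{L^1_x}$, $-1<\sigma\le\frac12$, of which Lemma \ref{lemma0} records only the extremal member $\theta=1$, $\sigma=\frac12$. Finally, note that the corner $(p,q,\alpha)=(\infty,\infty,-\frac12)$, although formally permitted by the hypotheses as written, is precisely the false Sobolev endpoint above and must be excluded from the conclusion; your proposal stumbles exactly on that point.
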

\begin{proof}
See \cite[Theorem 2.1]{KPV4} and \cite[Proposition 2.1]{KPV6}.
\end{proof}

\begin{lemma}\label{kpvlemma}
Assume
\begin{equation}\label{eq1}
\frac{1}{4}\leq\alpha<\frac{1}{2}, \quad
\frac{1}{p}=\frac{1}{2}-\alpha.
\end{equation}
Then,
\begin{equation*}
\|D^{-\alpha} U(t)u_0\|_{L^{p}_xL^\infty_t}\lesssim
\|u_0\|_{L^2_x}.
\end{equation*}
Moreover, the smoothing effect of Kato type holds, i.e.
\begin{equation}\label{reg11}
 \|\partial_xU(t)u_0\|_{L_x^{\infty}L_t^2}\lesssim\|u_0\|_{L^2_x}.
\end{equation}
In particular, the dual version of \eqref{reg11} reads as follows:
\begin{equation*}
\|\partial_x \int_0^tU(t-t')g(t',\cdot)dt'\|_{L^{2}_x}\lesssim
\| g\|_{L^{1}_xL^{2}_t}.
\end{equation*}
\end{lemma}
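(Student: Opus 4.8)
The three inequalities are of genuinely different character, so the plan is to prove them in the order that exploits their dependencies: first the Kato smoothing estimate \eqref{reg11}, which is self-contained; then its dual by a duality argument; and finally the maximal-in-time estimate, which is the delicate oscillatory-integral bound and the main obstacle.

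\textbf{Kato smoothing \eqref{reg11}.} I fix $x\in\R$ and regard $t\mapsto \partial_x U(t)u_0(x)$ as a function of $t$ alone. Writing the group on the Fourier side,
\begin{equation*}
\partial_x U(t)u_0(x)=\int_{\R} i\xi\, e^{i(x\xi+t\xi^3)}\,\widehat{u_0}(\xi)\,d\xi,
\end{equation*}
I perform the change of variables $\eta=\xi^3$ (so $d\eta=3\xi^2\,d\xi$) separately on $\{\xi>0\}$ and $\{\xi<0\}$, so that, up to a constant, $\partial_x U(t)u_0(x)$ is the inverse Fourier transform in $t$ of $\eta\mapsto i\xi\, e^{ix\xi}\widehat{u_0}(\xi)/(3\xi^2)$ evaluated at $\xi=\sgn(\eta)|\eta|^{1/3}$. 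Plancherel in $t$ then gives
\begin{equation*}
\|\partial_x U(\cdot)u_0(x)\|_{L^2_t}^2=c\int_{\R}|\xi|^{2}\,\frac{1}{|\xi|^{2}}\,|\widehat{u_0}(\xi)|^2\,d\xi=c\,\|u_0\|_{L^2_x}^2,
\end{equation*}
where the factor $|\xi|^{2}$ from the derivative exactly cancels the net $|\xi|^{-2}$ coming from the Jacobian of $\eta=\xi^3$, in particular removing the singularity at $\xi=0$. The bound is independent of $x$, so taking the supremum over $x$ yields \eqref{reg11}; it is precisely this cancellation that produces the gain of one full derivative.

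\textbf{Dual estimate.} I first establish the companion full-line bound $\|\int_{\R}\partial_x U(-t')g(t',\cdot)\,dt'\|_{L^2_x}\lesssim \|g\|_{L^1_xL^2_t}$ by duality. For $h\in L^2_x$, moving the (skew-)adjoint operators onto $h$ and using that $U$ and $\partial_x$ commute gives
\begin{equation*}
\Bigl\langle \int_{\R}\partial_x U(-t')g(t')\,dt',\,h\Bigr\rangle=-\int_{\R}\int_{\R} g(t',x)\,\overline{\partial_x U(t')h(x)}\,dx\,dt',
\end{equation*}
and by Cauchy--Schwarz in $t'$ followed by \eqref{reg11},
\begin{equation*}
\Bigl|\Bigl\langle \int_{\R}\partial_x U(-t')g(t')\,dt',\,h\Bigr\rangle\Bigr|\le \|g\|_{L^1_xL^2_t}\,\|\partial_x U(\cdot)h\|_{L^\infty_xL^2_t}\lesssim \|g\|_{L^1_xL^2_t}\,\|h\|_{L^2_x}.
\end{equation*}
Taking the supremum over $\|h\|_{L^2_x}=1$ gives the full-line bound. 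The stated estimate then follows on writing $U(t-t')=U(t)U(-t')$, using that $U(t)$ is unitary on $L^2_x$ to discard it, and replacing $g$ by $g\,\mathbf 1_{[0,t]}$ so that the truncated integral $\int_0^t$ is controlled by the full-line one, with $\|g\,\mathbf 1_{[0,t]}\|_{L^1_xL^2_t}\le \|g\|_{L^1_xL^2_t}$; no Christ--Kiselev argument is needed since the target norm carries no time integration.

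\textbf{Maximal estimate.} The inequality $\|D^{-\alpha}U(t)u_0\|_{L^p_xL^\infty_t}\lesssim\|u_0\|_{L^2_x}$ under \eqref{eq1} is the sharp Kenig--Ponce--Vega maximal function estimate, and this is the step I expect to be the main obstacle. My plan is to treat the endpoint $\alpha=\tfrac14$, $p=4$ first via a $TT^*$ argument: the square of the operator norm is controlled by the boundedness $L^{4/3}_xL^1_t\to L^4_xL^\infty_t$ of the operator $SS^*$ with kernel $\int_{\R}|\xi|^{-1/2}e^{i[(x-y)\xi+(t-s)\xi^3]}\,d\xi$. Estimating this kernel rests on the oscillatory-integral analysis of the Airy phase $\xi\mapsto (x-y)\xi+(t-s)\xi^3$ (van der Corput and stationary-phase bounds), which yields the decay in $|x-y|$ needed to close a Hardy--Littlewood--Sobolev estimate in the space variable; the genuinely delicate point is that the $\sup_t$ in the $L^\infty_t$ norm must be absorbed uniformly in the time parameter $t-s$. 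The remaining range $\tfrac14<\alpha<\tfrac12$ then follows from the same oscillatory-integral analysis carried out with the general weight $|\xi|^{-2\alpha}$ together with the scaling relation $1/p=1/2-\alpha$. These oscillatory estimates are exactly the content of \cite[Theorem~2.2]{KPV4} and \cite[Proposition~2.1]{KPV6}, to which the computational details can be deferred.
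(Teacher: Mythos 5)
Your proposal is correct, but it is worth noting that the paper does not prove this lemma at all: it simply cites Kenig--Ponce--Vega (\cite[Lemma 3.29]{kpv1} for the maximal estimate and \cite[Theorem 3.5]{kpv1} for the smoothing effect). You instead give genuine self-contained proofs of two of the three assertions. Your Plancherel argument for \eqref{reg11} -- the change of variables $\eta=\xi^3$ turning $t\mapsto\partial_xU(t)u_0(x)$ into an inverse Fourier transform in $t$, with the Jacobian $|\xi|^{-2}$ exactly cancelling the symbol $|\xi|^2$ of $\partial_x^2$ -- is precisely the classical KPV proof, and in fact yields an identity $\|\partial_xU(\cdot)u_0(x)\|_{L^2_t}=c\|u_0\|_{L^2_x}$ for every $x$, not merely a bound. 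Your duality argument for the inhomogeneous estimate is also complete and correct: pairing with $h\in L^2_x$, using Cauchy--Schwarz in $t$ and \eqref{reg11}, gives the full-line bound, and your reduction of $\int_0^t$ to the full line via $U(t-t')=U(t)U(-t')$, unitarity of $U(t)$, and the truncation $g\mathbf 1_{[0,t]}$ is legitimate exactly for the reason you state -- the left-hand side carries no time norm, so no Christ--Kiselev lemma is required. For the maximal estimate you correctly identify it as the deep ingredient and give the standard $TT^*$ reduction to the uniform-in-$t$ kernel bound $\bigl|\int|\xi|^{-2\alpha}e^{i[(x-y)\xi+(t-s)\xi^3]}d\xi\bigr|\lesssim|x-y|^{2\alpha-1}$ followed by Hardy--Littlewood--Sobolev, deferring the van der Corput/stationary phase details to the literature; this is the same outsourcing the paper does for the entire lemma, so it is not a gap, though the precise pointer should be \cite[Lemma 3.29]{kpv1} (or the oscillatory-integral theorems of \cite{KPV4}) rather than \cite[Proposition 2.1]{KPV6}, which is cited in the paper for the $L^p_xL^q_t$ Strichartz family of Lemma \ref{lemma1}. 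The trade-off is the expected one: the paper's citation is brief, while your version makes the elementary parts transparent and isolates exactly where the genuinely hard oscillatory-integral input enters.
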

\begin{proof}
See \cite[Lemma 3.29]{kpv1} and \cite[Theorem 3.5]{kpv1}.
\end{proof}



We can also obtain the following particular cases of the Strichartz estimates in the critical
Sobolev space $\dot{H}^{s_k}(\R)$

\begin{corollary}
Let $k>4$ and $s_k=(k-4)/2k$. Then
\begin{equation}\label{STR}
 \|D^{2/3k}_xU(t)u_0\|_{L_{x,t}^{3k/2}}\lesssim\|D_x^{s_k}u_0\|_{L^2_x}
\end{equation}
and
\begin{equation}\label{STR2}
 \|D^{-1/k}_xU(t)u_0\|_{L^{k}_xL^{\infty}_t}\lesssim\|D_x^{s_k}u_0\|_{L^2_x}.
\end{equation}
\end{corollary}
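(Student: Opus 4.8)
The plan is to obtain both \eqref{STR} and \eqref{STR2} as immediate consequences of the Strichartz inequalities of Lemmas \ref{lemma1} and \ref{kpvlemma}, using only that $U(t)$ commutes with the Fourier multipliers $D_x^\theta$ and that such multipliers compose additively in their orders, $D_x^aD_x^b=D_x^{a+b}$. The key device is to write $u_0=D_x^{-s_k}\big(D_x^{s_k}u_0\big)$ and set $v_0:=D_x^{s_k}u_0$, so that $\|v_0\|_{L_x^2}=\|D_x^{s_k}u_0\|_{L_x^2}$ is precisely the right-hand side in each claim. Each estimate then reduces to a homogeneous Strichartz bound of the form $\|D_x^{\gamma}U(t)v_0\|_{Y}\lesssim\|v_0\|_{L_x^2}$ for a single exponent $\gamma$, and the only work is to check that this $\gamma$ together with the target space $Y$ forms an admissible pair for $k>4$.

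For \eqref{STR} I would take $Y=L_{x,t}^{3k/2}$, that is $p=q=3k/2$ in \eqref{reg1}. Recalling $s_k=\tfrac12-\tfrac2k$, the combined derivative is $D_x^{2/3k}D_x^{-s_k}=D_x^{\gamma}$ with $\gamma=\tfrac{2}{3k}-s_k=\tfrac{8}{3k}-\tfrac12$. One checks that $-\gamma+\tfrac1p+\tfrac3q=-\gamma+\tfrac{2}{3k}+\tfrac{6}{3k}=\tfrac12$, so the first admissibility relation of Lemma \ref{lemma1} holds, while the side condition $-\tfrac12\le\gamma\le\tfrac1q=\tfrac{2}{3k}$ reduces to $\tfrac2k\le\tfrac12$, i.e. $k\ge4$. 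Hence \eqref{reg1} applies to $v_0$ and gives \eqref{STR}.

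For \eqref{STR2} I would instead invoke the first inequality of Lemma \ref{kpvlemma}, which controls $\|D_x^{-\alpha}U(t)\cdot\|_{L_x^pL_t^\infty}$. Here the combined derivative is $D_x^{-1/k}D_x^{-s_k}=D_x^{-\alpha}$ with $\alpha=\tfrac1k+s_k=\tfrac12-\tfrac1k$, and the prescribed relation $\tfrac1p=\tfrac12-\alpha=\tfrac1k$ forces $p=k$, matching the target space $L_x^kL_t^\infty$. Since $k>4$ gives $\tfrac14<\alpha<\tfrac12$, the hypotheses \eqref{eq1} are satisfied, and Lemma \ref{kpvlemma} applied to $v_0$ yields \eqref{STR2}.

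Neither step presents a genuine analytic obstacle; the content is entirely in matching the exponents, and the admissibility relations in fact hold for all $k\ge4$, in particular for the even integers $k>4$ considered here. The only point deserving a word of care is the formal identity $u_0=D_x^{-s_k}v_0$, since $D_x^{-s_k}$ is a homogeneous multiplier singular at $\xi=0$; this is handled routinely on the Fourier side, the bounds in question being scale-invariant estimates that compose cleanly.
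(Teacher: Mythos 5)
Your proof is correct---both applications are valid exponent checks against the paper's stated linear estimates---but it diverges from the paper's own one-line proof in two respects worth recording. For \eqref{STR}, the paper combines Lemma \ref{lemma1} with Sobolev embedding in the space variable: one applies \eqref{reg1} at an admissible point $\alpha=1/q=2/(3k)$, $1/p_0=1/2-4/(3k)$, and then uses the homogeneous Sobolev embedding in $x$ (which trades $s_k$ derivatives for the jump in integrability from $p_0$ to $3k/2$). You instead invoke \eqref{reg1} directly at the sub-admissible point $p=q=3k/2$, $\gamma=\tfrac{8}{3k}-\tfrac12$, which is negative once $k\geq 6$; this is legitimate given the stated range $-\tfrac12\leq\alpha\leq\tfrac1q$, and the two arguments are in substance identical, since the portion of the family \eqref{reg1} lying below the line $\alpha=1/q$ is exactly what the Sobolev step produces from the admissible points. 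For \eqref{STR2}, your route is genuinely different from the paper's citation and is in fact the correct one: the paper asserts that \eqref{STR2} is ``a particular case of \eqref{eq4}'', but the constraints \eqref{eq3} with $q=\infty$ force $p=4$ and $\alpha=-1/4$, which cannot accommodate the needed $p=k>4$ and derivative index $\tfrac1k-\tfrac12$; the estimate is rather a particular case of the first inequality of Lemma \ref{kpvlemma}, with $\alpha=\tfrac12-\tfrac1k\in(\tfrac14,\tfrac12)$ and $\tfrac1p=\tfrac12-\alpha=\tfrac1k$, exactly as you apply it. So your write-up both proves the corollary and quietly repairs a small mis-citation in the paper's proof. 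One cosmetic point you could make explicit: \eqref{reg1} is stated in the $L^q_tL^p_x$ ordering while \eqref{STR} is an $L^{3k/2}_{x,t}$ norm, but since $p=q$ the two coincide by Fubini's theorem.
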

\begin{proof}
The estimate \eqref{STR} follows by Sobolev's embedding and Lemma
\ref{lemma1}. On the other hand, estimate \eqref{STR2} is a particular case
of \eqref{eq4}.
\end{proof}

Our next lemma provides another estimate for the linear evolution. It was used by Farah and Pastor \cite{FP} to give a new proof of the local well-posedness, in the critical Sobolev space $H^{s_k}(\R)$, for the gKdV equation \eqref{gkdv} with $k>4$.

\begin{lemma}\label{lemma12} Let $k>4$, $s_k=(k-4)/2k$.
Then
\begin{equation}\label{STR3}
 \|U(t)u_0\|_{L^{5k/4}_xL^{5k/2}_t}\lesssim\|D_x^{s_k}u_0\|_{L^2_x}.
\end{equation}
\end{lemma}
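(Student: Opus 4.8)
The plan is to deduce \eqref{STR3} from the Strichartz estimate \eqref{eq4} combined with a Sobolev embedding carried out in the spatial variable. The point is that the pair of exponents $(5k/4,5k/2)$ appearing in \eqref{STR3} is not itself admissible in \eqref{eq4}, but it becomes so once we trade $s_k/2$ spatial derivatives for integrability through a fractional Sobolev inequality.

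First I would fix the admissible triple in \eqref{eq4} whose time exponent matches the target. Choosing $q=5k/2$, the relation $\frac1p+\frac1{2q}=\frac14$ forces $\frac1p=\frac14-\frac1{5k}=\frac{5k-4}{20k}$, that is $p=\frac{20k}{5k-4}$, and then $\alpha=\frac2q-\frac1p=-\frac{k-4}{4k}=-\frac{s_k}{2}$. For $k>4$ one checks directly that $p\in(4,5)$, $q>10$ and $-\frac14<\alpha<0$, so the constraints $1\le p,q\le\infty$ and $-\frac14\le\alpha\le1$ hold and \eqref{eq4} gives, for every $w_0$,
\[
\|D_x^{-s_k/2}U(t)w_0\|_{L^{20k/(5k-4)}_xL^{5k/2}_t}\lesssim\|w_0\|_{L^2_x}.
\]
Applying this with $w_0=D_x^{s_k}u_0$ and using that the Fourier multiplier $D_x^{\sigma}$ commutes with $U(t)$, I obtain
\[
\|D_x^{s_k/2}U(t)u_0\|_{L^{20k/(5k-4)}_xL^{5k/2}_t}\lesssim\|D_x^{s_k}u_0\|_{L^2_x}.
\]

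It then remains to absorb the half derivative by a Sobolev inequality in $x$, namely
\[
\|f\|_{L^{5k/4}_xL^{5k/2}_t}\lesssim\|D_x^{s_k/2}f\|_{L^{20k/(5k-4)}_xL^{5k/2}_t},
\]
applied to $f=U(t)u_0$; combined with the previous display this yields \eqref{STR3}. To justify this mixed-norm embedding I would write $f=D_x^{-s_k/2}\bigl(D_x^{s_k/2}f\bigr)$ and represent $D_x^{-s_k/2}$ as convolution in $x$ with the one-dimensional Riesz kernel $c\,|x|^{s_k/2-1}$, which is legitimate because $0<s_k/2<\frac14$ for $k>4$. Minkowski's integral inequality then moves the $L^{5k/2}_t$ norm inside the spatial convolution, and the estimate reduces to the classical Hardy--Littlewood--Sobolev inequality $\|I_{s_k/2}G\|_{L^{5k/4}_x}\lesssim\|G\|_{L^{20k/(5k-4)}_x}$ with $G(x)=\|(D_x^{s_k/2}f)(x,\cdot)\|_{L^{5k/2}_t}$. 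The required gain of integrability is exactly the one dictated by the exponents, since $\frac{1}{5k/4}=\frac{5k-4}{20k}-\frac{s_k}{2}$, and the admissibility condition $1<p<r<\infty$ holds because $p=\frac{20k}{5k-4}\in(4,5)$ while $r=5k/4>5$.

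All the arithmetic above is elementary once set up, so the only genuine point requiring care is the mixed-norm Sobolev embedding: one must check that the order $s_k/2$ lies in $(0,1)$ so that the Riesz potential representation and Minkowski's inequality are available, and that the resulting exponents fall within the admissible range of Hardy--Littlewood--Sobolev. I expect this to be the main (though standard) obstacle; the choice of the Strichartz triple and the commutation of $D_x^{s_k}$ with $U(t)$ are routine.
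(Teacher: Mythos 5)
Your proof is correct, but it follows a genuinely different route from the paper's. The paper proves Lemma \ref{lemma12} in one line, by complex interpolation between the two critical-regularity estimates already recorded: \eqref{STR} and \eqref{STR2}. With interpolation weight $\theta=3/5$ the derivative orders cancel, $\tfrac35\cdot\tfrac{2}{3k}-\tfrac25\cdot\tfrac1k=0$, while the exponents combine to $\tfrac1p=\tfrac35\cdot\tfrac{2}{3k}+\tfrac25\cdot\tfrac1k=\tfrac{4}{5k}$ and $\tfrac1q=\tfrac35\cdot\tfrac{2}{3k}=\tfrac{2}{5k}$, i.e.\ exactly $(p,q)=(5k/4,5k/2)$. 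You instead use a single instance of the Kenig--Ponce--Vega estimate \eqref{eq4} at the admissible triple $(p,q,\alpha)=\bigl(\tfrac{20k}{5k-4},\tfrac{5k}{2},-\tfrac{s_k}{2}\bigr)$ (your arithmetic here is right, including the constraint checks $p\in(4,5)$, $-\tfrac14<\alpha<0$), and then trade the residual $s_k/2$ derivative for spatial integrability via a mixed-norm Sobolev embedding, justified by the Riesz-potential representation, Minkowski's integral inequality (legitimate precisely because the kernel $c|x|^{s_k/2-1}$ is nonnegative and $0<s_k/2<1$), and Hardy--Littlewood--Sobolev with the correctly verified scaling relation $\tfrac{4}{5k}=\tfrac{5k-4}{20k}-\tfrac{s_k}{2}$. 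The trade-off: the paper's argument is shorter because \eqref{STR} and \eqref{STR2} are already available (and note \eqref{STR} itself is obtained by Sobolev embedding plus Lemma \ref{lemma1}, so the derivative-for-integrability idea is present in the paper too, just folded into one of the interpolation endpoints); your argument needs only one Strichartz input plus classical HLS, avoids the maximal-function-type estimate \eqref{STR2} entirely, and makes the mixed-norm Sobolev step fully explicit, at the cost of the Riesz-kernel/Minkowski bookkeeping.
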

\begin{proof}
It is sufficient to interpolate inequalities \eqref{STR} and \eqref{STR2}.
\end{proof}

We also have the following integral estimates.
\begin{lemma}\label{lemma2}
Let $p_i,q_i$, $\alpha_i$, $i=1,2$, satisfying \eqref{eq3}. Then
$$
\|D_x^{\alpha_1} \int_0^tU(t-t')g(t',\cdot)dt'\|_{L^{p_1}_xL^{q_1}_t}\lesssim
\|D_x^{-\alpha_2} g\|_{L^{p_2'}_xL^{q_2'}_t},
$$
where $p_2'$ and $q_2'$ are the H\"older conjugate of $p_2$ and $q_2$ respectively.

Moreover, assume $(p_1,q_1,\alpha_1)$ satisfies \eqref{eq3} and
$(p_2, \alpha_2)$ satisfies \eqref{eq1}. Then,
\begin{equation*}
    \|D_x^{-\alpha_2}\int_0^tU(t-t')f(t',\cdot)dt'\|_{L^{p_2}_xL^\infty_t}\lesssim
    \|D_x^{-\alpha_1}f\|_{L^{p_1'}_xL^{q_1'}_t}.
\end{equation*}
\end{lemma}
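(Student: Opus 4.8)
The plan is to reduce both inequalities to the homogeneous linear estimates already at our disposal---namely \eqref{eq4} and the maximal-function bound of Lemma \ref{kpvlemma}---by a standard $TT^*$ (duality) argument, and then to pass from the full (non-retarded) space-time operator $\int_{\R} U(t-t')(\cdot)\,dt'$ to the retarded operator $\int_0^t U(t-t')(\cdot)\,dt'$ by means of the Christ--Kiselev lemma. The first ingredient is the dual form of \eqref{eq4}. Since $U(t)^*=U(-t)$ and the Fourier multiplier $D_x^{\alpha}$ is self-adjoint, the adjoint of the bounded map $L^2_x\ni u_0\mapsto D_x^{\alpha}U(t)u_0\in L^{p}_xL^{q}_t$ is $g\mapsto\int_{\R}U(-t')D_x^{\alpha}g(t',\cdot)\,dt'$; hence, for any triple $(p,q,\alpha)$ satisfying \eqref{eq3}, replacing $g$ by $D_x^{-\alpha}g$ gives
\begin{equation*}
\Big\|\int_{\R}U(-t')g(t',\cdot)\,dt'\Big\|_{L^2_x}\lesssim\|D_x^{-\alpha}g\|_{L^{p'}_xL^{q'}_t}.
\end{equation*}

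For the first estimate I would write $\int_{\R}U(t-t')g(t',\cdot)\,dt'=U(t)\,v_0$ with $v_0:=\int_{\R}U(-t')g(t',\cdot)\,dt'$, apply \eqref{eq4} with the exponents $(p_1,q_1,\alpha_1)$ to bound $\|D_x^{\alpha_1}U(t)v_0\|_{L^{p_1}_xL^{q_1}_t}\lesssim\|v_0\|_{L^2_x}$, and then control $\|v_0\|_{L^2_x}$ by the displayed dual estimate with exponents $(p_2,q_2,\alpha_2)$. This yields the non-retarded version
\begin{equation*}
\Big\|D_x^{\alpha_1}\int_{\R}U(t-t')g(t',\cdot)\,dt'\Big\|_{L^{p_1}_xL^{q_1}_t}\lesssim\|D_x^{-\alpha_2}g\|_{L^{p_2'}_xL^{q_2'}_t},
\end{equation*}
and the Christ--Kiselev lemma upgrades this to the stated estimate with $\int_0^t$ in place of $\int_{\R}$, the only requirement being the strict time-exponent inequality $q_2'<q_1$.

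The second estimate follows along the same lines, the only change being that the output homogeneous bound is now the maximal-function estimate of Lemma \ref{kpvlemma} rather than \eqref{eq4}. Writing again $\int_{\R}U(t-t')f(t',\cdot)\,dt'=U(t)\,w_0$ with $w_0:=\int_{\R}U(-t')f(t',\cdot)\,dt'$, I would use $\|D_x^{-\alpha_2}U(t)w_0\|_{L^{p_2}_xL^{\infty}_t}\lesssim\|w_0\|_{L^2_x}$ (valid since $(p_2,\alpha_2)$ satisfies \eqref{eq1}) together with the dual of \eqref{eq4} for the triple $(p_1,q_1,\alpha_1)$ to get $\|w_0\|_{L^2_x}\lesssim\|D_x^{-\alpha_1}f\|_{L^{p_1'}_xL^{q_1'}_t}$. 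Combining the two produces the non-retarded inequality, and Christ--Kiselev again gives the retarded one; here the output time exponent is $\infty$, so the strictness needed for Christ--Kiselev holds automatically.

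The step I expect to require the most care is the application of the Christ--Kiselev lemma in the mixed-norm $L^{p}_xL^{q}_t$ setting (time appearing in the inner norm) and the verification of the strict time-exponent inequality throughout the admissible range of \eqref{eq3}. Indeed, \eqref{eq3} forces $q_1,q_2\in[2,\infty]$, so that $1/q_1+1/q_2\leq 1$ with equality only at the single endpoint $q_1=q_2=2$ (the $L^{\infty}_xL^2_t$ Kato-smoothing corner); at that corner Christ--Kiselev is unavailable and the retarded estimate must be argued by hand, whereas for the second estimate this difficulty does not arise since the output time exponent is $\infty$ and \eqref{eq3} gives $q_1\ge 2>1$. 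As a robust alternative that bypasses Christ--Kiselev entirely, one could instead derive the retarded estimates directly from the pointwise dispersive decay of Lemma \ref{lemma0} through a Hardy--Littlewood--Sobolev fractional-integration argument in the time variable, at the cost of a more delicate bookkeeping of the spatial Lebesgue exponents.
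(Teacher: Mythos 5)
Your duality/$TT^*$ reduction is precisely the argument the paper intends: its proof of this lemma is the one-line instruction to use duality and $TT^*$ combined with Lemmas \ref{lemma1} and \ref{kpvlemma}, deferring all details to \cite[Proposition 2.2]{KPV6}. Your composition step $\int_{\R}U(t-t')g\,dt'=U(t)v_0$ with the dual bound $\|v_0\|_{L^2_x}\lesssim\|D_x^{-\alpha_2}g\|_{L^{p_2'}_xL^{q_2'}_t}$ is correct (and the pairing argument works even when $q_2=\infty$, since one only needs H\"older, not the full duality of the mixed-norm space). You also deserve credit for making explicit what the paper's citation hides: $TT^*$ alone produces the non-retarded operator, and passing to $\int_0^t$ requires a separate mechanism. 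Your Christ--Kiselev route does work, but be aware that with time as the \emph{inner} variable the lemma must be applied in a nonstandard form: a workable sufficient condition, obtained by checking the time-interval decomposition properties of both mixed norms via Minkowski's inequality in the outer $L^p_x$ norm, is $\max(p_2',q_2')<\min(p_1,q_1)$. Within the range \eqref{eq3} one always has $p_2'\le 4/3$ and $p_1\ge 4$, so this collapses to your condition $q_2'<q_1$, and your endpoint analysis is then exactly right: the argument fails only when $q_1=q_2=2$, i.e. when both triples equal the Kato-smoothing point $(\infty,2,1)$.

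That corner is the one genuine gap. The lemma as stated admits $(p_1,q_1,\alpha_1)=(p_2,q_2,\alpha_2)=(\infty,2,1)$, where the claim becomes the retarded double-smoothing estimate $\|\partial_x\int_0^tU(t-t')g\,dt'\|_{L^{\infty}_xL^{2}_t}\lesssim\|D_x^{-1}g\|_{L^{1}_xL^{2}_t}$; you correctly note that Christ--Kiselev is unavailable there, but you leave the case unproven, and your proposed fallback (the decay estimate of Lemma \ref{lemma0} plus Hardy--Littlewood--Sobolev in time) cannot recover it: the gain of a full derivative in \eqref{reg11} is a local smoothing phenomenon, not a consequence of dispersive decay, so no amount of fractional integration in $t$ will produce it. That endpoint estimate is true, but it requires the direct oscillatory-integral proof of Kenig--Ponce--Vega (\cite[Theorem 3.5]{kpv1}, \cite[Proposition 2.2]{KPV6}), which is exactly what the paper's citation supplies. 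The damage is confined, however: every application of this lemma in the paper pairs an output with $q_1\ge 10$ (namely $(p_1,q_1,\alpha_1)=(5,10,0)$ or an $L^{5k/4}_xL^{5k/2}_t$ output) against $(p_2,q_2,\alpha_2)=(\infty,2,1)$, so the double-endpoint configuration is never invoked, and away from it your proof is complete modulo writing down the mixed-norm Christ--Kiselev statement carefully.
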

\begin{proof}
Use	 duality and $TT^*$ arguments combined with Lemmas
\ref{lemma1} and \ref{kpvlemma} (see also \cite[Proposition 2.2]{KPV6}).
\end{proof}

Applying the same ideas as in the previous lemma together with Lemma \ref{lemma12} and \eqref{reg11} we also have.
\begin{corollary}
Let $k\geq4$ and $s_k=(k-4)/2k$, then the following estimate holds:
$$
\|\partial_x \int_0^tU(t-t')g(t',\cdot)dt'\|_{L^{5k/4}_xL^{5k/2}_t}\lesssim
\|D_x^{s_k} g\|_{L^{1}_xL^{2}_t}.
$$
\end{corollary}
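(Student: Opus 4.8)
The plan is to prove the final Corollary by combining the inhomogeneous estimate machinery of Lemma~\ref{lemma2} with the specific Strichartz pair from Lemma~\ref{lemma12} and the Kato smoothing effect \eqref{reg11}, exactly as the statement announces. The target estimate controls the Duhamel term in the $L^{5k/4}_xL^{5k/2}_t$ norm by $\|D_x^{s_k}g\|_{L^1_xL^2_t}$, so the natural approach is a $TT^*$/duality argument that splits the analysis: on the output side I want to land in the $L^{5k/4}_xL^{5k/2}_t$ norm with an $s_k$ derivative built into the linear propagator, and on the input side I want to pay for exactly $D_x^{s_k}g$ measured in the dual of the Kato-smoothing space.

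First I would write the Duhamel operator $\Phi g(t)=\int_0^t U(t-t')g(t',\cdot)\,dt'$ and recall that by the standard Christ--Kiselev or direct $TT^*$ reduction one may replace the retarded integral $\int_0^t$ by the full integral $\int_\R$ up to harmless constants, so that $\Phi$ factors as $B A^*$ where $A^* g=\int_\R U(-t')g(t')\,dt'$ maps into $L^2_x$ and $B h = U(t)h$. The point is then to estimate the two mapping properties separately. For the output factor I invoke Lemma~\ref{lemma12}: the homogeneous estimate \eqref{STR3} says precisely that $h\mapsto U(t)h$ sends $D_x^{s_k}h\in L^2_x$ into $L^{5k/4}_xL^{5k/2}_t$, i.e. $\|U(t)h\|_{L^{5k/4}_xL^{5k/2}_t}\lesssim \|D_x^{s_k}h\|_{L^2_x}$. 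For the input factor I need the adjoint of the Kato smoothing estimate \eqref{reg11}, whose dual form is already recorded at the end of Lemma~\ref{kpvlemma}, namely $\|\pa_x\int_0^t U(t-t')g\,dt'\|_{L^2_x}\lesssim \|g\|_{L^1_xL^2_t}$; applied with $D_x^{s_k}g$ in place of a single derivative's worth of smoothing, this produces the $\|D_x^{s_k}g\|_{L^1_xL^2_t}$ control on the right-hand side. Composing these two bounds through the $TT^*$ factorization yields exactly the claimed inequality.

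Concretely I would phase the derivatives so they balance: write $\pa_x = D_x^{1-s_k}D_x^{s_k}$ up to a bounded Fourier multiplier (or simply distribute the $s_k$ derivative into $U(t)$ via its commutation with the group), apply \eqref{STR3} to absorb the $s_k$ derivative on the homogeneous half, and apply the dual Kato estimate \eqref{reg11} to absorb the smoothing on the inhomogeneous half. Since $U(t)$ is a Fourier multiplier it commutes with $D_x^{s_k}$, so no commutator errors arise, and the indices $(5k/4,5k/2)$ together with the Kato exponents $(\infty,2)$ for the $x$- and $t$-variables respectively are exactly Hölder/duality-conjugate in the way Lemma~\ref{lemma2} requires. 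This is why the statement advertises that it follows ``applying the same ideas as in the previous lemma together with Lemma~\ref{lemma12} and \eqref{reg11}.''

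The main obstacle I anticipate is purely bookkeeping rather than conceptual: one must verify that the exponent triple $(p_1,q_1,\alpha_1)=(5k/4,5k/2,s_k)$ is genuinely admissible in the sense of \eqref{eq3} (checking $\tfrac1{p_1}+\tfrac1{2q_1}=\tfrac14$ and the derivative relation), so that Lemma~\ref{lemma12}'s homogeneous estimate can legitimately be paired with the Kato-smoothing input space under the $TT^*$ duality, and that the retarded-to-full-integral reduction (Christ--Kiselev) is available for these Lebesgue exponents, which requires the time-integrability exponent on the output to strictly exceed that on the input---here $5k/2>2$, which holds for all $k>4$. Once these admissibility and Christ--Kiselev conditions are confirmed, the composition of \eqref{STR3} and the dual form of \eqref{reg11} is immediate and the proof is complete.
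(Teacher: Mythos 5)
Your core factorization is, in substance, the paper's own argument: the paper's entire proof of this corollary is the remark that one applies ``the same ideas as in the previous lemma'' (duality and $TT^*$) together with Lemma \ref{lemma12} and \eqref{reg11}, and the bookkeeping in your second and third paragraphs carries this out correctly. Since $D^{s_k}_x$ and $\partial_x$ commute with $U(t)$, the untruncated Duhamel term factors as $U(t)$ applied to $\partial_x\int_{\R}U(-t')g(t')\,dt'$; the outer factor is estimated in $L^{5k/4}_xL^{5k/2}_t$ by \eqref{STR3} at the cost of $D^{s_k}_x$, and the full derivative $\partial_x$ is then absorbed by the dual of the Kato smoothing estimate \eqref{reg11} applied to $D^{s_k}_xg$, producing exactly $\|D^{s_k}_xg\|_{L^1_xL^2_t}$.

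The genuine problem is your final paragraph, where you make completion of the proof contingent on verifying that $(p_1,q_1,\alpha_1)=(5k/4,\,5k/2,\,s_k)$ is admissible in the sense of \eqref{eq3}. That verification fails throughout the supercritical range: one computes $\frac{1}{p_1}+\frac{1}{2q_1}=\frac{4}{5k}+\frac{1}{5k}=\frac{1}{k}$, which equals $\frac14$ only when $k=4$, and likewise $\frac{2}{q_1}-\frac{1}{p_1}=0\neq s_k$ for every $k>4$. This is structural, not a slip: the pair $(5k/4,5k/2)$ lies outside the family \eqref{eq3}, which is precisely why the paper obtains \eqref{STR3} by interpolating \eqref{STR} and \eqref{STR2} (Lemma \ref{lemma12}) rather than quoting \eqref{eq4}, and why the present statement is a separate corollary rather than an instance of Lemma \ref{lemma2}, whose hypotheses require \emph{both} triples to satisfy \eqref{eq3} (only the Kato triple $(\infty,2,1)$ does). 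Had you actually performed the check you prescribe, you would have wrongly concluded that the argument collapses. The repair is simply to delete the requirement: the $TT^*$ composition needs no admissibility of the output pair, only the two homogeneous bounds \eqref{STR3} and \eqref{reg11} (with its dual), exactly as in your earlier paragraphs. A residual caveat, which the paper also leaves implicit by citing \cite{KPV6}: the Christ--Kiselev lemma you invoke to pass from $\int_{\R}$ to $\int_0^t$ is usually stated for norms with time as the outermost variable, so for the space-outer norms $L^{p}_xL^{q}_t$ used here one needs the variant adapted to mixed norms; your condition $5k/2>2$ is the relevant one, but it enters through that variant rather than through the standard formulation.
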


\begin{remark}\label{infinity}
In the above result,  one can replace the integral $\int_0^t$ by $\int_t^\infty$. This kind of
estimate will be used in Section \ref{MofR}.
\end{remark}

Finally, we recall the fractional Leibnitz and chain rules.
\begin{lemma}\label{lemma6}
Let $0<\alpha<1$ and $p,p_1,p_2,q,q_1,q_2 \in (1,\infty)$ with
$\frac{1}{p}=\frac{1}{p_1}+\frac{1}{p_2}$ and
$\frac{1}{q}=\frac{1}{q_1}+\frac{1}{q_2}$. Then,
\begin{itemize}
    \item[(i)] $$
\|D^\alpha_x(fg)-fD^\alpha_xg-gD^\alpha_xf\|_{L^p_xL^q_t}\lesssim
\|D^\alpha_xf\|_{L^{p_1}_xL^{q_1}_t}\|g\|_{L^{p_2}_xL^{q_2}_t}.
$$
The same still holds if $p=1$ and $q=2$.
    \item[(ii)] $$\|D^\alpha_xF(f)\|_{L^p_xL^q_t}\lesssim
    \|D^\alpha_xf\|_{L^{p_1}_xL^{q_1}_t}\|F'(f)\|_{L^{p_2}_xL^{q_2}_t}.$$
  \end{itemize}
\end{lemma}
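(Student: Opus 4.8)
Both inequalities are the fractional Leibniz and chain rules of Kenig, Ponce and Vega, and the plan is to reduce each of them to the Coifman--Meyer bilinear multiplier theorem after a Littlewood--Paley decomposition. For part (i), I would first pass to the Fourier side and rewrite the left-hand side as a single bilinear operator
\begin{equation*}
D^\alpha_x(fg)-fD^\alpha_xg-gD^\alpha_xf = T_\sigma(f,g), \qquad \widehat{T_\sigma(f,g)}(\xi)=\int_\R \sigma(\xi,\eta)\,\hat f(\xi-\eta)\,\hat g(\eta)\,d\eta,
\end{equation*}
whose symbol is $\sigma(\xi,\eta)=|\xi|^\alpha-|\eta|^\alpha-|\xi-\eta|^\alpha$. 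The commutator structure is designed precisely so that $\sigma$ is controlled by the frequency of $f$ alone: a short case analysis, distinguishing $|\xi-\eta|\geq|\eta|$ from $|\xi-\eta|<|\eta|$ and using the mean value theorem together with $0<\alpha<1$ in the latter regime, gives the pointwise bound $|\sigma(\xi,\eta)|\lesssim|\xi-\eta|^\alpha$. Hence $\sigma(\xi,\eta)=|\xi-\eta|^\alpha\,\tilde\sigma(\xi,\eta)$ with $\tilde\sigma$ homogeneous of degree zero and bounded, and $T_\sigma(f,g)=T_{\tilde\sigma}(D^\alpha_x f,g)$.

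The remaining work is to promote this pointwise bound to the symbol regularity that Coifman--Meyer requires, which is delicate only because $\tilde\sigma$ is singular on the sets $\xi=0$, $\eta=0$ and $\xi=\eta$. I would therefore decompose $f=\sum_j\Delta_jf$ and $g=\sum_k\Delta_kg$ into dyadic frequency blocks and split $T_\sigma$ into its low-high, high-low and high-high interactions; on each block $|\xi|$, $|\eta|$ and $|\xi-\eta|$ are comparable to fixed dyadic scales and stay away from the singular sets, so the rescaled symbol satisfies the Hörmander--Mikhlin estimates $|\partial_\xi^a\partial_\eta^b\tilde\sigma|\lesssim(|\xi|+|\eta|)^{-a-b}$. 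The Coifman--Meyer theorem then yields the spatial estimate
\begin{equation*}
\|T_{\tilde\sigma}(D^\alpha_x f,g)\|_{L^p_x}\lesssim\|D^\alpha_x f\|_{L^{p_1}_x}\|g\|_{L^{p_2}_x},
\end{equation*}
which is (i) in the $x$ variable alone. To incorporate the time variable I would run the same argument with $L^q_t$, $L^{q_1}_t$, $L^{q_2}_t$ as the Banach value spaces, invoking the vector-valued (UMD) extension of Coifman--Meyer, or equivalently freezing $t$ and closing with Hölder in $t$ and Minkowski's inequality.

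For the chain rule (ii) I would use the pointwise difference formula $D^\alpha_x h(x)=c_\alpha\int_\R \big(h(x)-h(y)\big)|x-y|^{-1-\alpha}\,dy$, valid for $0<\alpha<1$, applied to $h=F(f)$; the mean value identity $F(f(x))-F(f(y))=F'(\zeta)\big(f(x)-f(y)\big)$ then bounds $|D^\alpha_x F(f)(x)|$ by a product of $|F'(f)|$ and difference quotients of $f$, which after Hölder and the boundedness of the relevant maximal operators reproduces $\|D^\alpha_x f\|_{L^{p_1}_xL^{q_1}_t}\|F'(f)\|_{L^{p_2}_xL^{q_2}_t}$; alternatively the same bound follows from the paraproduct linearization $\Delta_jF(f)\approx F'(S_jf)\Delta_j f$ summed against the weights $2^{\alpha j}$. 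The main obstacle is not the pointwise symbol bound, which is elementary, but two nonstandard features: upgrading the scalar Coifman--Meyer bounds to the mixed space-time norms $L^p_xL^q_t$ uniformly, and handling the borderline case $p=1$, $q=2$ in (i), which lies at the boundary of the Coifman--Meyer range and must be treated by a direct kernel (or Hardy space $H^1$) estimate rather than by the generic multiplier theorem.
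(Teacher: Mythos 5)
The paper does not actually prove this lemma: it is quoted from Kenig--Ponce--Vega \cite[Theorems A.6, A.8, and A.13]{kpv1}, whose proofs run Littlewood--Paley theory together with \emph{vector-valued} (Fefferman--Stein type) maximal and square-function inequalities directly in the mixed norms. Your Coifman--Meyer skeleton is a legitimate modern alternative in outline, and your pointwise bound $|\sigma(\xi,\eta)|\lesssim|\xi-\eta|^{\alpha}$ is correct (it already follows from subadditivity of $s\mapsto s^{\alpha}$), but two of your steps fail as stated, and they are exactly the points where the real work lies. The most serious is the mixed norm. Your claim that the vector-valued extension is ``equivalent'' to freezing $t$, applying H\"older in $t$, and using Minkowski is false and cannot be repaired: the operator acts in $x$, while $x$ is the \emph{outer} variable of every norm in the statement. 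Freezing $t$ controls $\|T(f,g)\|_{L^q_tL^p_x}$; passing to $\|T(f,g)\|_{L^p_xL^q_t}$ by Minkowski requires $q\le p$, while converting the data from $L^{q_i}_tL^{p_i}_x$ back to the hypotheses $L^{p_i}_xL^{q_i}_t$ requires $p_i\le q_i$; combined with $1/p=1/p_1+1/p_2$ and $1/q=1/q_1+1/q_2$, this forces $p=q$ and $p_i=q_i$, i.e.\ the shortcut recovers only the unmixed case $L^p_{x,t}$. Every application in this paper is genuinely mixed (left-hand side in $L^1_xL^2_t$, right-hand sides in $L^5_xL^{10}_t$ and $L^{5k/4}_xL^{5k/2}_t$, where in particular $q=2>p=1$), so if you keep the multiplier route you must actually prove an $L^q_t$-valued (lattice-valued) bilinear multiplier theorem, e.g.\ via weighted estimates and extrapolation; that is the heart of the matter, not a H\"older--Minkowski remark.

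The second gap is the symbol regularity. After division, $\tilde\sigma=\sigma/|\xi-\eta|^{\alpha}$ is bounded and homogeneous of degree zero, but it does \emph{not} satisfy the Coifman--Meyer bounds $|\partial_\xi^a\partial_\eta^b\tilde\sigma|\lesssim(|\xi|+|\eta|)^{-a-b}$, not even restricted to a single low-high block: on $|\xi-\eta|\sim 2^{j}\ll 2^{k}\sim|\eta|$ one computes $|\partial_\xi\tilde\sigma|\sim 2^{k(\alpha-1)}2^{-j\alpha}=2^{-(k-j)(1-\alpha)}2^{-j}$, which exceeds the required $(|\xi|+|\eta|)^{-1}\sim 2^{-k}$ by the factor $2^{(k-j)\alpha}$. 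What is true, and what a correct proof exploits, is that each low-high block operator carries the smallness $2^{-(k-j)(1-\alpha)}$ --- precisely where $\alpha<1$ and the subtraction of $gD^\alpha_x f$ enter --- and one must then sum the paraproduct pieces, using $\alpha>0$ for the high-high part, where the output frequency ranges down to $0$ and hence never ``stays away from the singular set.'' So the argument is a gain-and-sum paraproduct estimate, not a single application of a Mikhlin-type bilinear theorem to frequency-localized pieces. A smaller issue of the same kind affects (ii): the mean value theorem produces $F'(\zeta)$ with $\zeta$ between $f(x)$ and $f(y)$, and you may dominate it by $|F'(f(x))|+|F'(f(y))|$ only because $F'$ is a power here; moreover the resulting maximal-function bounds must again be vector-valued to yield $L^{p}_xL^{q}_t$. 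None of this invalidates the strategy, but filling these gaps amounts to reproving the cited KPV appendix theorems rather than invoking off-the-shelf Coifman--Meyer.
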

\begin{proof}
See  \cite[Theorems A.6, A.8, and A.13]{kpv1}.
\end{proof}

\section{Sufficient conditions for scattering and perturbation theory}\label{MofR}

In order to prove our main result, in this section we establish some useful results. 
We start  with the small data theory for the gKdV equation \eqref{gkdv}.

\begin{proposition}[Small data theory]
\label{SDGT}
Let $k \geq 4,\ s_{k} = (k-4)/2k,\ u_{0} \in \dot{H}^{s_{k}}(\R)$ with $\| u_{0} \|_{\dot{H}^{s_{k}}} \leq K$, and $t_{0} \in I$, a time interval. There exists $\delta = \delta(K) > 0$ such that if
\begin{equation*}
\| U(t-t_{0}) u_{0} \|_{L^{5k/4}_{x} L^{5k/2}_{I}} < \delta,
\end{equation*}
there exists a unique solution $u$ of the integral equation
\begin{equation} \label{IntEq}
u(t) = U(t-t_{0}) u_{0} + \int_{t_{0}}^{t} U(t -t') \partial_{x} (u^{k+1})(t') dt'
\end{equation}
in $I \times \R$ with $u \in C(I; \dot{H}^{s_{k}}(\R))$ satisfying
\begin{equation*}
\| u \|_{L^{5k/4}_{x} L^{5k/2}_{I}} \leq 2\delta \qquad \text{and} \qquad \| u \|_{L^{\infty}_{I} \dot{H}^{s_{k}}_{x}} + \| D^{s_{k}}_{x} u \|_{L^{5}_{x} L^{10}_{I}} < 2cK,
\end{equation*}
for some positive constant $c$.
\end{proposition}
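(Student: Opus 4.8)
The plan is to solve the integral equation \eqref{IntEq} by a contraction mapping argument in a complete metric space adapted to the three norms in the statement. Writing $s_k=(k-4)/2k$, introduce the map
\[
\Phi(u)(t)=U(t-t_{0})u_{0}+\int_{t_{0}}^{t}U(t-t')\partial_{x}(u^{k+1})(t')\,dt',
\]
the ball
\[
\mathcal{B}=\Big\{u:\ \|u\|_{L^{5k/4}_{x}L^{5k/2}_{I}}\le 2\delta,\ \ \|u\|_{L^{\infty}_{I}\dot{H}^{s_k}_{x}}+\|D^{s_k}_{x}u\|_{L^{5}_{x}L^{10}_{I}}\le 2cK\Big\},
\]
and the metric $d(u,v)=\|u-v\|_{L^{5k/4}_{x}L^{5k/2}_{I}}+\|D^{s_k}_{x}(u-v)\|_{L^{5}_{x}L^{10}_{I}}$, for which $\mathcal{B}$ is complete. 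First I would record the linear bounds for the free term: since $U(t)$ is unitary on $\dot{H}^{s_k}$ one has $\|U(t-t_0)u_0\|_{L^{\infty}_{I}\dot{H}^{s_k}_{x}}=\|u_0\|_{\dot{H}^{s_k}}\le K$, while the Strichartz estimate \eqref{eq4} with $(p,q,\alpha)=(5,10,0)$ applied to $D^{s_k}_{x}u_0$ gives $\|D^{s_k}_{x}U(t-t_0)u_0\|_{L^{5}_{x}L^{10}_{I}}\lesssim K$; the smallness hypothesis furnishes $\|U(t-t_0)u_0\|_{L^{5k/4}_{x}L^{5k/2}_{I}}<\delta$. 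I fix $c$ to be a constant, depending only on the implicit Strichartz/energy constants, for which the sum of the first two free-term norms is $\le cK$.

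The heart of the argument is a single nonlinear estimate: each of the three norms of the Duhamel term is controlled by $\|D^{s_k}_{x}(u^{k+1})\|_{L^{1}_{x}L^{2}_{I}}$. The $L^{5k/4}_{x}L^{5k/2}_{I}$ norm is handled directly by the corollary following Lemma \ref{lemma2}, which absorbs the $\partial_x$ and outputs precisely $\|D^{s_k}_{x}(u^{k+1})\|_{L^{1}_{x}L^{2}_{I}}$. For the $L^{5}_{x}L^{10}_{I}$ norm one combines, through a $TT^{*}$ argument, the Strichartz estimate \eqref{eq4} at $(5,10,0)$ with the dual of the Kato smoothing estimate \eqref{reg11}; after commuting $D^{s_k}_{x}$ through $U(t)$, the derivative coming from the nonlinearity is absorbed by the smoothing factor and the same bound results. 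The energy norm $\|\cdot\|_{L^{\infty}_{I}\dot{H}^{s_k}_{x}}$ is controlled by the dual smoothing estimate of Lemma \ref{kpvlemma} with $D^{s_k}_{x}$ inserted. Thus all three reduce to estimating $\|D^{s_k}_{x}(u^{k+1})\|_{L^{1}_{x}L^{2}_{I}}$, and here I invoke the fractional Leibniz/chain rule of Lemma \ref{lemma6} at the endpoint $(p,q)=(1,2)$: assigning the exponents $p_1=5,\,q_1=10$ to the factor carrying the derivative and $kp_2=5k/4,\,kq_2=5k/2$ to the remaining $u^{k}$ (so that $\tfrac1{p_1}+\tfrac1{p_2}=1$ and $\tfrac1{q_1}+\tfrac1{q_2}=\tfrac12$) yields
\[
\|D^{s_k}_{x}(u^{k+1})\|_{L^{1}_{x}L^{2}_{I}}\lesssim \|D^{s_k}_{x}u\|_{L^{5}_{x}L^{10}_{I}}\,\|u\|^{k}_{L^{5k/4}_{x}L^{5k/2}_{I}}.
\]

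Combining the linear and nonlinear bounds, for $u\in\mathcal{B}$ one obtains $\|\Phi(u)\|_{L^{5k/4}_{x}L^{5k/2}_{I}}\le\delta+C(2cK)(2\delta)^{k}$ and $\|\Phi(u)\|_{L^{\infty}_{I}\dot{H}^{s_k}_{x}}+\|D^{s_k}_{x}\Phi(u)\|_{L^{5}_{x}L^{10}_{I}}\le cK+C(2cK)(2\delta)^{k}$. Choosing $\delta=\delta(K)$ so small that $C(2cK)(2\delta)^{k}\le\min\{\delta,cK\}$ shows $\Phi(\mathcal{B})\subset\mathcal{B}$. For the contraction I would estimate $\Phi(u)-\Phi(v)$ in the metric $d$ using the factorization $u^{k+1}-v^{k+1}=(u-v)\sum_{j=0}^{k}u^{j}v^{k-j}$ together with the same multilinear estimates; since the nonlinearity is of degree $k+1$, the difference picks up $k$ factors of the small norms and hence a factor $O(\delta^{k})<1$. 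The resulting fixed point is the desired unique solution, and continuity $u\in C(I;\dot{H}^{s_k})$ follows from \eqref{IntEq} and the above bounds. The main obstacle is exactly this nonlinear step: one must recover the derivative in $\partial_x(u^{k+1})$ via the Kato smoothing effect while placing the nonlinearity in the anisotropic dual space $L^{1}_{x}L^{2}_{I}$, and justify the fractional chain rule at the delicate endpoint $p=1,\,q=2$—the bookkeeping that makes the four mixed norms match up is the least routine part of the argument.
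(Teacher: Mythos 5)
Your proof is correct and takes essentially the same route as the paper's: the paper defers to Farah--Pigott \cite{FP16} (and Farah--Pastor \cite{FP}) and to the proof of Proposition \ref{STPT}, which runs exactly this contraction scheme --- the Strichartz estimate \eqref{eq4} at $(5,10,0)$, the Kato-smoothing duals encoded in Lemmas \ref{kpvlemma} and \ref{lemma2} with $(p_2,q_2,\alpha_2)=(\infty,2,1)$, and the fractional Leibniz rule (Lemma \ref{lemma6}) at the endpoint $L^1_xL^2_t$, all in the ball defined by the $L^{5k/4}_xL^{5k/2}_I$, $L^\infty_I\dot H^{s_k}_x$ and $L^5_xL^{10}_I$ norms. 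One minor bookkeeping slip: in the difference estimate, when $D^{s_k}_x$ lands on a factor other than $u-v$ the contraction constant is $O(K\delta^{k-1})$ rather than $O(\delta^{k})$, which is harmless since $\delta$ is allowed to depend on $K$.
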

\begin{proof}
The proof can be found in Farah and Pigott \cite[Theorem 3.6]{FP16} (see also Farah and Pastor \cite[Theorem 1.2]{FP} and the proof of Proposition \ref{STPT} below).
\end{proof}

\begin{remark}\label{Rema-SDT}
Let $u_0\in {H}^{1}(\mathbb{R})$ and let $u(t)$ be the corresponding global solution satisfying $$
\displaystyle \sup_{t\in \R}\|u(t)\|_{H^1}:=K<\infty,
$$ 
by Theorem \ref{global5}. We claim that the small data theory (Proposition \ref{SDGT}) implies, for every $a<b$, that
$$
\| u \|_{L^{5k/4}_{x} L^{5k/2}_{[a,b]}} <\infty.
$$
Indeed, let $s\in [a,b]$. The Strichartz estimate \eqref{STR3} implies
$$
\| U(t-s) u(s) \|_{L^{5k/4}_{x} L^{5k/2}_{t}} \leq \|u(s)\|_{H^1}\leq K.
$$
Therefore, there exists and open interval $I_s$, containing $s$, such that
$$
\| U(t-s) u(s) \|_{L^{5k/4}_{x} L^{5k/2}_{I_s}} < \delta(K),
$$
where $\delta(K)$ is given by Proposition \ref{SDGT}. So,
$$
\| u \|_{L^{5k/4}_{x} L^{5k/2}_{I_s}} < 2 \delta(K).
$$
Since $ [a,b]\subset \bigcup\limits_{s\in [a,b]} I_s$ we can find $\{s_1,\cdots, s_l\}\subset [a,b]$ such that $ [a,b]\subset \bigcup\limits_{j=1}^{l} I_{s_j}$. Finally,
$$
\| u \|_{L^{5k/4}_{x} L^{5k/2}_{[a,b]}} \leq \sum_{j=1}^l \| u \|_{L^{5k/4}_{x} L^{5k/2}_{I_{s_j}}}\leq 2l\delta(K)<\infty.
$$

\end{remark}

Next result establishes a criterion for an $H^1$ solution to scatter. It says that as long as a global uniformly bounded solution satisfies $\|u\|_{L^{5k/4}_xL^{5k/2}_t}<\infty$ it scatters in both directions.

\begin{proposition}[$H^1$-scattering]\label{PROPSCAT} If $u_0\in H^1(\R)$, $u(t)$ is a global solution of the integral equation \eqref{IntEq}, with $t_0=0$, such that $\sup_{t\in \R}\|u(t)\|_{H^1}<\infty$ and $\|u\|_{L^{5k/4}_xL^{5k/2}_{[0,+\infty)}}<\infty$, then there exists $\phi^{+}\in H^1$ such that
\begin{equation}\label{SCATT}
\lim_{t\rightarrow +\infty} \|u(t)-U(t)\phi^+\|_{H^1}=0.
\end{equation}
Also, if $\|u\|_{L^{5k/4}_xL^{5k/2}_{(-\infty,0]}}<\infty$ then there exists $\phi^{-}\in H^1$ such that
\begin{equation*}
\lim_{t\rightarrow -\infty} \|u(t)-U(t)\phi^-\|_{H^1}=0.
\end{equation*}
\end{proposition}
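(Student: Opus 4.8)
The plan is to prove scattering by the standard two-step strategy: first upgrade the finite Strichartz norm on $[0,+\infty)$ to finiteness of all the relevant auxiliary norms appearing in the contraction estimates, and then show that the Duhamel tail $\int_t^{\infty} U(t-t')\partial_x(u^{k+1})(t')\,dt'$ is Cauchy, which forces convergence to a free solution. I will treat only the forward-in-time statement \eqref{SCATT}, since the backward statement is identical after the time reversal $t\mapsto -t$.

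\textbf{Step 1: Finiteness of auxiliary norms.} Fix $K:=\sup_{t\in\R}\|u(t)\|_{H^1}<\infty$. Since $\|u\|_{L^{5k/4}_xL^{5k/2}_{[0,+\infty)}}<\infty$, I partition $[0,+\infty)$ into finitely many subintervals $I_1,\dots,I_N$ on each of which this norm is smaller than the small-data threshold $\delta(K)$ from Proposition \ref{SDGT}. On each such $I_j=[a_j,b_j]$ I apply the integral equation \eqref{IntEq} with initial time $a_j$ together with the Strichartz-type estimates of Lemmas \ref{lemma1}, \ref{lemma2} and the corollary preceding Remark \ref{infinity}, the fractional Leibniz and chain rules of Lemma \ref{lemma6}, and the smoothing estimate \eqref{reg11}, to control the remaining spaces such as $\|D^{s_k}_xu\|_{L^5_xL^{10}_{I_j}}$ and $\|\partial_xu\|$ in the appropriate smoothing norms. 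Because $H^1\hookrightarrow \dot H^{s_k}$ and the number of intervals is finite, summing over $j$ yields that all the auxiliary Strichartz and smoothing norms of $u$ are finite on all of $[0,+\infty)$; in particular the nonlinear term $\partial_x(u^{k+1})$ lies in the dual space $L^1_xL^2_{[0,+\infty)}$ (after applying $D^{s_k}_x$) and in the other dual spaces dictated by Lemma \ref{lemma2}.

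\textbf{Step 2: Construction of the scattering state.} I define the candidate asymptotic profile by $\phi^+:=u_0+\int_0^{\infty}U(-t')\partial_x(u^{k+1})(t')\,dt'$, where convergence of the integral in $H^1$ follows from the dual smoothing and Strichartz estimates applied to $\partial_x(u^{k+1})$ using the finite norms from Step 1; here $L^2$-control comes from the dual of \eqref{reg11} and $\dot H^1$-control from the corresponding estimate with one extra derivative handled via Lemma \ref{lemma6}. Writing the solution through \eqref{IntEq} as $u(t)=U(t)u_0+\int_0^{t}U(t-t')\partial_x(u^{k+1})(t')\,dt'$, I obtain
\begin{equation*}
u(t)-U(t)\phi^+=-\int_t^{\infty}U(t-t')\partial_x(u^{k+1})(t')\,dt'.
\end{equation*}
Taking the $H^1$-norm and using the dual estimates on the tail interval $[t,\infty)$ (this is exactly where Remark \ref{infinity} is invoked to replace $\int_0^t$ by $\int_t^{\infty}$), I bound the right-hand side by the norms of $\partial_x(u^{k+1})$ restricted to $[t,\infty)$. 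Since these norms are finite over $[0,+\infty)$ by Step 1, their restriction to $[t,\infty)$ tends to $0$ as $t\to+\infty$ by dominated convergence, which gives \eqref{SCATT}.

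\textbf{Main obstacle.} The delicate point is not the abstract scheme but bookkeeping the full $H^1$ (rather than merely $\dot H^{s_k}$) estimate on the nonlinearity: the small-data proposition is phrased in the critical space, so I must separately propagate both the $L^2$ and the $\dot H^1$ information. The $\dot H^1$ piece requires distributing the derivative $\partial_x$ and the factor $D^1_x$ across $u^{k+1}$ via the Leibniz and chain rules of Lemma \ref{lemma6}, producing terms like $\|D^1_xu\|\,\|u\|_{L^\infty}^{k}$ that must be closed using the a priori bound $\sup_t\|u(t)\|_{H^1}\le K$ together with the smoothing norms; controlling the high power $u^k$ in a space compatible with the available Strichartz exponents, and confirming that the resulting dual norm is genuinely finite on $[0,+\infty)$ so that dominated convergence applies, is the step that demands the most care.
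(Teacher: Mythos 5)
Your proposal is correct and takes essentially the same route as the paper: the same definition of $\phi^+$ and tail identity $u(t)-U(t)\phi^+=-\int_t^{\infty}U(t-t')\partial_x(u^{k+1})(t')\,dt'$, and the same interval-partition bootstrap (running the integral equation from each $t_j$ with the estimates of Lemma \ref{lemma2} and absorbing the small $\delta^k$ factor) to upgrade the finite $L^{5k/4}_xL^{5k/2}_{[0,+\infty)}$ norm to finiteness of the $L^{5}_xL^{10}$ norms of $u$ and $u_x$. The only cosmetic difference is bookkeeping: the paper factors the tail bound as $\|u\|^k_{L^{5k/4}_xL^{5k/2}_{[t,\infty)}}$ times global $L^{5}_xL^{10}$ norms and lets the first factor vanish, whereas you apply dominated convergence directly to the dual norms of $\partial_x(u^{k+1})$ restricted to $[t,\infty)$; these are equivalent.
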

\begin{proof}
The proof is quite standard, so we give only the main steps.
Let 
\begin{equation*}
\phi^+=u_0+\int_0^{\infty}U(-t')\partial_x(u^{k+1})(t')dt'.
\end{equation*}
Since $u$ is a solution of \eqref{IntEq} with $t_0=0$, we have 
\begin{equation*}
u(t)-U(t)\phi^+=-\int_t^{\infty}U(t-t')\partial_x(u^{k+1})(t')dt'.
\end{equation*}
Therefore,  from Lemma \ref{kpvlemma} (see also Remark \ref{infinity}), it is easy to see that
\begin{equation*}
\|u(t)-U(t)\phi^+\|_{H^1}\leq \|u\|^k_{L^{5k/4}_xL^{5k/2}_{[t,\infty)}}\left( \|u\|_{L^{5}_xL^{10}_{[0,+\infty)}}+ \|u_x\|_{L^{5}_xL^{10}_{[0,+\infty)}} \right). 
\end{equation*}

Since $\|u\|_{L^{5k/4}_xL^{5k/2}_{[t,\infty)}}\rightarrow 0$, as $t\rightarrow \infty$, to obtain \eqref{SCATT} it suffices to verify that 
\begin{equation}\label{SCATT2}
\|u\|_{L^{5}_xL^{10}_{[0,+\infty)}}+ \|u_x\|_{L^{5}_xL^{10}_{[0,+\infty)}} <\infty. 
\end{equation}
Indeed, since $\|u\|_{L^{5k/4}_xL^{5k/2}_{[0,+\infty)}}<\infty$, for any given $\delta>0$ we decompose $[0,\infty)$ into $N$ many intervals $I_j=[t_j,t_{j+1})$ such that $\|u\|_{L^{5k/4}_xL^{5k/2}_{I_j}}<\delta$ for all $j=1,\dots, N$. On the time interval $I_j$ we consider the integral equation
\begin{equation*}
u(t)=U(t-t_j)u(t_j)+\int_{t_j}^tU(t-t')\partial_x(u^{k+1})(t')dt'.
\end{equation*}

By using Lemma \ref{lemma2} with $(p_1,q_1,\alpha_1)=(5,10,0)$ and $(p_2,q_2,\alpha_2)=(\infty,2,1)$, we get
\begin{equation*}
\|u\|_{L^{5}_xL^{10}_{I_j}}+ \|u_x\|_{L^{5}_xL^{10}_{I_j}} \leq 2\left( \|u(t_j)\|_{H^1} + \delta^k (\|u\|_{L^{5}_xL^{10}_{I_j}}+ \|u_x\|_{L^{5}_xL^{10}_{I_j}})\right).
\end{equation*}
Taking $\delta>0$ sufficiently small such that $2\delta^k<1/2$, we have $\|u\|_{L^{5}_xL^{10}_{I_j}}+ \|u_x\|_{L^{5}_xL^{10}_{I_j}} \lesssim  \sup_{t\in \R}\|u(t)\|_{H^1}$. By summing over all the $N$ intervals we obtain \eqref{SCATT2}.

In a similar fashion we obtain the second statement, which finishes the
proof of Proposition \ref{PROPSCAT}.
\end{proof}

Next, we prove a perturbation result. We follow the exposition in \cite{KKSV} (see also \cite{HR08} and \cite{CFX}). We start with the following stability result, where we assume that the perturbed solution has small Strichartz norm.

\begin{proposition}[Perturbation theory I]\label{STPT}
For some error function $e$, let $\widetilde{u}$  be a global solution (in the sense of the appropriated integral equation) to
\begin{equation*}
\partial_t \widetilde{u}+\partial_{xxx}\widetilde{u}-\partial_x(\widetilde{u}^{k+1})=\partial_xe,
\end{equation*}
with initial data $\widetilde{u}_0\in H^1$, satisfying 
$$
\sup_{t \in \R}\|\widetilde{u}(t)\|_{H^1}\leq M \quad {\rm and} \quad \|\widetilde{u}\|_{_{L^{5k/4}_xL^{5k/2}_t}}\leq \varepsilon,
$$
for some positive constant $M$ and some small $0<\varepsilon<\varepsilon_0=\varepsilon_0(M,M')$.

Let $u_0\in H^1$ be such that 
$$\|u_0-\widetilde{u}_0\|_{H^1}\leq M' \quad {\rm and} \quad \|U(t)(u_0-\widetilde{u}_0)\|_{_{L^{5k/4}_xL^{5k/2}_t}}\leq \varepsilon,$$
for some positive constant $M'$.

Moreover, assume the following smallness condition on  $e$,
\begin{equation*}
\|D^{s_k}_xe\|_{L^1_xL^2_t}+\|D_xe\|_{L^1_xL^2_t}+\|e\|_{L^1_xL^2_t}\leq \varepsilon.
\end{equation*}

Then, there exists a global solution, say, $u$, to the generalized KdV equation \eqref{gkdv} with initial data $u_0$ at $t=0$ satisfying
\[
\begin{split}
	\|D^{s_k}_x(u^{k+1}-\widetilde{u}^{k+1})\|_{L^1_xL^2_t} + \|D_x(u^{k+1}-\widetilde{u}^{k+1})\|_{L^1_xL^2_t} +\|u^{k+1}-\widetilde{u}^{k+1}\|_{L^1_xL^2_t} \lesssim &\;\varepsilon,\\
	\|u\|_{L^{5k/4}_xL^{5k/2}_t}\lesssim & \; \varepsilon,\\
	\sup_{t \in \R}\|{u}(t)\|_{H^1}+\|D^{s_k}_xu\|_{L^5_xL^{10}_t} + \|D_xu\|_{L^5_xL^{10}_t} +\|u\|_{L^5_xL^{10}_t} \lesssim &\;  C(M,M').
\end{split}
\]
\end{proposition}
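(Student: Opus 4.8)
The plan is to estimate the difference $w:=u-\widetilde{u}$, which solves, in Duhamel form,
\begin{equation*}
w(t)=U(t)(u_0-\widetilde{u}_0)-\int_0^t U(t-t')\partial_x e(t')\,dt'+\int_0^t U(t-t')\partial_x\big[(\widetilde{u}+w)^{k+1}-\widetilde{u}^{k+1}\big](t')\,dt'.
\end{equation*}
Since $u_0\in H^1$, the solution $u$ exists globally by Theorem \ref{global5}, and on every bounded interval $[0,T]$ all the norms below are finite by Remark \ref{Rema-SDT} and the local theory; the content is therefore the \emph{a priori} bounds, which I would obtain by a continuity argument in $T$. I would track two quantities: the scattering norm $S:=\|w\|_{L^{5k/4}_xL^{5k/2}_{[0,T]}}$, whose target bound is $\lesssim\varepsilon$, and the energy-scale bundle $N:=\sup_{t\in[0,T]}\|w(t)\|_{H^1}+\|D^{s_k}_xw\|_{L^5_xL^{10}_{[0,T]}}+\|D_xw\|_{L^5_xL^{10}_{[0,T]}}+\|w\|_{L^5_xL^{10}_{[0,T]}}$, whose target is $\lesssim C(M,M')$. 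As a preliminary, feeding $\sup_t\|\widetilde{u}\|_{H^1}\le M$, $\|\widetilde{u}\|_{L^{5k/4}_xL^{5k/2}_t}\le\varepsilon$ and the smallness of $e$ into Lemma \ref{lemma2} and the Corollary following it, one checks that $\widetilde{u}$ itself satisfies $\|D^{s_k}_x\widetilde{u}\|_{L^5_xL^{10}_t}+\|D_x\widetilde{u}\|_{L^5_xL^{10}_t}+\|\widetilde{u}\|_{L^5_xL^{10}_t}\lesssim C(M)$.

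The heart of the matter is the nonlinear estimate. Using $(\widetilde{u}+w)^{k+1}-\widetilde{u}^{k+1}=(k+1)\int_0^1(\widetilde{u}+\theta w)^k\,d\theta\,w$ and the fractional Leibniz and chain rules of Lemma \ref{lemma6}, I would estimate $\|D^\sigma_x[(\widetilde{u}+w)^{k+1}-\widetilde{u}^{k+1}]\|_{L^1_xL^2_{[0,T]}}$ for $\sigma\in\{0,s_k,1\}$ by distributing at most one derivative onto a single factor, placing that factor in $L^5_xL^{10}_t$ and the remaining $k$ factors in $L^{5k/4}_xL^{5k/2}_t$. The H\"older exponents balance since $\tfrac15+k\cdot\tfrac{4}{5k}=1$ and $\tfrac1{10}+k\cdot\tfrac{2}{5k}=\tfrac12$. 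Since every monomial contains at least one factor $w$ and all $k$ factors sitting in $L^{5k/4}_xL^{5k/2}_t$ are small (norm $\le\varepsilon$ for $\widetilde{u}$ and $\le S$ for $w$), while only a single factor lands in the energy-scale slot, each such norm is bounded by $(\varepsilon+S)^k\big(C(M)+N\big)$: a genuine gain $(\varepsilon+S)^k$ multiplying at most one large factor.

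I would then close a bootstrap. Inserting these bounds into the Corollary following Lemma \ref{lemma2} controls the Duhamel contribution to $S$, while Lemma \ref{lemma2} with $(p_1,q_1,\alpha_1)=(5,10,0)$ and $(p_2,q_2,\alpha_2)=(\infty,2,1)$ together with the dual smoothing estimate of Lemma \ref{kpvlemma} controls the $L^5_xL^{10}_t$ and $\sup_t\|\cdot\|_{H^1}$ pieces of $N$; the error $e$ contributes only through the small norms $\|D^{s_k}_xe\|_{L^1_xL^2_t}$, $\|D_xe\|_{L^1_xL^2_t}$, $\|e\|_{L^1_xL^2_t}\le\varepsilon$. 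This yields the coupled inequalities $S\lesssim\varepsilon+(\varepsilon+S)^k(C(M)+N)$ and $N\lesssim M'+\varepsilon+(\varepsilon+S)^k(C(M)+N)$. Under the bootstrap hypotheses $S\le 2c\varepsilon$, $N\le 2C(M,M')$ one has $(\varepsilon+S)^k\lesssim\varepsilon^k$; choosing $\varepsilon_0=\varepsilon_0(M,M')$ so small that $\varepsilon^k(\text{const})\le\tfrac12$ (to absorb $N$) and $\varepsilon^{k-1}C(M,M')\le1$, the estimates improve to $N\lesssim C(M,M')$ and $S\lesssim\varepsilon$, and the continuity argument propagates them to $T=\infty$. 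The same bounds give the nonlinearity-difference conclusions at size $(\varepsilon+S)^kC(M,M')\lesssim\varepsilon$ and $\|u\|_{L^{5k/4}_xL^{5k/2}_t}\le\|\widetilde u\|_{L^{5k/4}_xL^{5k/2}_t}+S\lesssim\varepsilon$; running the identical argument on $(-\infty,0]$ finishes the proof.

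The main obstacle is precisely the two-scale coupling just exploited: at the critical $\dot H^{s_k}$ scale one must extract the full gain $(\varepsilon+S)^k$ so that the Duhamel terms are small, whereas at the $H^1$ scale only boundedness by the possibly large constant $C(M,M')$ is available. Closing the loop forces the gain $(\varepsilon+S)^{k-1}$ to dominate $C(M,M')$, which is exactly why the threshold $\varepsilon_0$ must depend on both $M$ and $M'$. Making this rigorous requires, because $D^{s_k}_x$ satisfies no exact product rule, a careful application of Lemma \ref{lemma6} and a precise account --- for each monomial of $(\widetilde{u}+w)^{k+1}-\widetilde{u}^{k+1}$ --- of which single factor carries the derivative and the energy-scale norm and which $k$ factors carry the smallness, with the H\"older exponents matching simultaneously at all three derivative orders $\sigma\in\{0,s_k,1\}$.
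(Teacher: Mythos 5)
Your proposal is correct and follows essentially the same route as the paper's proof: the same preliminary step showing $\|D^{s_k}_x\widetilde u\|_{L^5_xL^{10}_t}+\|D_x\widetilde u\|_{L^5_xL^{10}_t}+\|\widetilde u\|_{L^5_xL^{10}_t}\lesssim M$, the same equation for $w=u-\widetilde u$ in Duhamel form, the same Leibniz/H\"older splitting (one derivative factor in $L^5_xL^{10}_t$, the remaining $k$ factors in $L^{5k/4}_xL^{5k/2}_t$, each of size at most $\varepsilon+S$), and the same smallness bookkeeping that forces $\varepsilon_0=\varepsilon_0(M,M')$. The only difference is packaging: the paper closes the estimates by showing the Duhamel map $\Phi$ preserves the set $B_{a,b}$ (a fixed-point--style argument), while you run a continuity/bootstrap argument in $T$ on the already-existing global solution; these are interchangeable here.
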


\begin{proof}
First we show that for $\varepsilon_0>0$ sufficiently small, if 
$$\|\widetilde{u}\|_{_{L^{5k/4}_xL^{5k/2}_t}}\leq \varepsilon_0$$
then
$$\|D^{s_k}_x\widetilde{u}\|_{_{L^{5}_xL^{10}_t}}\leq 2cM,$$
for some constant $c>0$. 
Indeed, consider the integral equation associated to the solution $\widetilde{u}$, that is,
\begin{equation}\label{IE}
\widetilde{u}(t)=U(t)\widetilde{u}_0+\int_0^tU(t-t')\partial_x(\widetilde{u}^{k+1}+e)(t')dt'.
\end{equation}
Now, applying Lemmas \ref{lemma2} to the integral equation \eqref{IE} and using Lemma \ref{lemma6}, we obtain
\begin{eqnarray*}
\|D^{s_k}_x\widetilde{u}\|_{L^5_xL^{10}_t}&\leq&
c\|D^{s_k}_xU(t)\widetilde{u}_0\|_{L^5_xL^{10}_t}+c\|\widetilde{u}\|_{L^{5k/4}_xL^{5k/2}_t}^k\|D^{s_k}_x\widetilde{u}\|_{L^5_xL^{10}_t} + \|D^{s_k}e\|_{L^1_xL^2_t}\\
&\leq& cM+c\varepsilon_0^k\|D^{s_k}_x\widetilde{u}\|_{L^5_xL^{10}_t}+\varepsilon_0.
\end{eqnarray*}
By taking $\varepsilon_0$ sufficiently small we conclude the claim.

Similar estimates also imply
$$\|D_x\widetilde{u}\|_{_{L^{5}_xL^{10}_t}}\leq 2cM \quad \textrm{ and } \quad 
\|\widetilde{u}\|_{_{L^{5}_xL^{10}_t}}\leq 2cM.$$

Since $u_0\in H^1$, we can assume that the global solution $u$ already exists. Thus, roughly speaking,  we need only to give the stated estimates for an appropriate $\varepsilon_0$. Let $w=u-\widetilde{u}$, then $w$ solves the equation
\begin{equation}\label{IEw}
\partial_t w+\partial_x^3 w-\partial_x((w+\widetilde{u})^{k+1}-\widetilde{u}^{k+1}-e)=0.
\end{equation}
We will proceed to see how the constant $\varepsilon_0$ affect the solution $w$.
Define 
$$
\Phi(w)(t):=U(t)w_0+\int_0^tU(t-t')\partial_x((w+\widetilde{u})^{k+1}-\widetilde{u}^{k+1}-e)(t')dt'
$$
and let $B_{a,b}$ be the set all functions on $\R^2$ such that $\|w\|_{L^{5k/4}_xL^{5k/2}_t}\leq a$ and
$$
\max\{\|D^{s_k}_xw\|_{L^5_xL^{10}_t}, \|D_xw\|_{L^5_xL^{10}_t}, \|w\|_{L^5_xL^{10}_t}, \|D_xw\|_{L^{\infty}_tL^2_x}, \|w\|_{L^{\infty}_tL^2_x}\}\leq b.
$$

Applying Lemma \ref{lemma1}, Lemma \ref{lemma12}, Lemma \ref{lemma2} and the Leibnitz rule for fractional derivative (Lemma \ref{lemma6}), we obtain
\begin{eqnarray*}
\|\Phi(w)\|_{L^{5k/4}_xL^{5k/2}_t}&\leq&
\|U(t)w_0\|_{L^{5k/4}_xL^{5k/2}_t}+c\left(\sum_{n=0}^k\|D^{s_k}_xw\|_{L^5_xL^{10}_t}\|w\|_{L^{5k/4}_xL^{5k/2}_t}^{k-n}
\|\widetilde{u}\|_{L^{5k/4}_xL^{5k/2}_t}^{n}\right.\\
&&+\left. \sum_{n=1}^k\|w\|_{L^{5k/4}_xL^{5k/2}_t}^{k-n+1}\|D^{s_k}_x\widetilde{u}\|_{L^5_xL^{10}_t}
\|\widetilde{u}\|_{L^{5k/4}_xL^{5k/2}_t}^{n-1} \right)\\
&\leq& \varepsilon +c\|D^{s_k}_xw\|_{L^5_xL^{10}_t}\|w\|_{L^{5k/4}_xL^{5k/2}_t}^{k}+
cM\|w\|_{L^{5k/4}_xL^{5k/2}_t}^{k}\\
&&+c\sum_{n=1}^k\varepsilon^{n}\|D^{s_k}_xw\|_{L^5_xL^{10}_t}\|w\|_{L^{5k/4}_xL^{5k/2}_t}^{k-n}+
cM\sum_{n=1}^{k-1}\varepsilon^{n}\|w\|_{L^{5k/4}_xL^{5k/2}_t}^{k-n}.
\end{eqnarray*}
By similar estimates, we also have
\begin{eqnarray*}
\|D^{s_k}_x\Phi(w)\|_{L^5_xL^{10}_t}&\leq& cM' +c\|D^{s_k}_xw\|_{L^5_xL^{10}_t}\|w\|_{L^{5k/4}_xL^{5k/2}_t}^{k}+
cM\|w\|_{L^{5k/4}_xL^{5k/2}_t}^{k}\\
&&+c\sum_{n=1}^k\varepsilon^{n}\|D^{s_k}_xw\|_{L^5_xL^{10}_t}\|w\|_{L^{5k/4}_xL^{5k/2}_t}^{k-n}+
cM\sum_{n=1}^{k-1}\varepsilon^{n}\|w\|_{L^{5k/4}_xL^{5k/2}_t}^{k-n},
\end{eqnarray*}
\begin{eqnarray*}
\|D_x\Phi(w)\|_{L^5_xL^{10}_t}&\leq& cM' +c\|D_xw\|_{L^5_xL^{10}_t}\|w\|_{L^{5k/4}_xL^{5k/2}_t}^{k}+
cM\|w\|_{L^{5k/4}_xL^{5k/2}_t}^{k}\\
&&+c\sum_{n=1}^k\varepsilon^{n}\|D_xw\|_{L^5_xL^{10}_t}\|w\|_{L^{5k/4}_xL^{5k/2}_t}^{k-n}+
cM\sum_{n=1}^{k-1}\varepsilon^{n}\|w\|_{L^{5k/4}_xL^{5k/2}_t}^{k-n},
\end{eqnarray*}
\begin{eqnarray*}
\|\Phi(w)\|_{L^5_xL^{10}_t}&\leq& cM' +c\|w\|_{L^5_xL^{10}_t}\|w\|_{L^{5k/4}_xL^{5k/2}_t}^{k}+
cM\|w\|_{L^{5k/4}_xL^{5k/2}_t}^{k}\\
&&+c\sum_{n=1}^k\varepsilon^{n}\|w\|_{L^5_xL^{10}_t}\|w\|_{L^{5k/4}_xL^{5k/2}_t}^{k-n}+
cM\sum_{n=1}^{k-1}\varepsilon^{n}\|w\|_{L^{5k/4}_xL^{5k/2}_t}^{k-n}.
\end{eqnarray*}
The estimates of the norms $\|D_x\Phi(w)\|_{L^{\infty}_tL^2_x}$ and $\|\Phi(w)\|_{L^{\infty}_tL^2_x}$ are exactly the same. First, we choose $b=2cM'$, $a$ such that $ca^k\leq cM'/4$ and $cMa^k\leq cM'/4$. Moreover, we can choose $\varepsilon_0$ sufficiently small so that
$$
c\sum_{n=1}^k \varepsilon^nba^{k-n}+cM\sum_{n=1}^{k-1} \varepsilon^na^{k-n}\leq cM'/2.
$$
Thus
$$\max\left\{\|D^{s_k}_x\Phi(w)\|_{L^5_xL^{10}_t}, \|D_x\Phi(w)\|_{L^5_xL^{10}_t}, \|\Phi(w)\|_{L^5_xL^{10}_t}, \|D_x\Phi(w)\|_{L^{\infty}_tL^2_x}, \|\Phi(w)\|_{L^{\infty}_tL^2_x}\right\}\leq b.$$
Next,  if one chooses $\varepsilon=a/2$ and $a$ so that
$$
a^k(cb+cM)\Big(1+\sum_{n=1}^{k}(1/2)^n+\sum_{n=1}^{k-1}(1/2)^n\Big)\leq a/2
$$ 
we also have
$$
\|\Phi(w)\|_{L^{5k/4}_xL^{5k/2}_t}\leq a.
$$

Therefore, standard arguments imply that
$$
\|w\|_{L^{5k/4}_xL^{5k/2}_t}\leq 2\varepsilon.
$$

Moreover, our choice of parameters $a$ and $b$ also imply
$$\|D^{s_k}_x(u^{k+1}-\widetilde{u}^{k+1})\|_{L^1_xL^2_t} + \|D_x(u^{k+1}-\widetilde{u}^{k+1})\|_{L^1_xL^2_t}+\|u^{k+1}-\widetilde{u}^{k+1}\|_{L^1_xL^2_t} \lesssim \varepsilon,$$
which concludes the proof.
\end{proof}

\begin{remark}
If $\widetilde{u}$ is not a global solution but instead is a local one defined on $\R\times I$ with initial data $\widetilde{u}(t_0)=\widetilde{u}_0$, where $I\subset\R$ is some time interval, and the assumption on the norms $L^p_xL^q_t$ are replaced by the norms in $L^p_xL^q_I$, we can repeat the proof of Proposition \ref{STPT} to obtain a solution $u$ defined on $\R\times I$ and satisfying the same conclusions but with the norms $L^p_xL^q_t$ replaced by $L^p_xL^q_I$.
\end{remark}

In the next proposition, we show that Proposition \ref{STPT} can also be iterated to obtain a stability result without assuming smallness in the Strichartz norm $L^{5k/4}_xL^{5k/2}_t$.

\begin{proposition}[Perturbation theory II]\label{LTPT} For some error function $e$, let $\widetilde{u}$  be a global solution (in the sense of the appropriated integral equation) to
\begin{equation*}
\partial_t \widetilde{u}+\partial_{xxx}\widetilde{u}-\partial_x(\widetilde{u}^{k+1})=\partial_xe,
\end{equation*}
with initial data $\widetilde{u}_0\in H^1$ at $t=0$, satisfying 
$$\sup_{t \in \R}\|\widetilde{u}(t)\|_{H^1}\leq M \quad {\rm and} \quad \|\widetilde{u}\|_{_{L^{5k/4}_xL^{5k/2}_t}}\leq L,$$
for some positive constants $M,L$.

Let $u_0\in H^1$ be such that 
$$\|u_0-\widetilde{u}_0\|_{H^1}\leq M' \quad {\rm and} \quad \|U(t)(u_0-\widetilde{u}_0)\|_{_{L^{5k/4}_xL^{5k/2}_t}}\leq \varepsilon,$$
for some positive constant $M'$ and some small $0<\varepsilon<\varepsilon_1=\varepsilon_1(M,M',L)$.

Moreover, assume also the following smallness condition on the error $e$ 
$$\|D^{s_k}_xe\|_{L^1_xL^2_t}+\|D_xe\|_{L^1_xL^2_t}+\|e\|_{L^1_xL^2_t}\leq \varepsilon.$$

Then, there exists a global solution, say, $u$, to the generalized KdV equation \eqref{gkdv} with initial data $u_0$ at $t=0$ satisfying
\begin{eqnarray*}
\|u-\widetilde{u}\|_{L^{5k/4}_xL^{5k/2}_t}&\leq& C(M,M')\varepsilon,\\
\sup_{t\in \R}\|(u-\widetilde{u})(t)\|_{H^{1}_x}  &\leq& C(M,M'),\\
\|u\|_{L^{5k/4}_xL^{5k/2}_t} + \|D^{s_k}_xu\|_{L^5_xL^{10}_t} + \|D_xu\|_{L^5_xL^{10}_t} +\|u\|_{L^5_xL^{10}_t} &\leq& C(M,M').
\end{eqnarray*}
\end{proposition}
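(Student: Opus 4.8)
The plan is to bootstrap the short-time statement of Proposition \ref{STPT} along a finite partition of the time axis, the standard mechanism for upgrading a stability result to one without smallness of the reference Strichartz norm. Since $\|\widetilde{u}\|_{L^{5k/4}_xL^{5k/2}_t}\leq L<\infty$, I first split $[0,\infty)$ (and symmetrically $(-\infty,0]$) into $N=N(L)$ consecutive intervals $I_j=[t_j,t_{j+1})$, $t_0=0$, chosen so that $\|\widetilde{u}\|_{L^{5k/4}_xL^{5k/2}_{I_j}}$ lies below the smallness threshold required by Proposition \ref{STPT}; the number $N$ depends only on $L$ and on that threshold, hence ultimately only on $M,M'$. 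On the first interval $I_0$ the hypotheses of Proposition \ref{STPT} (in its local, interval version recorded in the Remark following it) hold verbatim, producing $u$ on $\R\times I_0$ together with the stated control of $w=u-\widetilde{u}$ and of the nonlinear differences in $L^1_xL^2_{I_0}$.

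The inductive step is the heart of the matter. Assuming $u$ has been built on $\R\times[0,t_{j+1})$ with the Proposition \ref{STPT} estimates valid on each $I_l$, $l\leq j$, I must re-verify the two smallness hypotheses at the new base point $t_{j+1}$: a control $\|w(t_{j+1})\|_{H^1}\leq M'_{j+1}$, which follows by summing the $H^1$ estimates for $w$ over $I_0,\dots,I_j$, and the scattering smallness $\|U(t-t_{j+1})w(t_{j+1})\|_{L^{5k/4}_xL^{5k/2}_{I_{j+1}}}\leq \varepsilon_{j+1}$. Since $w(t_{j+1})$ is merely bounded, and not small, in $H^1$, the direct Strichartz bound via \eqref{STR3} is useless; instead I would invoke the Duhamel identity (writing $w_0=u_0-\widetilde{u}_0$) to get
\begin{equation*}
U(t-t_{j+1})w(t_{j+1})=U(t)w_0+\int_0^{t_{j+1}}U(t-t')\partial_x F(t')\,dt', \qquad F:=(w+\widetilde{u})^{k+1}-\widetilde{u}^{k+1}-e.
\end{equation*}
The first term is $\leq\varepsilon$ on $I_{j+1}\subset[0,\infty)$ by hypothesis, while the Duhamel term is estimated by the Corollary following Lemma \ref{lemma2} together with Remark \ref{infinity}, in terms of $\|D^{s_k}_xF\|_{L^1_xL^2_{[0,t_{j+1})}}$; splitting this norm over $I_0,\dots,I_j$ and feeding in the nonlinear-difference bounds already obtained on those intervals, plus the smallness of $e$, controls $\varepsilon_{j+1}$ by $\varepsilon$ plus the accumulated contributions of the earlier intervals.

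Putting these together yields a recursion in which both $M'_{j+1}$ and $\varepsilon_{j+1}$ are governed by the quantities on the preceding intervals, with multiplicative constants depending only on $M,M'$ and the fractional Leibniz estimate (Lemma \ref{lemma6}). Because $N=N(L)$ is finite, the solution of this recursion remains bounded by a constant $C(M,M',L)$, and it suffices to fix the threshold $\varepsilon_1=\varepsilon_1(M,M',L)$ so small that, at each of the $N$ steps, $\varepsilon_j$ stays below the threshold $\varepsilon_0(M,M'_j)$ demanded by Proposition \ref{STPT} while $M'_j$ stays below a fixed bound. Summing the resulting estimates over the $N$ intervals (and over the symmetric family for $t<0$) then delivers the three displayed conclusions, with the final constants inheriting the dependence on $L$ through $N$. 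The main obstacle is exactly this accumulation: linearization terms of the form $\|\widetilde{u}\|^k_{L^{5k/4}_xL^{5k/2}_{I_j}}\|D^{s_k}_xw\|_{L^5_xL^{10}_{I_j}}$ cannot be made small by smallness of $w$ alone, since $\widetilde{u}$ is only bounded; one must therefore order the choices of $N$ and $\varepsilon_1$ so that the per-interval losses compound into a finite $C(M,M',L)$ rather than escaping control over the finitely many steps.
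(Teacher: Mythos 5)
Your overall strategy---inducting over finitely many time intervals on which $\widetilde u$ has small scattering norm, and re-verifying the two smallness hypotheses of Proposition \ref{STPT} at each new base point $t_{j}$ through a Duhamel identity---is the same as the paper's, and the identity you write for transporting the smallness of $U(t)w_0$ (with $w=u-\widetilde u$) to later base points is exactly the one used there. The gap is in how you set up the partition. You split $[0,\infty)$ so that $\|\widetilde u\|_{L^{5k/4}_xL^{5k/2}_{I_j}}$ lies below the \emph{fixed} threshold $\varepsilon_0(M,M')$ of Proposition \ref{STPT}, making $N$ independent of $\varepsilon$. But Proposition \ref{STPT} is a one-parameter statement: it requires $\|\widetilde u\|_{L^{5k/4}_xL^{5k/2}_t}\leq\varepsilon$ with the \emph{same} $\varepsilon$ that bounds $\|U(t)(u_0-\widetilde u_0)\|_{L^{5k/4}_xL^{5k/2}_t}$ and the error, and its conclusions scale with that common parameter. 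Under your partition the hypotheses on $I_j$ hold only with the parameter $\varepsilon_0$, not $\varepsilon$ (so they do not hold ``verbatim'' even on $I_0$), and each black-box application of Proposition \ref{STPT} returns bounds of size $O(\varepsilon_0)$ rather than $O(\varepsilon)$. This breaks the recursion: the accumulated smallness at the next base point obeys only $\varepsilon_{j+1}\lesssim\varepsilon+\sum_{l\leq j}C(l,M,M')\,\varepsilon_0$, which exceeds the threshold $\varepsilon_0$ after $O(1)$ intervals no matter how small you take $\varepsilon_1$, since these contributions do not shrink with $\varepsilon$; and the conclusion $\|u-\widetilde u\|_{L^{5k/4}_xL^{5k/2}_t}\leq C(M,M')\varepsilon$ is lost altogether.

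Nor can this be repaired by proving a two-parameter version of Proposition \ref{STPT} (smallness $\delta_0$ for $\widetilde u$, smallness $\varepsilon$ for data and error, conclusions linear in $\varepsilon$): the very term you single out, $\|\widetilde u\|^k_{L^{5k/4}_xL^{5k/2}_{I_j}}\|D^{s_k}_xw\|_{L^5_xL^{10}_{I_j}}$, is linear in $w$ but measured in a norm in which $w$ is only bounded by $\approx M'$, and in this gKdV scheme there is no Sobolev-type embedding controlling $\|D^{s_k}_xw\|_{L^5_xL^{10}}$ by the scattering norm (unlike the energy-critical NLS setting, where both smallness hypotheses and conclusions live in one gradient-Strichartz norm). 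With a fixed $\delta_0$ this term contributes $\approx M'\delta_0^k$, which does not scale with $\varepsilon$. The paper's resolution---and the reason its proof is structured as it is---is to take the partition threshold equal to $\varepsilon$ itself, so that this term becomes $\lesssim M'\varepsilon^k\lesssim\varepsilon$ and all per-interval conclusions are of size $C(j,M,M')\varepsilon$; the price is that $N=N(L,\varepsilon)$ depends on $\varepsilon$, after which $\varepsilon_1$ is chosen small depending on $N,M,M'$. So you correctly identified the dangerous term, but the remedy you propose (fixed threshold, $N=N(L)$) is precisely the choice that fails; in this scheme the dependence of $N$ on $\varepsilon$ cannot be avoided.
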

\begin{proof}
Let $\varepsilon_0$ be given in Proposition \ref{STPT}. Since $\|\widetilde{u}\|_{_{L^{5k/4}_xL^{5k/2}_t}}\leq L$, for any $0<\varepsilon < \varepsilon_0$ to be determined later,  we can split the interval $I=[0,\infty)$ into $N=N(L,\varepsilon)$ intervals $I_j=[t_j,t_{j+1})$ so that $\|\widetilde{u}\|_{_{L^{5k/4}_xL^{5k/2}_{I_j}}}\leq \varepsilon$. In this case, we already know from the proof of Proposition \ref{STPT} that
$$
\|D^{s_k}_x\widetilde{u}\|_{_{L^{5}_xL^{10}_{I_j}}}\leq 2cM, \quad \|D_x\widetilde{u}\|_{_{L^{5}_xL^{10}_{I_j}}}\leq 2cM \quad \textrm{ and } \quad 
\|\widetilde{u}\|_{_{L^{5}_xL^{10}_{I_j}}}\leq 2cM.
$$

 Let $u=\widetilde{u}+w$, then $w$ solves equation \eqref{IEw} in the interval $I_j$. The integral equation in this case reads as 
\begin{equation}\label{IEw2}
w(t)=U(t-t_j)w(t_j)+\int_{t_j}^tU(t-t')\partial_x((w+\widetilde{u})^{k+1}-\widetilde{u}^{k+1}-e)(t')dt'.
\end{equation}

Thus, for $0<\varepsilon < \varepsilon_0$, assuming a priori that $\varepsilon_1\leq \varepsilon_0$, Proposition \ref{STPT} and our assumptions imply
 $$
\|w\|_{L^{5k/4}_xL^{5k/2}_{I_j}}\leq 2\varepsilon,
$$
$$
\|w\|_{L^{\infty}_{I_j}H^1} + \|D^{s_k}_xw\|_{L^5_xL^{10}_{I_j}} + \|D_xw\|_{L^5_xL^{10}_{I_j}} +\|w\|_{L^5_xL^{10}_{I_j}} \leq C(M,M'),
$$
and
$$\|D^{s_k}_x(u^{k+1}-\widetilde{u}^{k+1})\|_{L^1_xL^2_{I_j}} + \|D_x(u^{k+1}-\widetilde{u}^{k+1})\|_{L^1_xL^2_{I_j}} +\|u^{k+1}-\widetilde{u}^{k+1}\|_{L^1_xL^2_{I_j}} \leq C(M,M') \varepsilon.$$

By choosing $\varepsilon_1$ sufficiently small depending on $N,M,M'$, we can apply this procedure inductively to obtain, for each $0\leq j<N$ and all $0<\varepsilon < \varepsilon_1$,
$$
\|u-\widetilde{u}\|_{L^{5k/4}_xL^{5k/2}_{I_j}}\leq C(j,M,M')\varepsilon,
$$
$$
\|D^{s_k}_x(u^{k+1}-\widetilde{u}^{k+1})\|_{L^1_xL^2_{I_j}} + \|D_x(u^{k+1}-\widetilde{u}^{k+1})\|_{L^1_xL^2_{I_j}} +\|u^{k+1}-\widetilde{u}^{k+1}\|_{L^1_xL^2_{I_j}}  \leq C(j,M,M') \varepsilon,
$$
and
$$
\|u\|_{L^{5k/4}_xL^{5k/2}_{I_j}} + \|D^{s_k}_xu\|_{L^5_xL^{10}_{I_j}} + \|D_xu\|_{L^5_xL^{10}_t} +\|u\|_{L^5_xL^{10}_{I_j}} \leq C(j,M,M')
$$
provided we can show that 
\begin{equation}\label{LTPTeq1}
\|u(t_j)-\widetilde{u}(t_j)\|_{H^1}\leq 2M'
\end{equation}
and 
\begin{equation}\label{LTPTeq2}
\|U(t-t_j)(u(t_j)-\widetilde{u}(t_j))\|_{_{L^{5k/4}_xL^{5k/2}_{I_j}}}\leq C(j,M,M')\varepsilon \leq \varepsilon_0.
\end{equation}

First we prove  \eqref{LTPTeq1}. Indeed, applying Lemma \ref{lemma2} and the inductive hypotheses, we obtain
\begin{eqnarray*}
\|u(t_j)-\widetilde{u}(t_j)\|_{H^1}&\leq& \|u_0-\widetilde{u}_0\|_{H^1} +\|D_x(u^{k+1}-\widetilde{u}^{k+1})\|_{L^1_xL^2_{[0,t_j]}} \\
&&+\|u^{k+1}-\widetilde{u}^{k+1}\|_{L^1_xL^2_{[0,t_j]}} + \|D_xe\|_{L^1_xL^2_t}+\|e\|_{L^1_xL^2_t}\\
&\leq& M' +2\sum_{k=1}^{j}C(k,M,M')\varepsilon + 2\varepsilon.
\end{eqnarray*}
Taking $\varepsilon_1=\varepsilon_1(N,M,M')$ sufficiently small we obtain the desired inequality.

Next, we turn our attention to \eqref{LTPTeq2}. Using the integral equation \eqref{IEw2} with $t_j$ replaced by $t_{j+1}$, we can easily conclude
$$
U(t-t_{j+1})w(t_{j+1})=U(t-t_j)w(t_j)+\int_{t_j}^{t_{j+1}}U(t-t')\partial_x((w+\widetilde{u})^{k+1}-\widetilde{u}^{k+1}-e)(t')dt'.
$$

Therefore, from Lemma \ref{lemma2} and the inductive hypotheses, we deduce
\begin{eqnarray*}
\|U(t-t_j)(u(t_j)-\widetilde{u}(t_j))\|_{_{L^{5k/4}_xL^{5k/2}_{I_j}}}&\leq&\|U(t)(u_0-\widetilde{u}_0)\|_{_{L^{5k/4}_xL^{5k/2}_t}} + \|D_x^{s_k}e\|_{L^1_xL^2_t} \\
&&+\|D_x^{s_k}(u^{k+1}-\widetilde{u}^{k+1})\|_{L^1_xL^2_{[0,t_j]}}\\
&\leq& 2\varepsilon + \sum_{k=1}^{j}C(k,M,M')\varepsilon,
\end{eqnarray*}
and again taking $\varepsilon_1=\varepsilon_1(N,M,M')$ sufficiently small we prove \eqref{LTPTeq2}. The same analysis can be performed on the interval $(-\infty,0]$. The proof of the proposition is thus completed.
\end{proof}

We finish this section by recalling the notion of a nonlinear profile
\begin{definition} \label{NLP}
Let $\psi \in {H}^{1}$ and $\{ t_{n} \}_{n \in \mathbb{N}}$ be a sequence with $\underset{n \to \infty}{\lim} t_{n} = \overline{t} \in [-\infty, \infty]$. We say that $u(x,t)$ is a nonlinear profile associated with $(\psi, \{ t_{n} \}_{n \in  \mathbb{N}})$ if there exists an interval $I = (a,b)$ with $\overline{t} \in I$ (if 
$\,\overline{t} = \pm \infty$, then $I = (a, +\infty)$ or $I = (-\infty, b)$, as appropriate) such that $u$ solves \eqref{gkdv} in $I$ and 
\begin{equation*}
\lim_{n \to \infty} \| u(t_{n}) - U(t_{n}) \psi \|_{{H}^{1}} = 0.
\end{equation*}
\end{definition}

\begin{remark}\label{4.2}
There always exists a unique nonlinear profile associated with $(\psi, \{ t_{n} \}_{n \in  \mathbb{N}})$.
For a proof of this fact, see the analogous one in Proposition 4.1 and Remark 4.2 in Farah and Pigott \cite{FP16}. Therefore, we can also define the maximal interval $\bar{I}$ of existence for the nonlinear profile associated with $(\psi, \{ t_{n} \}_{n \in \mathbb{N}})$. Moreover, if $k$ is even in \eqref{gkdv}, then $\bar{I}=\R$ in view of Theorem \ref{global5}.
\end{remark}

\section{Profile Decomposition}\label{sec4}

In this section we establish that any bounded sequence in $H^1$ has a  decomposition into linear profiles and a reminder with a suitable asymptotic smallness property in an adequate norm. Our strategy follows the ones, for instance, in  \cite{dhr} and \cite{CFX}.

We start  with the following lemma (see also Fang \textit{et al.} \cite[Lemma 5.3]{CFX}).

\begin{lemma}\label{LemmaCaz}
Let $\{z_n\}_{n\in \mathbb{N}}\subseteq H^1$ and $0\neq \psi\in H^1$ satisfying
$$
z_n \rightharpoonup 0 \peq \textrm{ and } \peq U(t_n)z_n(\cdot + x_n) \rightharpoonup \psi  \peq \textrm{in} \peq H^1, \peq \textrm{as} \peq n\rightarrow \infty.
$$
Then
$$
|t_n|+|x_n|\rightarrow \infty , \peq \textrm{as} \peq n\rightarrow \infty.
$$
\end{lemma}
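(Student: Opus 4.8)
The plan is to argue by contradiction. Suppose the conclusion fails; then there is a subsequence (still denoted $\{(t_n,x_n)\}$) along which $|t_n|+|x_n|$ stays bounded, and, passing to a further subsequence, I may assume $t_n\to t_*$ and $x_n\to x_*$ with $t_*,x_*\in\R$ finite. Writing $\tau_a f=f(\cdot+a)$, I will show that under this boundedness the transformed sequence satisfies $U(t_n)\tau_{x_n}z_n\rightharpoonup 0$ in $H^1$. Since weak limits are unique and the full sequence converges weakly to $\psi$, every subsequence also converges weakly to $\psi$; this forces $\psi=0$, contradicting $\psi\neq 0$.

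To establish $U(t_n)\tau_{x_n}z_n\rightharpoonup 0$, I test against an arbitrary $\phi\in H^1$ using the Hilbert-space inner product of $H^1$. Both the linear group $U(t)$ (Fourier multiplier $e^{it\xi^3}$) and the translation $\tau_a$ (multiplier $e^{ia\xi}$) are unitary on $H^1$, with adjoints $U(-t)$ and $\tau_{-a}$, so I may transfer them onto the test function:
\[
\langle U(t_n)\tau_{x_n}z_n,\phi\rangle_{H^1}=\langle z_n,\,\tau_{-x_n}U(-t_n)\phi\rangle_{H^1}=:\langle z_n,g_n\rangle_{H^1}.
\]
Setting $g_*:=\tau_{-x_*}U(-t_*)\phi$, I split
\[
\langle z_n,g_n\rangle_{H^1}=\langle z_n,g_*\rangle_{H^1}+\langle z_n,g_n-g_*\rangle_{H^1}.
\]
The first term tends to $0$ because $z_n\rightharpoonup 0$ in $H^1$. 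The second is bounded by $\|z_n\|_{H^1}\|g_n-g_*\|_{H^1}$, and $\|z_n\|_{H^1}$ is bounded (a weakly convergent sequence is bounded by Banach--Steinhaus), so it suffices to prove the \emph{strong} convergence $g_n\to g_*$ in $H^1$.

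The strong convergence $g_n\to g_*$ is exactly where the finiteness of $t_*,x_*$ is used. Since $\phi\in H^1$ is fixed, strong continuity of the KdV group on $H^1$ gives $U(-t_n)\phi\to U(-t_*)\phi$ in $H^1$ (by dominated convergence in frequency: $|e^{-it_n\xi^3}-e^{-it_*\xi^3}|^2\langle\xi\rangle^2|\widehat{\phi}|^2\to 0$ pointwise and is dominated by $4\langle\xi\rangle^2|\widehat{\phi}|^2\in L^1$), while strong continuity of translations gives $\tau_{-x_n}h\to\tau_{-x_*}h$ in $H^1$ for fixed $h$. Combining these through the triangle inequality together with the uniform boundedness (unitarity) of $\tau_{-x_n}$ yields $g_n\to g_*$, completing the contradiction. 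I expect the only delicate point to be precisely this norm convergence of the test functions $g_n$: it is this step that breaks down when $|t_n|+|x_n|\to\infty$, since then the symmetries carry the fixed profile $\phi$ off to spatial or frequency infinity and one retains only weak convergence of $g_n$. This is the structural reason the lemma must conclude that the parameters diverge.
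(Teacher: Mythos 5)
Your proof is correct and takes essentially the same route as the paper's: argue by contradiction, extract convergent subsequences $t_n\to t_*$, $x_n\to x_*$, move the unitaries $U(t_n)$ and the translations onto the test function, and split the pairing into a term killed by $z_n\rightharpoonup 0$ plus a remainder controlled by the boundedness of $\|z_n\|_{H^1}$ together with the strong $H^1$ convergence of the transported test functions. The only cosmetic differences are that you test directly against arbitrary $\phi\in H^1$ where the paper tests against Schwartz functions and invokes density, and you spell out the uniqueness-of-weak-limits step and the dominated-convergence computation that the paper leaves implicit.
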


\begin{proof}
Suppose by contradiction that there exists a subsequence,  still indexed by $n$, and a positive number $M$ satisfying
$$
|t_n|+|x_n|\leq M , \peq \textrm{for all} \peq n\in \mathbb{N}.
$$
Therefore, without loss of generality, there exist $\bar{t}, \bar{x}\in \R$ such that
$$
t_n\rightarrow \bar{t} \peq \textrm{ and } \peq x_n\rightarrow \bar{x}. 
$$

We claim that $U(t_n)z_n(\cdot + x_n) \rightharpoonup 0$ in $H^1$, which is an absurd. In fact, by standard density arguments, we only need to show that
$$
\left(U(t_n)z_n(\cdot + x_n) , \zeta \right)_{H^1} \rightarrow 0, \peq \textrm{as} \peq n\rightarrow \infty
$$
for all functions $\zeta$ in the Schwartz class. To see this, we write
\[
\begin{split}
	\left(U(t_n)z_n(\cdot + x_n) , \zeta \right)_{H^1} &= \left(z_n ,U(-t_n) \zeta(\cdot - x_n) \right)_{H^1} \\
	&= \left(z_n ,U(-\bar{t}) \zeta(\cdot - \bar{x}) \right)_{H^1}
	+\left(z_n ,U(-t_n) \zeta(\cdot - x_n)-U(-\bar{t}) \zeta(\cdot - \bar{x}) \right)_{H^1}\\
	&\leq \left(z_n ,U(-\bar{t}) \zeta(\cdot - \bar{x}) \right)_{H^1}\\
	&\quad
	+\|z_n\|_{H^1}\|U(-t_n) \zeta(\cdot - x_n)-U(-\bar{t}) \zeta(\cdot - \bar{x} )\|_{H^1}\to0, \;\;\mbox{as}\;n\to\infty.
\end{split}
\]
The first term on the r.h.s of the above inequality goes to zero because $z_n \rightharpoonup 0$. In addition, since $\{z_n\}_{n\in \mathbb{N}}$ is bounded in $H^1$, $t_n\rightarrow \bar{t}$ and $x_n\rightarrow \bar{x}$, by Lebesgue's Dominated Convergence Theorem, the norm involving the linear evolution also goes to zero. This proves the lemma.
\end{proof}

Next, we prove the following profile decomposition result.

\begin{theorem}\label{profdec}
 Let  $\{\phi_n\}_{n\in \mathbb{N}}$ be a bounded sequence in $H^1$. There exists a subsequence, which we still denote by $\{\phi_n\}_{n\in \mathbb{N}}$, and sequences $\{\psi^j\}_{j\in \mathbb{N}}\subset H^1$,  $\{W^j_n\}_{j, n \in \mathbb{N}}\subset H^1$, $\{t^j_n\}_{j, n \in \mathbb{N}}\subset \R$, $\{\bar{t}^j\}_{j\in \mathbb{N}}\subset [-\infty,+\infty]$ and $\{x^j_n\}_{j, n \in \mathbb{N}}\subset \R$, such that for every $l\geq1$,
\begin{equation*}
\phi_n=\sum_{j=1}^{l}U(t_n^j)\psi^j(\cdot-x^j_n) + W_n^l
\end{equation*}
and
\begin{equation*}
t_n^l\rightarrow \bar{t}^l,  \peqq \textrm{as} \peqq n\rightarrow \infty,
\end{equation*}
\begin{equation}\label{PYTHA}
\|\phi_n\|^2_{\dot{H}^{\lambda}}-\sum_{j=1}^{l}\|\psi^j\|^2_{\dot{H}^{\lambda}} - \|W_n^l\|^2_{\dot{H}^{\lambda}} \rightarrow 0,  \peqq \textrm{as} \peqq n\rightarrow \infty, \peqq \textrm{for all} \peqq 0\leq \lambda\leq 1,
\end{equation}
Furthermore, the time and space sequence have a pairwise divergence property: for $1\leq i \neq j\leq l$, we have
\begin{equation}\label{XT}
\lim_{n\rightarrow \infty} |t_n^i-t_n^j|+|x_n^i-x_n^j|=\infty.
\end{equation}
Finally, the reminder sequence has the following asymptotic smallness property
\begin{equation}\label{SPWNL}
\limsup_{n\rightarrow \infty} \|U(t)W^l_n\|_{L^{5k/4}_xL^{5k/2}_t} \rightarrow 0,  \peqq \textrm{as} \peqq l\rightarrow \infty.
\end{equation}
\end{theorem}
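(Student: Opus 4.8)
The plan is to build the profiles one at a time by iterated extraction of near-maximal weak limits, following the scheme of \cite{dhr} and \cite{CFX}. Set $W_n^0:=\phi_n$ and, having constructed $W_n^{l-1}$ (bounded in $H^1$ by $C:=\sup_n\|\phi_n\|_{H^1}$), introduce the functional
\[
\mu_{l-1}:=\sup\Big\{\|\psi\|_{H^1}\ :\ U(-t_n)W_n^{l-1}(\cdot+x_n)\rightharpoonup\psi\ \text{in}\ H^1\ \text{along a subsequence, for some } (t_n,x_n)\subset\R^2\Big\}.
\]
If $\mu_{l-1}=0$ the process stops and $W_n^{l-1}$ is already a good remainder; otherwise I pick $\psi^l$ realizing $\|\psi^l\|_{H^1}\ge\frac12\mu_{l-1}$ together with parameters $(t_n^l,x_n^l)$, pass to a subsequence so that $t_n^l\to\bar t^l\in[-\infty,+\infty]$, and define $W_n^l:=W_n^{l-1}-U(t_n^l)\psi^l(\cdot-x_n^l)$. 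This yields the stated decomposition $\phi_n=\sum_{j=1}^{l}U(t_n^j)\psi^j(\cdot-x_n^j)+W_n^l$, and a diagonal argument produces a single subsequence valid for all $l$.

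The orthogonality \eqref{PYTHA} is the first thing I would verify. Since $U(t)$ and spatial translations act isometrically on each $\dot H^\lambda$, $0\le\lambda\le1$, and since $v\mapsto\langle\psi^l,v\rangle_{\dot H^\lambda}$ is a bounded functional on $H^1$, the defining weak convergence gives $\langle U(t_n^l)\psi^l(\cdot-x_n^l),W_n^{l-1}\rangle_{\dot H^\lambda}\to\|\psi^l\|_{\dot H^\lambda}^2$; expanding $\|W_n^l\|_{\dot H^\lambda}^2$ then shows $\|W_n^l\|_{\dot H^\lambda}^2=\|W_n^{l-1}\|_{\dot H^\lambda}^2-\|\psi^l\|_{\dot H^\lambda}^2+o(1)$, and telescoping over $j$ yields \eqref{PYTHA}. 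Taking $\lambda=0,1$ and using $\|W_n^l\|_{\dot H^\lambda}^2\ge0$ bounds $\sum_j\|\psi^j\|_{H^1}^2\le C^2$, so $\|\psi^l\|_{H^1}\to0$ and hence $\mu_{l-1}\le2\|\psi^l\|_{H^1}\to0$ as $l\to\infty$.

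For the pairwise divergence \eqref{XT} I would argue by simultaneous induction together with the auxiliary claim that, for every fixed $i<l$, $U(-t_n^i)W_n^{l-1}(\cdot+x_n^i)\rightharpoonup0$ in $H^1$. The base case $l-1=i$ is immediate from the definition of $W_n^i$. Given the claim for index $i<j$, write $g_n:=U(-t_n^i)W_n^{j-1}(\cdot+x_n^i)\rightharpoonup0$ and note the identity $U(-t_n^j)W_n^{j-1}(\cdot+x_n^j)=U\big(-(t_n^j-t_n^i)\big)\,g_n\big(\cdot+(x_n^j-x_n^i)\big)$, whose left side converges to $\psi^j\neq0$. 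Applying Lemma \ref{LemmaCaz} with $z_n=g_n$ and parameters $\big(-(t_n^j-t_n^i),\,x_n^j-x_n^i\big)$ forces $|t_n^j-t_n^i|+|x_n^j-x_n^i|\to\infty$, which is \eqref{XT}; once this divergence is known, the continuity argument in the proof of Lemma \ref{LemmaCaz} propagates the weak-limit-zero claim to index $l$, closing the induction.

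The remaining and hardest point is the smallness \eqref{SPWNL}. Here I would invoke an inverse (refined) Strichartz inequality: there are $\theta\in(0,1)$ and $C'$ such that for every bounded sequence in $H^1$ one has $\limsup_n\|U(t)W_n^l\|_{L^{5k/4}_xL^{5k/2}_t}\le C'\,C^{1-\theta}\,\mu_l^{\,\theta}$, that is, the critical Strichartz norm \eqref{STR3} of the linear evolution is controlled by a positive power of the largest extractable weak limit. Combined with $\mu_l\to0$ from the previous step this gives \eqref{SPWNL}. The main obstacle is precisely establishing this inverse estimate: it requires refining the Strichartz bound \eqref{STR3} (via a dyadic/cube frequency decomposition and interpolation against a weak norm that detects a single bubble) so that a non-vanishing Strichartz norm of $U(t)W_n^l$ necessarily produces, after a suitable choice of $(t_n,x_n)$, a nonzero $H^1$ weak limit of size comparable to $\mu_l$; this refinement, rather than the bookkeeping above, is where the real work lies.
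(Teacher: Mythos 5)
Your extraction scheme, Pythagorean expansion, and orthogonality argument (induction plus Lemma \ref{LemmaCaz}) are all sound and match the paper's bookkeeping. However, there is a genuine gap, and you have identified it yourself: the entire analytic content of the theorem is the asymptotic smallness \eqref{SPWNL}, and your proof of it rests on an inverse Strichartz inequality
$\limsup_n\|U(t)W_n^l\|_{L^{5k/4}_xL^{5k/2}_t}\lesssim C^{1-\theta}\mu_l^{\theta}$
which you assert but do not prove, explicitly deferring it as ``where the real work lies.'' A profile decomposition theorem whose key vanishing statement is reduced to an unproven refined-Strichartz estimate is not a proof; everything else in your write-up is standard once that estimate is granted. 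Note also that your scheme (near-maximal weak limits $\mu_l$) makes this estimate unavoidable, since nothing else in your construction ties the size of the Strichartz norm to the size of extractable profiles.

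The paper fills exactly this hole, and does so with machinery far lighter than the dyadic/cube frequency decomposition you anticipate, by exploiting that the problem is $H^1$-subcritical. First, interpolation between the Strichartz bounds \eqref{reg1} and \eqref{STR2} reduces \eqref{SPWNL} to showing $\limsup_n\|U(t)W_n^l\|_{L^\infty_tL^{k+2}_x}\to0$ as $l\to\infty$ (estimates \eqref{vanisstr}--\eqref{vanisstr1}, giving \eqref{UWNL}). Second, instead of choosing profiles by maximizing weak limits, the paper chooses them \emph{where the linear flow is large}: setting $A_l=\limsup_n\|U(t)W_n^{l-1}\|_{L^\infty_tL^{k+2}_x}$, a single low-pass filter $\chi_\gamma$ (with $\gamma$ chosen in terms of $A_l$ and $C_l=\limsup_n\|W_n^{l-1}\|_{H^1}$) together with the interpolation
$\|U(t)(\chi_\gamma\ast\phi)\|_{L^\infty_tL^{k+2}_x}\le\|\phi\|_{L^2}^{2/(k+2)}\|U(t)(\chi_\gamma\ast\phi)\|_{L^\infty_{x,t}}^{k/(k+2)}$
produces points $(t_n^l,x_n^l)$ at which the filtered evolution is pointwise bounded below; evaluating the weak limit against $\chi_\gamma$ (via \eqref{gamma3}) then yields the quantitative lower bound $\|\psi^l\|_{H^1}\ge c\,C_l^{-2/k-2/(k+4)}A_l^{(k+2)/k+2/(k+4)}$. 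This is precisely your inverse estimate, obtained constructively. Combined with the summability $\sum_l\|\psi^l\|_{H^1}^2<\infty$ from \eqref{PYTHA}, it forces $A_l\to0$ and hence \eqref{SPWNL}. If you carried out this two-step interpolation-plus-filter argument, your scheme would close; as written, the theorem's crux is missing.
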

\begin{proof}
First of all, we observe if $\{W^l_n\}_{l, n \in \mathbb{N}}$ is a bounded sequence in $H^1$, by analytic interpolation and inequalities \eqref{reg1} and \eqref{STR2}, we have
\begin{equation}\label{vanisstr}
\begin{split}
\|U(t)W^l_n&\|_{L^{5k/4}_xL^{5k/2}_t}\\ & \lesssim \|D_x^{-1/k}U(t)W^l_n\|_{L^{k}_xL^{\infty}_t}^{2/5} \|D_x^{2/3k}U(t)W^l_n\|_{L^{3k/2}_{x,t}}^{3/5}\\
&\lesssim \|D_x^{-1/k}U(t)W^l_n\|_{L^{k}_xL^{\infty}_t}^{2/5} \|D_x^{1/q}U(t)W^l_n\|_{L^{q}_tL^{k+2}_x}^{2/5}  \|D_x^{1/(k+2)}U(t)W^l_n\|_{L^{k+2}_tL^{k(k+2)/4}_x}^{1/5}\\
&\lesssim \|D_x^{1/q}U(t)W^l_n\|_{L^{q}_tL^{k+2}_x}^{2/5}  \|D_x^{s_k}W^l_n\|_{L^2_x}^{3/5}\\
&\lesssim \|D_x^{1/q}U(t)W^l_n\|_{L^{q}_tL^{k+2}_x}^{2/5},
\end{split}
\end{equation}
where $\frac{2}{q}+\frac{1}{k+2}=\frac{2}{k}$.
In addition, by complex interpolation and Lemma \ref{lemma1}, we have
\begin{equation}\label{vanisstr1}
\begin{split}
 \|D_x^{1/q}U(t)W^l_n\|_{L^{q}_tL^{k+2}_x}&\lesssim \|D_x^{1/q'}U(t)W^l_n\|_{L^{q'}_tL^{k+2}_x}^{\theta} \|U(t)W^l_n\|_{L^{\infty}_tL^{k+2}_x}^{1-\theta}\\
 &\lesssim \|W^l_n\|_{L^{2}_x}^{\theta} \|U(t)W^l_n\|_{L^{\infty}_tL^{k+2}_x}^{1-\theta}\\
 &\lesssim \|U(t)W^l_n\|_{L^{\infty}_tL^{k+2}_x}^{1-\theta},
 \end{split}
 \end{equation}
where $(q',k+2)$ is a $L^2$-admissible Strichartz pair, that is,
$$
\dfrac{2}{q'}+\dfrac{1}{k+2}=\dfrac{1}{2} \quad \textrm{ and } \quad \theta=\dfrac{q'}{q}\in (0,1),\\
$$
So, in view of \eqref{vanisstr} and \eqref{vanisstr1}, for the reminder it will be suffice to show that
\begin{equation}\label{UWNL}
\limsup_{n\rightarrow \infty} \|U(t)W^l_n\|_{L^{\infty}_tL^{k+2}_x} \rightarrow 0,  \peqq \textrm{as} \peqq l\rightarrow \infty.
\end{equation}

Let $\zeta \in C^{\infty}_0(\R)$ such that
\begin{equation*}
\zeta(\xi)=\begin{cases}
1\;, \;\; |\xi|\leq1, \\
0\;, \;\; |\xi|\geq 2.
\end{cases}
\end{equation*}
Given any $\gamma>0$ define the function $\chi_{\gamma}$ through its Fourier transform by $\widehat{\chi}_{\gamma}(\xi)=\zeta(\xi/\gamma)$. Therefore, since $|\widehat{\chi}_{\gamma}(\xi)|\leq 1$, for any $u\in H^1$ and any $0\leq \lambda\leq1$, we obtain
\begin{equation*}
\|\chi_{\gamma}\ast u\|_{\dot{H}^{\lambda}}\leq \|u\|_{\dot{H}^{\lambda}}.
\end{equation*}

We also have
\begin{equation}\label{gamma2}
\|u-\chi_{\gamma}\ast u\|_{\dot{H}^{\lambda}}\leq \gamma^{-(1-\lambda)}\|u\|_{\dot{H}^{1}},
\end{equation}
because
\begin{eqnarray*}
\|u-\chi_{\gamma}\ast u\|^2_{\dot{H}^{\lambda}} &=& \int_{\R}|\xi|^{2\lambda}(1-\widehat{\chi}_{\gamma}(\xi))^2|\widehat{u}(\xi)|^2d \xi\\
&\leq&  \int_{|\xi|\geq \gamma}|\xi|^{2(\lambda-1)}|\xi|^2|\widehat{u}(\xi)|^2d \xi\\
&\leq&  \gamma^{-2(1-\lambda)}\|u\|^2_{\dot{H}^{1}}.
\end{eqnarray*}
Moreover, by Plancherel's identity, we conclude
\begin{equation*}
\begin{split}
|\chi_{\gamma}\ast u(0)|=&\left| \int_{\R} \widehat{\chi}_{\gamma}(\xi) \widehat{u}(\xi)d \xi \right|\leq \left| \int_{|\xi|\leq 2\gamma} \widehat{u}(\xi)d \xi \right|\\
\leq& \|u\|_{\dot{H}^{\lambda}}\left(\int_{|\xi|<2\gamma}|\xi|^{-2\lambda}d \xi\right)^{1/2}\\
\leq& \kappa \gamma^{\frac{1-2\lambda}{2}}\|u\|_{\dot{H}^{\lambda}},
\end{split}
\end{equation*}
for some constant $\kappa>0$.
In particular, for $\lambda=\dfrac{k}{2(k+2)}$,
\begin{equation}\label{gamma3}
|\chi_{\gamma}\ast u(0)|\leq \kappa \gamma^{\frac{1}{k+2}}\|u\|_{\dot{H}^{\frac{k}{2(k+2)}}}.
\end{equation}

On the other hand, in view of Sobolev's embedding, we also have
\begin{equation}\label{gamma4}
\|u\|_{L^{k+2}_x}\leq \beta \|u\|_{\dot{H}^{\frac{k}{2(k+2)}}},
\end{equation}
where the numbers $\kappa$ and $\beta$ are independent of $\gamma$. 

Let $A_1:= \limsup_{n\rightarrow \infty} \|U(t)\phi_n\|_{L^{\infty}_tL^{k+2}_x}$. If $A_1=0$ we may take for all $j,l\geq1$, $\psi^j=0$, $t_n^j=x_n^j=0$, $W_n^l=\phi_n$ and the proof is complete. Suppose now $A_1>0$, without loss of generality we may assume $\|U(t)\phi_n\|_{L^{\infty}_tL^{k+2}_x}\rightarrow A_1$, as $n\rightarrow \infty$. We claim that there exist sequences $\{t^1_n\}_{n \in \mathbb{N}}\subset \R$, $\{x^1_n\}_{n \in \mathbb{N}}\subset \R$ and a function $\psi^1\in H^1$, such that
\begin{equation}\label{Wphi1}
U(-t_n^1)\phi_n(\cdot + x_n^1) \rightharpoonup \psi^1  \peq \textrm{in} \peq H^1, \peq \textrm{as} \peq n\rightarrow \infty
\end{equation}
and
\begin{equation}\label{Wphi2}
\|\psi^1\|_{H^{1}}\geq c(\beta)C_1^{-\frac2k-\frac{2}{k+4}}A_1^{\frac{k+2}{k}+\frac{2}{k+4}},
\end{equation}
where $c(\beta)>0$ is a constant depending only on $\beta$ and $C_1:= \limsup_{n\rightarrow \infty} \|\phi_n\|_{H^{1}}$.

Indeed, since $U(\cdot)$ is an isometry on $\dot{H}^{\frac{k}{2(k+2)}}$, by choosing 
\begin{equation}\label{gamma}
\gamma=\left( \dfrac{4\beta C_1}{A_1}\right)^{\frac{2(k+2)}{k+4}},
\end{equation}
we deduce from \eqref{gamma2} and \eqref{gamma4} that, for $n$ large,
\begin{eqnarray*}
\|U(t)\phi_n-U(t)(\chi_{\gamma}\ast \phi_n)\|_{L^{\infty}_tL^{k+2}_x}&\leq& \beta \|U(t)\phi_n-U(t)(\chi_{\gamma}\ast \phi_n)\|_{L^{\infty}_t\dot{H}^{\frac{k}{2(k+2)}}}\\
&\leq& 2 \beta \gamma^{-(1-\frac{k}{2(k+2)})} C_1 \leq A_1/2.
\end{eqnarray*}
Thus,  for $n$ large,
$$
\|U(t)(\chi_{\gamma}\ast \phi_n)\|_{L^{\infty}_tL^{k+2}_x}\geq A_1/4.
$$

By complex interpolation and Strichartz estimates (Lemma \ref{lemma1}), we conclude
\begin{eqnarray*}
\|U(t)(\chi_{\gamma}\ast \phi_n)\|_{L^{\infty}_tL^{k+2}_x}&\leq& \|U(t)(\chi_{\gamma}\ast \phi_n)\|_{L^{\infty}_tL^{2}_x}^{2/(k+2)}
\|U(t)(\chi_{\gamma}\ast \phi_n)\|_{L^{\infty}_{x,t}}^{k/(k+2)}\\
&\leq& \|\phi_n\|_{L^{2}_x}^{2/(k+2)}\|U(t)(\chi_{\gamma}\ast \phi_n)\|_{L^{\infty}_{x,t}}^{k/(k+2)}\\
&\leq& (2C_1)^{2/(k+2)}\|U(t)(\chi_{\gamma}\ast \phi_n)\|_{L^{\infty}_{x,t}}^{k/(k+2)}.
\end{eqnarray*}
Combining these last two inequalities, we obtain for $n$ large
$$
\|U(t)(\chi_{\gamma}\ast \phi_n)\|_{L^{\infty}_{x,t}}\geq \left( \dfrac{A_1}{4}\right)^{\frac{k+2}{k}}\left( 2C_1\right)^{-\frac{2}{k}},
$$
from which it follows that there exist sequences $\{t^1_n\}_{n \in \mathbb{N}}\subset \R$ and  $\{x^1_n\}_{n \in \mathbb{N}}\subset \R$ such that
\begin{equation}\label{Wphi3}
|U(t_n^1)(\chi_{\gamma}\ast \phi_n)(x_n^1)| =\left| \int U(t_n^1)\phi_n(x_n^1-y)\chi_{\gamma}(y) dy\right|\geq \dfrac{1}{2}\left( \dfrac{A_1}{4}\right)^{\frac{k+2}{k}}\left( 2C_1\right)^{-\frac{2}{k}}.
\end{equation}
Now define  $\omega_n(\cdot) = U(t_n^1)\phi_n(\cdot +x_n^1)$. It is clear that $\|\phi_n\|_{H^{1}}=\|\omega_n\|_{H^{1}}$, therefore $\{\omega_n\}_{n \in \mathbb{N}}$ is uniformly bounded in $H^1$. Passing to a subsequence if necessary, there exists a function $\psi^1$ such that $\omega_n  \rightharpoonup \psi^1$ in $H^1$. Moreover, by \eqref{Wphi3} and \eqref{gamma3},
\begin{eqnarray*}
\dfrac{1}{2}\left( \dfrac{A_1}{4}\right)^{\frac{k+2}{k}}\left( 2C_1\right)^{-\frac{2}{k}}\leq \left| \int \psi^1(y)\chi_{\gamma}(-y) dy\right|=
|\chi_{\gamma}\ast \psi^1(0)|\leq  \kappa \gamma^{\frac{1}{k+2}}\|\psi^1\|_{{H}^{1}}.
\end{eqnarray*}
By using our choice of $\gamma$ in \eqref{gamma}, we conclude inequality \eqref{Wphi2}. 

Now, let $W_n^1=\phi_n-U(-t_n^1)\psi^1(\cdot -x_n^1)$. Given any $0\leq \lambda \leq 1$, it follows that
\begin{equation}\label{WN1}
\begin{split}
 \|W_n^1\|^2_{\dot{H}^{\lambda}}&= \|\phi_n\|^2_{\dot{H}^{\lambda}}+ \|U(-t_n^1)\psi^1(\cdot -x_n^1)\|^2_{\dot{H}^{\lambda}}-2\Re(\phi_n,U(-t_n^1)\psi^1(\cdot -x_n^1))_{\dot{H}^{\lambda}} \\
 &= \|\phi_n\|^2_{\dot{H}^{\lambda}}+ \|\psi^1\|^2_{\dot{H}^{\lambda}}-2\Re(\omega_n,\psi^1)_{\dot{H}^{\lambda}}.
 \end{split}
\end{equation}
Taking the limit, as $n\to\infty$, we obtain
$$
\limsup_{n\rightarrow \infty} \|W^1_n\|_{\dot{H}^{\lambda}}=\limsup_{n\rightarrow \infty} \|\phi_n\|_{\dot{H}^{\lambda}}-\|\psi^1\|^2_{\dot{H}^{\lambda}}.
$$
The above limit also implies 
$$C_2:= \limsup_{n\rightarrow \infty} \|W^1_n\|_{H^{1}}\leq C_1= \limsup_{n\rightarrow \infty} \|\phi_n\|_{H^{1}}.$$

Let $A_2:= \limsup_{n\rightarrow \infty} \|U(t)W^1_n\|_{L^{\infty}_tL^{k+2}_x}$. If $A_2=0$, there is nothing to prove. Again, the only case we need to consider is $A_2>0$. Repeating the above procedure, with $\phi_n$ replaced by $W^1_n$ we can find sequences $\{t^2_n\}_{n \in \mathbb{N}}\subset \R$, $\{x^2_n\}_{n \in \mathbb{N}}\subset \R$ and a function $\psi^2\in H^1$, such that
\begin{equation*}
U(t_n^2)W^1_n(\cdot + x_n^2) \rightharpoonup \psi^2  \peq \textrm{in} \peq H^1, \peq \textrm{as} \peq n\rightarrow \infty,
\end{equation*}
and
\begin{equation*}
\|\psi^2\|_{H^{1}}\geq c(\beta)C_2^{-\frac{2}{k}-\frac{2}{k+4}}A_2^{\frac{k+2}{k}+\frac{2}{k+4}}.
\end{equation*}
Let $z_n=U(t_n^1)W^1_n(\cdot + x_n^1)$, therefore by \eqref{Wphi1}, $z_n \rightharpoonup 0$ in $H^1$. Moreover,
$$
U(t^2_n-t^1_n)z_n(\cdot+ x_n^2-x^1_n)=U(t_n^2)W^1_n(\cdot + x_n^2) \rightharpoonup \psi^2  \peq \textrm{in} \peq H^1,
$$
by construction. Thus, from Lemma \ref{LemmaCaz}, we conclude that
\begin{equation*}
\lim_{n\rightarrow \infty} |t_n^1-t_n^2|+|x_n^1-x_n^2|=\infty.
\end{equation*}

Set $W_n^2=W_n^1-U(-t_n^2) \psi^2(\cdot-x_n^2)$. By the same argument as the one used in \eqref{WN1} we have for all $0\leq \lambda \leq 1$
\begin{eqnarray*}
 \limsup_{n\rightarrow \infty} \|W_n^2\|^2_{\dot{H}^{\lambda}} &=& \limsup_{n\rightarrow \infty}\|W_n^1\|^2_{\dot{H}^{\lambda}}- \|\psi^2\|^2_{\dot{H}^{\lambda}}\\
 &=&\limsup_{n\rightarrow \infty}\|\phi_n\|^2_{\dot{H}^{\lambda}}- \|\psi^1\|^2_{\dot{H}^{\lambda}} - \|\psi^2\|^2_{\dot{H}^{\lambda}}.
\end{eqnarray*}
Therefore, relation \eqref{PYTHA} holds in this case, which again implies
$$C_3:= \limsup_{n\rightarrow \infty} \|W^2_n\|_{H^{1}}\leq C_1= \limsup_{n\rightarrow \infty} \|\phi_n\|_{H^{1}}.$$

Next, we construct the functions $\psi^j$, $j> 2$ inductively, applying the procedure described above to the sequences $\{W^{j-1}_n\}_{n\in \mathbb{N}}$. Let $l>2$.  By assuming that $\psi^j$, $x_n^j$, $t^j_n$ and $W_n^j$ are known for $j\in \{1,\dots,l-1\}$, we consider
$$
A_l:= \limsup_{n\rightarrow \infty} \|U(t)W^{l-1}_n\|_{L^{\infty}_tL^{k+2}_x}.
$$ 
If $A_l=0$ we are done. Assume $A_l>0$ and apply the above procedure to the sequence $\{W_n^{l-1}\}_{n\in \mathbb{N}}$ to obtain, passing to a subsequence if necessary, sequences  $\{t^l_n\}_{n \in \mathbb{N}}\subset \R$, $\{x^l_n\}_{n \in \mathbb{N}}\subset \R$ and a function $0\neq \psi^l\in H^1$ (if $\psi^l=0$ the result is trivial), such that
\begin{equation}\label{Wphi1l}
U(t_n^l)W^{l-1}_n(\cdot + x_n^l) \rightharpoonup \psi^l  \peq \textrm{in} \peq H^1, \peq \textrm{as} \peq n\rightarrow \infty
\end{equation}
and
\begin{equation*}
\|\psi^l\|_{H^{1}}\geq c(\beta)C_l^{-\frac{2}{k}-\frac{2}{k+4}}A_l^{\frac{k+2}{k}+\frac{2}{k+4}},
\end{equation*}
where $C_l:= \limsup_{n\rightarrow \infty} \|W^{l-1}_n\|_{H^{1}}$.

Next we prove \eqref{PYTHA} and \eqref{XT} by induction. First, assume that \eqref{PYTHA} and \eqref{XT} holds for $j, k \in \{1,\dots,l-1\}$. Let $j \in \{1,\dots,l-1\}$. By definition,
$$
W_n^{l-1}=W_n^{j-1} - U(-t_n^j)\psi^j(\cdot-x_n^j)-U(-t_n^{j+1})\psi^{j+1}(\cdot-x_n^{j+1})-\cdots - U(-t_n^{l-1})\psi^{l-1}(\cdot-x_n^{l-1}).
$$
Therefore, applying the operator $U(t_n^j)$ and the shift $x_n^j$, we obtain
\[
\begin{split}
U(t_n^j)W_n^{l-1}(\cdot+x_n^j)= U(t_n^j)W_n^{j-1}(\cdot+x_n^j) - \psi^j 
-\sum_{k=j+1}^{l-1} U(t_n^j-t_n^k)\psi^k(\cdot+x_n^j-x_n^k).
\end{split}
\]
The first term on the r.h.s goes to zero, weakly in $H^1$, by definition and the same happens with the sum by our induction hypotheses. Thus, $U(-t_n^j)W_n^{l-1}(\cdot+x_n^j) \rightharpoonup 0$ in $H^1$, as $n\rightarrow \infty$. By applying Lemma \ref{LemmaCaz}, we deduce that
\begin{equation*}
\lim_{n\rightarrow \infty} |t_n^j-t_n^l|+|x_n^j-x_n^l|=\infty.
\end{equation*}

Next, we prove  \eqref{PYTHA}. Recall that $W_n^l=W^{l-1}_n-U(-t_n^l)\psi_l(\cdot -x_n^l)$. Assume that relation \eqref{PYTHA} holds at rank $l-1$. Let $0\leq \lambda \leq 1$, using the weak convergence \eqref{Wphi1l} and expanding
$$
 \|W_n^l\|^2_{\dot{H}^{\lambda}}= \|U(t_n^l)W^l_n(\cdot +x_n^l)\|^2_{\dot{H}^{\lambda}}=\|U(t_n^l)W^{l-1}_n(\cdot +x_n^l)-\psi_l\|^2_{\dot{H}^{\lambda}}
$$
as an inner product it is easy to conclude \eqref{PYTHA} at rank $k=l$.

Finally, we consider the smallness property \eqref{SPWNL}. Since $C_l\leq C_1$, for all $l\in \mathbb{N}$, we have
$$
\sum_{l=1}^{\infty} (A_l)^{\frac{2(k+2)}{k}+\frac{4}{k+4}}\lesssim \sum_{l=1}^{\infty} (C_l)^{\frac{2}{k}+\frac{2}{k+4}}\|\psi^l\|^2_{H^1}\lesssim(C_1)^{\frac{2}{k}+\frac{2}{k+4}} \limsup_{n\rightarrow \infty} \|\phi_n\|^2_{H^{1}_x}<\infty.
$$
Hence $A_l\rightarrow 0$, as $l\rightarrow \infty$ (giving \eqref{UWNL}), and the proof is complete.
\end{proof}

\begin{lemma}[Energy Pythagorean expansion] Under the hypotheses of Theorem \ref{profdec}, for any $l\geq1$,
\begin{equation}\label{EPEx}
E[\phi_n]-\sum_{j=1}^{l}E[U(t_n^j)\psi^j(\cdot-x^j_n)] - E[W_n^l], \rightarrow 0,  \peqq \textrm{as} \peqq n\rightarrow \infty.
\end{equation}
\end{lemma}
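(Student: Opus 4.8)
The plan is to establish the energy expansion \eqref{EPEx} by treating separately the kinetic (gradient) and potential (nonlinear) parts of the energy functional \eqref{EC}. Recalling that
\[
E[v]=\frac{1}{2}\|\partial_x v\|^2_{L^2_x}+\frac{1}{k+2}\int v^{k+2}\,dx,
\]
the kinetic term is handled immediately by the Pythagorean expansion \eqref{PYTHA} of Theorem \ref{profdec} applied with $\lambda=1$: since $\|\partial_x v\|^2_{L^2_x}=\|v\|^2_{\dot{H}^1}$ and since $U(t)$ is an isometry on $\dot{H}^1$ (so that $\|\partial_x U(t_n^j)\psi^j(\cdot-x_n^j)\|^2_{L^2_x}=\|\psi^j\|^2_{\dot{H}^1}$), relation \eqref{PYTHA} at $\lambda=1$ gives directly
\[
\frac{1}{2}\|\partial_x\phi_n\|^2_{L^2_x}-\sum_{j=1}^{l}\frac{1}{2}\|\partial_x U(t_n^j)\psi^j(\cdot-x_n^j)\|^2_{L^2_x}-\frac{1}{2}\|\partial_x W_n^l\|^2_{L^2_x}\to 0.
\]

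The substantive part is the potential term, i.e.\ proving the $L^{k+2}$ expansion
\[
\int\phi_n^{k+2}\,dx-\sum_{j=1}^{l}\int\big(U(t_n^j)\psi^j(\cdot-x_n^j)\big)^{k+2}\,dx-\int (W_n^l)^{k+2}\,dx\to 0,\qquad n\to\infty.
\]
Here I would \emph{not} use \eqref{PYTHA} directly (an $\dot{H}^\lambda$ orthogonality does not translate into orthogonality in the nonlinear $L^{k+2}$ integral); instead the natural tool is the asymptotic orthogonality of the profiles encoded in the divergence property \eqref{XT} together with the smallness of the remainder in an $L^{k+2}$-controlling norm. The strategy is the standard elementary-inequality argument: writing $\phi_n=\sum_{j=1}^{l}g_n^j+W_n^l$ with $g_n^j:=U(t_n^j)\psi^j(\cdot-x_n^j)$, one uses the pointwise bound
\[
\Big|\,\Big(\sum_{j=1}^{l}a_j+b\Big)^{k+2}-\sum_{j=1}^{l}a_j^{k+2}\Big|\lesssim \sum_{i\neq j}|a_i|\,|a_j|^{k+1}+|b|\sum_{j}|a_j|^{k+1}+|b|^{k+2}
\]
(valid since $k+2$ is an even integer, so the power is a genuine polynomial expansion), integrates, and shows each family of terms tends to $0$.

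The cross terms $\int|g_n^i|\,|g_n^j|^{k+1}\,dx$ for $i\neq j$ are the main obstacle and the heart of the proof: these must vanish as $n\to\infty$ by virtue of \eqref{XT}. The idea is that $g_n^i$ and $g_n^j$ are asymptotically supported in disjoint regions of space-time whenever $|t_n^i-t_n^j|+|x_n^i-x_n^j|\to\infty$; concretely I would first prove the vanishing for $\psi^i,\psi^j$ in a dense class (say Schwartz functions with compactly supported Fourier transform), where the dispersive decay of Lemma \ref{lemma0} and an explicit change of variables make the spatial/temporal separation manifest, and then pass to general $H^1$ profiles by density and the uniform $H^1$ bounds, using Hölder and Sobolev embedding $H^1\hookrightarrow L^\infty$ to control the remaining factors. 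The remainder terms involving $W_n^l$ require the $L^{k+2}$-smallness of $U(t)W_n^l$; this follows from \eqref{UWNL}, established in the proof of Theorem \ref{profdec}, since $\|U(t)W_n^l\|_{L^\infty_t L^{k+2}_x}$ controls $\|W_n^l\|_{L^{k+2}_x}$ at $t=0$, and combined with the uniform $H^1$ bounds and Hölder this forces the mixed and pure $W_n^l$ contributions to vanish in the iterated limit $\lim_{l\to\infty}\limsup_{n\to\infty}$. Assembling the kinetic identity with the potential expansion yields \eqref{EPEx}.
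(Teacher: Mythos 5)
Your overall architecture matches the paper's: reduce to the potential term via \eqref{PYTHA} with $\lambda=1$ (the kinetic part is handled identically), then expand $\phi_n^{k+2}$ and kill cross terms using the orthogonality \eqref{XT} together with dispersive decay, and invoke the $L^{k+2}$-smallness of the remainder coming from \eqref{UWNL}. Your treatment of the profile--profile cross terms is also essentially the paper's: it splits the profiles into those with bounded $t_n^j$, for which \eqref{XT} forces $|x_n^i-x_n^j|\to\infty$ and spatial separation does the job, and those with $|t_n^j|\to\infty$, whose $L^{k+2}$ norms vanish individually by the decay estimate of Lemma \ref{lemma0} plus a density argument.

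However, there is a genuine gap in your handling of the cross terms coupling $W_n^l$ with the profiles. The lemma asserts that for \emph{every fixed} $l\ge 1$ the expression tends to $0$ as $n\to\infty$, whereas you estimate the mixed terms $\int |W_n^l|^{m}\,|U(t_n^j)\psi^j(\cdot-x_n^j)|^{k+2-m}\,dx$ by H\"older against powers of $\|W_n^l\|_{L^{k+2}}$; but \eqref{UWNL} (equivalently \eqref{I1a}) only makes this small in the iterated limit $\lim_{l\to\infty}\limsup_{n\to\infty}$ --- for a fixed $l$ (say $l=1$), $\limsup_n\|W_n^l\|_{L^{k+2}}$ may well be bounded away from zero. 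So your argument proves only the weaker iterated-limit statement, not \eqref{EPEx}; and the fixed-$l$ version (indeed $l=1$) is precisely what is used later in the paper (in Lemma \ref{Lemma} and Proposition \ref{Precomp}), so the weakening is not harmless. The paper closes this gap with an auxiliary-index device absent from your proposal: given $\varepsilon>0$, fix $l_1>l$ so large that $\limsup_n\|W_n^{l_1}\|_{L^{k+2}}$ is small, and compare everything at level $l_1$, using $\phi_n-W_n^{l_1}=\sum_{j=1}^{l_1}U(t_n^j)\psi^j(\cdot-x_n^j)$ and $W_n^{l_1}-W_n^{l}=\sum_{j=l+1}^{l_1}U(t_n^j)\psi^j(\cdot-x_n^j)$. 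Replacing $\phi_n$ and $W_n^l$ by these pure sums of profiles costs only terms controlled by $\|W_n^{l_1}\|_{L^{k+2}}$ (the quantity $I_1$, estimated exactly by your elementary inequality), and the remaining comparison (the quantity $I_2$) involves \emph{only} profile--profile cross terms, which vanish as $n\to\infty$ for the fixed $l_1$ by \eqref{PYTHA3}. In this way $W_n^l$ is never paired directly against the first $l$ profiles: it is itself decomposed into further profiles, orthogonal to them by \eqref{XT}, plus a remainder that is genuinely small in norm. (One could alternatively try to rescue a direct fixed-$l$ argument by exploiting that the relevant translates of $W_n^l$ converge weakly to $0$ in $H^1$, hence uniformly on compact sets in one dimension; but that is a different argument from the one you propose, and your H\"older-based estimate cannot be repaired without some such new ingredient.)
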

\begin{proof}
By \eqref{PYTHA} it suffices to prove that for any $l\geq1$,
\begin{equation}\label{PYTHA2}
\int\phi_n^{k+2}(x)dx-\sum_{j=1}^{l}\int(U(t_n^j)\psi^j)^{k+2}(x) dx - \int(W_n^l)^{k+2}(x)dx \rightarrow 0,  \peqq \textrm{as} \peqq n\rightarrow \infty.
\end{equation}

First, we claim that for a fixed $l\geq1$,
\begin{equation}\label{PYTHA3}
\int\left(\sum_{j=1}^{l}U(t_n^j)\psi^j(x-x_n^j)\right)^{k+2}\!\!\!\!\!\!\!\!\!\!dx-\sum_{j=1}^{l}\int(U(t_n^j)\psi^j)^{k+2}(x) dx \rightarrow 0,  \peqq \textrm{as} \peqq n\rightarrow \infty.
\end{equation}
By reindexing and passing to a subsequence if necessary, without loss of generality, there exists $l_0\leq l$ such that
\begin{itemize}
\item [(i)] For $1\leq j \leq l_0$, the sequence $\{t_n^j\}_{n\in \mathbb{N}}$ is bounded, which implies 
$$t_n^j\rightarrow \bar{t}^j \in \R, \peqq \textrm{as} \peqq n\rightarrow \infty.$$

\item [(ii)] For $l_0+1\leq j\leq l$, we have that 
$$|t_n^j|\rightarrow \infty,  \peqq \textrm{as} \peqq n\rightarrow \infty.$$
\end{itemize}
Note that in case $(\textrm{ii})$, we have for all $l_0+1\leq j\leq l$
\begin{equation}\label{LIM2}
\lim_{n\rightarrow \infty}\int(U(t_n^j)\psi^j)^{k+2}(x) dx=0.
\end{equation}
Indeed, for $\psi \in L^{\frac{2(2k+4)}{3k+4}}_x\cap \dot{H}^{\frac{1}{2}-\frac{1}{k+2}}$, we have by Sobolev's embedding and Lemma \ref{lemma0}
\begin{equation*}
\begin{split}
\|U(t)\psi^j\|_{L^{k+2}}&=\|U(t)(\psi^j-\psi)\|_{L^{k+2}}+\|U(t)\psi\|_{L^{k+2}}\\
&\lesssim \|\psi^j-\psi\|_{\dot{H}^{\frac{1}{2}-\frac{1}{k+2}}} + \|D_x^{\frac{1}{r} -\frac{1}{k+2}}U(t)\psi\|_{L^{r}}\\
&\lesssim \|\psi^j-\psi\|_{\dot{H}^{\frac{1}{2}-\frac{1}{k+2}}} + |t|^{-\frac{k}{4(k+2)}}\|\psi\|_{L^{\frac{2(2k+4)}{3k+4}}},
\end{split}
\end{equation*}
where $r=\frac{4(k+2)}{k+4}$. Therefore, a density argument shows \eqref{LIM2}. 

On the other hand, in case $(\textrm{i})$, \eqref{XT} implies $\lim_{n\rightarrow \infty} |x_n^i-x_n^j|=\infty$, for $1\leq i < j\leq l_0$. Therefore all the cross terms in the expansion of 
$$\int_{\R}\left(\sum_{j=1}^{l}U(t_n^j)\psi^j(x-x_n^j)\right)^{k+2}\!\!\!\!\!\!\!\!\!\!dx$$ 
goes to zero as $n\to\infty$, which implies \eqref{PYTHA3}.

Next, by \eqref{PYTHA}, we conclude that $\{W_n^l\}_{n\in \mathbb{N}}$ is bounded in $H^1$, since the sequence $\{\phi_n\}_{n\in \mathbb{N}}$ has this property by hypotheses. Therefore, by Sobolev's embedding both sequences are uniformly bounded in $L^{k+2}(\R)$. Moreover, by using \eqref{UWNL} and the fact that  $\|W^l_n\|_{L^{k+2}_x}\leq \|U(t)W^l_n\|_{L_t^{\infty}L^{k+2}_x}$, we also obtain
\begin{equation}\label{I1a}
\limsup_{n\rightarrow \infty} \|W^l_n\|_{L^{k+2}_x} \rightarrow 0,  \peqq \textrm{as} \peqq l\rightarrow \infty.
\end{equation}

Now observe that 
\begin{eqnarray*}
\left|\int\phi_n^{k+2}(x)dx-\sum_{j=1}^{l}\int(U(t_n^j)\psi^j)^{k+2}(x) dx - \int(W_n^l)^{k+2}(x)dx\right|\leq I_1+I_2,
\end{eqnarray*}
where, for some $l_1\geq1$,
$$
I_1=\left|\int(\phi_n-W_n^{l_1})^{k+2}dx- \int\phi_n^{k+2}dx\right| +\left|\int(W^l_n-W_n^{l_1})^{k+2}dx- \int(W_n^l)^{k+2}dx\right|,
$$
$$
I_2=\left| \int(\phi_n-W_n^{l_1})^{k+2}dx-\sum_{j=1}^{l}\int(U(t_n^j)\psi^j)^{k+2} dx - \int(W^l_n-W_n^{l_1})^{k+2}dx\right|.
$$

We first estimate $I_1$. By recalling the inequality
$$
\big||a+b|^{k+2}-|a|^{k+2}\big|\lesssim |a||b|^{k+1}+|a|^{k+1}|b|+|b|^{k+2},
$$
a simple calculation and H\"older's inequality imply
\begin{eqnarray}\label{I1b}
I_1&\lesssim & \left(\int_{\R}|\phi_n|^{k+1}|W_n^{l_1}| + |W_n^{l_1}|^{k+1}|\phi_n|  +|W_n^{l}|^{k+1}|W_n^{l_1}| + |W_n^{l_1}|^{k+1}|W_n^{l}| dx    \right)\nonumber \\
&&+ 2\|W_n^{l_1}\|^{k+2}_{L^{k+2}}\nonumber \\
&\lesssim & \big(\sup_{n}\|\phi_n\|^{k+1}_{L^{k+2}}+\sup_n\|W_n^{l}\|^{k+1}_{L^{k+2}}\big)\|W_n^{l_1}\|_{L^{k+2}}\nonumber \\
&&+\big(\sup_{n}\|\phi_n\|_{L^{k+2}}+\sup_n\|W_n^{l}\|_{L^{k+2}}\big)\|W_n^{l_1}\|^{k+1}_{L^{k+2}} + 2\|W_n^{l_1}\|^{k+2}_{L^{k+2}}.
\end{eqnarray}

Therefore, combining  \eqref{I1a} and \eqref{I1b}, given any $\varepsilon>0$ there exist natural numbers $l_1>l$ and $n_1$ such that
\begin{eqnarray*}
I_1\leq \varepsilon, \peq \textrm{for all} \peq n\geq n_1.
\end{eqnarray*}

Next we consider  $I_2$. By definition, we have
$$
\phi_n-W_n^{l_1}=\sum_{j=1}^{l_1}U(t_n^j)\psi^j(\cdot-x_n^j)
$$
and
$$
W_n^{l_1}-W_n^{l}=\sum_{j=l+1}^{l_1}U(t_n^j)\psi^j(\cdot-x_n^j).
$$
However, by the proof of \eqref{PYTHA3}, there exists $n_2\in\mathbb{N}$ large enough such that
\begin{eqnarray*}
I_2& \leq & \left| \int(\phi_n-W_n^{l_1})^{k+2}dx-\sum_{j=1}^{l_1}\int(U(t_n^j)\psi^j)^{k+2} dx\right| \nonumber \\
&&+\left| \sum_{j=l+1}^{l_1}\int(U(t_n^j)\psi^j)^{k+2} dx - \int(W^l_n-W_n^{l_1})^{k+2}dx\right|\nonumber \\
& \leq & \varepsilon, \peq \textrm{for all} \peq n\geq n_2.
\end{eqnarray*}

Finally, taking $n\geq \max\{n_1,n_2\}$ we see that $I_1+I_2\leq2\varepsilon$. This concludes the proof of \eqref{PYTHA2}.
\end{proof}


\section{Existence and precompactness of a critical solution}\label{sec5}

In this section we will construct a critical solution in a sense given below. We follow the exposition in Kenig and Merle \cite{kenig-merle2}. We start with the following definitions.
\begin{definition}
For $A>0$, define
\begin{equation*}
B(A)=\left\{
 u_0\in H^1: M[u_0]+E[u_0]\leq A
\right\}.
\end{equation*}
\end{definition}

\begin{definition}
We shall say that $SC(A)$ holds if for each $u_0\in B(A)$, the corresponding solution $u(t)$ of \eqref{gkdv} is global, that is, $T_+(u_0)=+\infty$ and 
$$
\|u\|_{L^{5k/4}_xL^{5k/2}_t}<\infty.
$$
\end{definition}

Note that if $k$ is even then, by Theorem \ref{global5}, $T_+(u_0)=+\infty$ and $\sup_{t\in [0,+\infty)}\|u(t)\|_{H^1}<(M[u_0]+2E[u_0])^{1/2}<+\infty$ for all $u_0\in H^1$. Moreover, if  $\|u\|_{L^{5k/4}_xL^{5k/2}_t}<\infty$, then Proposition \ref{PROPSCAT} implies that $u$ scatters. Also, the small data theory (Proposition \ref{SDGT}) asserts that there exists $A_0>0$ such that $SC(A)$ holds for all $A\leq A_0$. Indeed, this is a consequence of the following inequality
$$
\| U(t) u_{0} \|_{L^{5k/4}_{x} L^{5k/2}_{t}}\leq \|u_0\|_{\dot{H}^{s_k}}\leq \|u_0\|_{{H}^{1}}\leq 2^{1/2} (M[u_0]+E[u_0])^{1/2},
$$
where we have used Lemma \ref{lemma12} and inequality \eqref{Apriori}.

Our goal now is to show that $SC(A)$ holds for any $A>0$, which is equivalent to Theorem \ref{global4}. If it fails, then there exists a critical number $A_C>A_0$ such that if $A<A_C$, $SC(A)$ holds, but if $A>A_C$, $SC(A)$ fails; more precisely,
$$
A_C=\sup\left\{A: \; u_0\in B(A) \;\Rightarrow \;\|u\|_{L^{5k/4}_xL^{5k/2}_t}<\infty \right\}.
$$
Thus, we can find $A_n\searrow A_C$ and a sequence of initial data $u_{n,0}\in H^1$ such that $u_{n,0}\in B(A_n)$. In this case, denoting by $u_n$ the corresponding solution of \eqref{gkdv}, we have that
$$
M[u_{n,0}]+E[u_{n,0}]\leq A_n
$$
and
\begin{equation}\label{uninfty}
\|u_n\|_{L^{5k/4}_xL^{5k/2}_{t}}=\infty,
\end{equation}
for all $n\in \mathbb{N}$. 

The next lemma is the main tool to construct a critical solution that does not scatters. It says that under certain conditions the profile decomposition given by Theorem \ref{profdec} contains at most one zero element.

\begin{lemma}\label{Lemma}
Suppose that $A_C<\infty$. Let $\{a_n\}_{n\in\N}$ such that $a_n\rightarrow A_C$, as $n\rightarrow +\infty$, and $\{\phi_n\}\subset H^1$ satisfying, for every $n\in \N$, $\phi_n\in B(a_n)$,
\begin{equation}\label{phininfty}
\|\textrm{KdV}(t)\phi_n\|_{L^{5k/4}_xL^{5k/2}_{t}}=\infty,
\end{equation}
where $\{\textrm{KdV}(t)\}_{t\in \R}$ denotes the flow of the generalized KdV equation \eqref{gkdv}. Then, up to a subsequence, there exist a function $\psi\in  H^1$ and sequences  $\{W_n\}_{n \in \mathbb{N}}\subset H^1$, $\{t_n\}_{ n \in \mathbb{N}}\subset \R$ and $\{x_n\}_{ n \in \mathbb{N}}\subset \R$, such that for every $n\geq1$
\begin{equation*}
\phi_n=U(t_n)\psi(\cdot-x_n) + W_n,
\end{equation*}
and
\begin{equation}\label{wnzero}
\lim_{n\rightarrow \infty}\|U(t)W_n\|_{L^{5k/4}_xL^{5k/2}_{t}}=0.
\end{equation}
\end{lemma}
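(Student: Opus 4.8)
The plan is to feed the sequence $\{\phi_n\}$ into the profile decomposition of Theorem \ref{profdec} and then to show, via the perturbation theory of Proposition \ref{LTPT} and the very definition of the critical threshold $A_C$, that all but one of the linear profiles must vanish. First I would note that $\{\phi_n\}$ is bounded in $H^1$: since $\phi_n\in B(a_n)$ and $a_n\to A_C<\infty$, the quantities $M[\phi_n]+E[\phi_n]$ are bounded, and hence so is $\|\phi_n\|_{H^1}$ in view of Theorem \ref{global5}. Applying Theorem \ref{profdec} gives profiles $\psi^j$, parameters $t_n^j\to\bar t^j$ and $x_n^j$, and remainders $W_n^l$ enjoying the orthogonality \eqref{XT}, the Pythagorean relation \eqref{PYTHA}, and the Strichartz smallness \eqref{SPWNL}. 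Combining the $\lambda=0$ case of \eqref{PYTHA} (mass) with the energy expansion \eqref{EPEx}, and using that in the defocusing case with $k$ even both $M$ and $E$ are nonnegative, I obtain for each $l$ that
\[
\sum_{j=1}^{l}\lim_{n\to\infty}\Big(M[U(t_n^j)\psi^j]+E[U(t_n^j)\psi^j(\cdot-x_n^j)]\Big)+\lim_{n\to\infty}\big(M[W_n^l]+E[W_n^l]\big)\le A_C,
\]
with every summand $\ge 0$. Write $A^j\ge0$ for the $j$-th limit. Letting $v^j$ be the nonlinear profile associated with $(\psi^j,\{t_n^j\})$ (Definition \ref{NLP}, global since $k$ is even by Remark \ref{4.2}), conservation of mass and energy together with $\|v^j(t_n^j)-U(t_n^j)\psi^j\|_{H^1}\to0$ yield $M[v^j]+E[v^j]=A^j$.

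Next I argue by contradiction that at most one profile is nonzero. Suppose $\psi^{1},\psi^{2}$ are both nonzero; then $A^{1},A^{2}>0$, so the displayed inequality forces $A^j<A_C$ for \emph{every} $j$. Hence each $v^j\in B(A^j)$ with $A^j<A_C$, so $SC(A^j)$ holds and $\|v^j\|_{L^{5k/4}_xL^{5k/2}_t}<\infty$. Fixing $l$ large and setting $v^j_n(t,x):=v^j(t+t_n^j,x-x_n^j)$, I would form the approximate solution
\[
\widetilde u_n(t)=\sum_{j=1}^{l}v^j_n(t)+U(t)W_n^l,
\]
which solves \eqref{gkdv} with source $\partial_x e_n$, where $e_n=\sum_j (v^j_n)^{k+1}-\widetilde u_n^{\,k+1}$. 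By \eqref{STR3} and the nonlinear profile property, $\|U(t)(\phi_n-\widetilde u_n(0))\|_{L^{5k/4}_xL^{5k/2}_t}\lesssim\|\phi_n-\widetilde u_n(0)\|_{H^1}\to0$, while $\sup_t\|\widetilde u_n\|_{H^1}$ and $\|\widetilde u_n\|_{L^{5k/4}_xL^{5k/2}_t}$ are bounded uniformly in $n$ (finitely many scattering profiles plus the remainder). Thus, once the smallness of $e_n$ in the norms $\|D^{s_k}_x\cdot\|_{L^1_xL^2_t}+\|D_x\cdot\|_{L^1_xL^2_t}+\|\cdot\|_{L^1_xL^2_t}$ is established, Proposition \ref{LTPT} applies and gives $\|u_n\|_{L^{5k/4}_xL^{5k/2}_t}<\infty$ for $n$ large, contradicting \eqref{phininfty}. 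Therefore at most one profile is nonzero.

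The case of \emph{no} nonzero profile is excluded as well: then $W_n^l=\phi_n$ and \eqref{SPWNL} gives $\|U(t)\phi_n\|_{L^{5k/4}_xL^{5k/2}_t}\to0$, so Proposition \ref{SDGT} makes $u_n$ scatter for large $n$, again contradicting \eqref{phininfty}. Hence exactly one profile survives: set $\psi:=\psi^1\ne0$, $t_n:=t_n^1$, $x_n:=x_n^1$, $W_n:=W_n^1$, so that $\phi_n=U(t_n)\psi(\cdot-x_n)+W_n$. Since all later profiles vanish, the decomposition algorithm terminates at $l=1$, i.e. $\limsup_{n\to\infty}\|U(t)W_n\|_{L^\infty_tL^{k+2}_x}=0$; combining \eqref{vanisstr} and \eqref{vanisstr1} then upgrades this to $\lim_{n\to\infty}\|U(t)W_n\|_{L^{5k/4}_xL^{5k/2}_t}=0$, which is exactly \eqref{wnzero}.

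The step I expect to be the main obstacle is the smallness of the error $e_n$. Expanding $e_n$, every surviving term is a product in which at least two distinct factors appear; the genuinely cross interactions among different $v^j_n$ must be shown to vanish as $n\to\infty$ using the pairwise divergence \eqref{XT} of the $(t_n^j,x_n^j)$, which separates the profiles either in space or in time, while the interaction terms involving $U(t)W_n^l$ are absorbed by taking $l$ large so that $\|U(t)W_n^l\|_{L^{5k/4}_xL^{5k/2}_t}$ is small. Carrying out these bounds in the derivative norms $L^1_xL^2_t$ with the fractional Leibniz and chain rules (Lemma \ref{lemma6}) and the Strichartz estimates of Section \ref{notation} is the technical core; one also has to approximate each $\psi^j$ by a smooth, compactly supported function in order to rigorously justify that the orthogonality indeed annihilates the cross terms.
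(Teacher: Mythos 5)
Your skeleton is the paper's: boundedness of $\{\phi_n\}$ in $H^1$, the profile decomposition of Theorem \ref{profdec}, the combined mass/energy Pythagorean expansions (the paper's \eqref{L2Norm}), nonlinear profiles plus the definition of $A_C$ to give each profile a finite $L^{5k/4}_xL^{5k/2}_t$ norm, and Proposition \ref{LTPT} to contradict \eqref{phininfty}; your handling of the zero-profile case and of \eqref{wnzero} via \eqref{SPWNL} is also correct. The gaps are in how you set up the perturbation step. The first one is a step that would fail as written: you put the linear remainder inside the approximate solution, $\widetilde u_n=\sum_{j\le l}v^j_n+U(t)W^l_n$, and claim the interaction terms in $e_n$ are ``absorbed'' because $\|U(t)W^l_n\|_{L^{5k/4}_xL^{5k/2}_t}$ is small. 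But Proposition \ref{LTPT} demands smallness of $\|D_xe_n\|_{L^1_xL^2_t}$ (and of $\|D^{s_k}_xe_n\|_{L^1_xL^2_t}$), and $e_n$ contains terms such as $(v^j_n)^k\,U(t)W^l_n$. When the full derivative falls on $U(t)W^l_n$, H\"older gives $\|D_xU(t)W^l_n\|_{L^5_xL^{10}_t}\|v^j_n\|^k_{L^{5k/4}_xL^{5k/2}_t}$, a product of two quantities that are merely bounded; since $W^l_n$ is only bounded in $H^1$, no interpolation can convert smallness of the zero-derivative norm $\|U(t)W^l_n\|_{L^{5k/4}_xL^{5k/2}_t}$ into smallness of a one-derivative norm (that would require control of more than one derivative). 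The paper avoids this by design: its approximate solution is $u_n^l=\sum_{j\le l}v^j_n$ \emph{without} the remainder, and the remainder, corrected by the nonlinear-profile errors (the function $\widetilde W^l_n$), is fed into Proposition \ref{LTPT} as the \emph{initial-data discrepancy}, for which only $\|U(t)\widetilde W^l_n\|_{L^{5k/4}_xL^{5k/2}_t}$ must be small --- exactly what \eqref{SPWNL} provides --- while its $H^1$ size need only be bounded. With that choice the error $e_n^l$ consists purely of cross terms among the $v^j_n$, which vanish as $n\to\infty$ by \eqref{XT}, as in the paper's proof of \eqref{claimen}.

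The second gap is a missing idea: the bounds $M=\sup_t\|\widetilde u_n\|_{H^1}$ and $L=\|\widetilde u_n\|_{L^{5k/4}_xL^{5k/2}_t}$ must be \emph{independent of $l$}, and your justification (``finitely many scattering profiles plus the remainder'') only produces bounds that grow with $l$. This matters because the admissible error size in Proposition \ref{LTPT} is $\varepsilon_1=\varepsilon_1(M,M',L)$, and in your scheme (as in the paper's) $l$ must be chosen large \emph{after} $\varepsilon_1$ is fixed, in order to make the remainder's contribution smaller than $\varepsilon_1$, and only then $n$ large. If $L=L(l)$ and $M=M(l)$, the threshold shrinks as $l$ grows and the induction never closes. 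This uniformity is precisely the content of the paper's claim \eqref{CLAIM} (together with the analogous $L^\infty_tH^1$ bound), and it is proved there by a nontrivial argument: the square-summability $\sum_j\|\psi^j\|^2_{H^1}<\infty$, which follows from \eqref{PYTHA} and \eqref{L2Norm}, allows one to fix $l_0$ so that all tail profiles $j\ge l_0$ are below the small-data threshold of Proposition \ref{SDGT}, whence $\sum_{j\ge l_0}\|v^j_n\|^2_{L^{5k/4}_xL^{5k/2}_t}$ is small uniformly in $l$; the finitely many remaining profiles contribute an $l$-independent amount; and the cross terms in the expansion of $\|u^l_n\|^{5k/4}_{L^{5k/4}_xL^{5k/2}_t}$ vanish as $n\to\infty$ by \eqref{XT}. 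You need to supply this argument (or an equivalent almost-orthogonality statement for the Strichartz norm of sums of profiles) before Proposition \ref{LTPT} can be invoked.
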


\begin{proof}
Since $a_n\rightarrow A_C$ we can assume  $a_n\leq 2A_C$, for all $n\in \mathbb{N}$. Therefore, since $\phi_n\in B(a_n)$, we deduce that
\begin{equation}\label{partialphin}
\|\phi_n\|^2_{H^1}\leq 2(M[\phi_n]+E[\phi_n])\leq 4A_C<+\infty,
\end{equation}
which implies that $\{\phi_{n}\}_{n\in \mathbb{N}}$ is a uniformly bounded sequence in $H^1$ and we can apply the profile decomposition result (Theorem \ref{profdec}) to obtain, for any $l\geq1$,
\begin{equation}\label{decomp1}
\phi_{n}=\sum_{j=1}^{l}U(t_n^j)\psi^j(\cdot-x^j_n) + W_n^l.
\end{equation}

By the Pythagorean expansion \eqref{PYTHA}, with $\lambda=0$, we have for all $l\geq1$,
\begin{equation*}
\sum_{j=1}^{l}\|\psi^j\|^2_{L^{2}} + \limsup_{n\rightarrow \infty}\|W_n^l\|^2_{L^{2}} \leq \limsup_{n\rightarrow \infty}\|\phi_{n}\|^2_{L^{2}}.
\end{equation*}

Moreover, by the Energy Pythagorean expansion \eqref{EPEx}, we also have that
\begin{equation*}
\sum_{j=1}^{l}\limsup_{n\rightarrow \infty}E[U(t_n^j)\psi^j(\cdot-x^j_n)] + \limsup_{n\rightarrow \infty}E[W_n^l] \leq \limsup_{n\rightarrow \infty}E[\phi_{n}].
\end{equation*}

Collecting the last two inequalities we deduce\footnote{Note that in the defocusing case with $k$ even we have that $E[f]\geq 0$, for all $f\in H^1({\R})$.} 
\begin{equation}\label{L2Norm}
\sum_{j=1}^{l}(M[\psi^j]+\limsup_{n\rightarrow \infty}E[U(t_n^j)\psi^j(\cdot-x^j_n)])+\limsup_{n\rightarrow \infty}(M[W_n^l]+E[W_n^l])\leq A_C.
\end{equation}

Next we show that we cannot have more than one $\psi^j$ not zero. Indeed, suppose by contradiction that more than one $\psi^j$ is nonzero. Passing to a subsequence if necessary, we may suppose $t^j_n\rightarrow \bar{t}^j \in [-\infty,+\infty]$, as $n\to\infty$. Moreover,  from \eqref{L2Norm}, we deduce
\begin{equation*}
M[\psi^j]+\limsup_{n\rightarrow \infty}E[U(t_n^j)\psi^j(\cdot-x^j_n)]< A_C,
\end{equation*}
for every $j\geq1$. 

Now, let $\bar{u}^j$ be the nonlinear profile associated with $(\psi^j, \{ t^j_{n} \}_{n \in  \mathbb{N}})$ (see Definition \ref{NLP} and Remark \ref{4.2}).
We claim that
\begin{equation}\label{EXPLA}
\|\bar{u}^j\|_{L^{5k/4}_xL^{5k/2}_t}<\infty. 
\end{equation}
In fact, by definition, we have
\begin{equation}\label{NLP3}
\lim_{n \to \infty} \| \bar{u}^j(t^j_{n}) - U(t^j_{n}) \psi^j \|_{{H}^{1}} = 0.
\end{equation}
Also, from \eqref{NLP3} it is clear that $M[\bar{u}^j]=M[\psi^j]$. Next, we control the quantity $E[\bar{u}^j]$. We have two cases to consider: $|\bar{t}^j|=+\infty$ or $\bar{t}^j\in \R$. In the first case, by the same argument used to prove \eqref{LIM2} we obtain $\underset{n\rightarrow \infty}{\lim}\|U(t_n^j)\psi^j(\cdot-x^j_n)\|_{L^{k+2}}=0$. Therefore, in view of \eqref{NLP3} and the Sobolev Embedding $H^1(\R) \hookrightarrow L^{k+2}(\R)$, for all $k\in \N$, we have  
$\underset{n\rightarrow \infty}{\lim}\|\bar{u}^j(t_n^j)\|_{L^{k+2}}=0$. Moreover 
\[
\begin{split}
E[\bar{u}^j]&=\lim_{n\rightarrow \infty}E[\bar{u}^j(t_n^j)]=\lim_{n\rightarrow \infty}\frac{1}{2}\|\partial_x\bar{u}^j(t_n^j)\|_{L^2}=\frac{1}{2}\|\partial_x \psi^j\|^2_{L^2}\\
&=\lim_{n\rightarrow \infty} E[U(t_n^j)\psi^j(\cdot-x^j_n)],
\end{split}
\]
where we used that $\|\partial_x U(t_n^j)\psi^j(\cdot-x^j_n)\|_{L^2}=\|\partial_x \psi^j\|_{L^2}$.
On the other hand, if $\bar{t}^j\in \R$, by the continuity of the linear flow and \eqref{NLP3} we also have
$$
E[\bar{u}^j]=E[U(\bar{t}^j) \psi^j]=\lim_{n\rightarrow \infty} E[U(t_n^j)\psi^j(\cdot-x^j_n)].
$$
Thus, in both cases,
\begin{equation}\label{EXPLA1}
M[\bar{u}^j]+E[\bar{u}^j]< A_C.
\end{equation}
Therefore, by the definition of $A_C$ we deduce \eqref{EXPLA}.

Next we claim that 
\begin{equation}\label{Ds510}
\|D^{s_k}_x\bar{u}^j\|_{L^{5}_xL^{10}_{t}}<\infty.
\end{equation}
Indeed, the proof follows the ideas in Proposition \ref{PROPSCAT}. Let $\delta>0$ be a small number to be chosen later. From \eqref{EXPLA} we can decompose the $[0,\infty)$ into a finite number of intervals, say, $I_n=[t_n,t_{n+1})$, $n=0,1,\dots,\ell-1$  such that
$\|\bar{u}^j\|_{L^{5k/4}_xL^{5k/2}_{I_n}}<\delta$. From \eqref{eq4} with $(p,q,\alpha)=(5,10,0)$, we have, for $n=0,1,\dots,\ell-1$,
\begin{equation*}
\begin{split}
\|D_x^{s_k}\bar{u}^j&\|_{L^5_xL^{10}_{I_n}}\lesssim
\|D_x^{s_k}\bar{u}^j(0)\|_{L^2}+\|D^{s_k}_x\int_0^tU(t-t')\partial_x((\bar{u}^j)^{k+1})(t')dt'\|_{L^5_xL^{10}_{I_n}}\\
&\lesssim\|D_x^{s_k}\bar{u}^j(0)\|_{L^2}+\sum_{m=0}^n\|D_x^{s_k}\int_{t_m}^{t_{m+1}}U(t-t')\partial_x((\bar{u}^j)^{k+1})(t')dt'\|_{L^5_xL^{10}_{I_n}}\\
&\lesssim \|D_x^{s_k}\bar{u}^j(0)\|_{L^2}+\sum_{m=0}^n\|D_x^{s_k}\int_{0}^{t}U(t-t')\partial_x((\bar{u}^j)^{k+1})(t')\chi_{I_m}(t')dt'\|_{L^5_xL^{10}_{t}},\\
\end{split}
\end{equation*}
where $\chi_{I_m}$ denotes the characteristic function of the interval $I_m$.
By using Lemma \ref{lemma2}, with $(p_1,q_1,\alpha_1)=(5,10,0)$ and
$(p_2,q_2,\alpha_2)=(\infty,2,1)$, and the Leibnitz rule, we then deduce
\begin{equation*}
\begin{split}
\|D_x^{s_k}\bar{u}^j\|_{L^5_xL^{10}_{I_n}}&\lesssim \|D_x^{s_k}\bar{u}^j(0)\|_{L^2}+\sum_{m=0}^n\|D_x^{s_k}((\bar{u}^j)^{k+1})\|_{L^1_xL^{2}_{I_m}}\\
&\lesssim \|D_x^{s_k}\bar{u}^j(0)\|_{L^2}+\sum_{m=0}^n\|D_x^{s_k}\bar{u}^j\|_{L^5_xL^{10}_{I_m}}\|\bar{u}^j\|_{L^{5k/4}_xL^{5k/2}_{I_m}}^k\\
&\leq
c\|D_x^{s_k}\bar{u}^j(0)\|_{L^2}+c\delta^k\sum_{m=0}^n\|D_x^{s_k}\bar{u}^j\|_{L^5_xL^{10}_{I_m}}.
\end{split}
\end{equation*}
Therefore, by choosing $c\delta^k<1/2$, we conclude
\begin{equation}\label{eq9.3a}
\begin{split}
\|D_x^{s_k}\bar{u}^j\|_{L^5_xL^{10}_{I_n}}\leq
2c\|D_x^{s_k}\bar{u}^j(0)\|_{L^2}+\sum_{m=0}^{n-1}\|D_x^{s_k}\bar{u}^j\|_{L^5_xL^{10}_{I_m}}.
\end{split}
\end{equation}
Inequality \eqref{eq9.3a} together with a induction argument implies that
$\|D_x^{s_k}\bar{u}^j\|_{L^5_xL^{10}_{I_n}}<\infty$, $n=0,1,\dots,\ell-1$. By summing
over the $\ell$ intervals we conclude $\|D_x^{s_k}\bar{u}^j\|_{L^5_xL^{10}_{[0,+\infty)}}<\infty$. Using the same argument in the interval $(-\infty,0]$ we deduce \eqref{Ds510}. In particular, \eqref{eq9.3a} implies that
\begin{equation}\label{eq9.3b}
\|D^{s_k}_x\bar{u}^j\|_{L^5_xL^{10}_t}\lesssim \|D_x^{s_k}\bar{u}^j(0)\|_{L^2}\lesssim \|\bar{u}^j(0)\|_{H^1}.
\end{equation}

Next, define
$$v_n(t)=\textrm{KdV}(t)\phi_{n},$$
$$v_n^j(t)=\bar{u}^j( t+t_n,\cdot-x_n)$$
and
\begin{equation}\label{unl}
u_n^l(t)=\sum_{j=1}^lv_n^j(t).
\end{equation}
We can easily check that $u_n^l$ satisfies the following equation
$$
\partial_t {u}_n^l+\partial_{xxx}{u}_n^l+\partial_x({u}_n^l)^{k+1}=\partial_xe_n^l,
$$
where $e_n^l=(u_n^l)^{k+1}-\sum_{j=1}^l(v_n^j)^{k+1}.$ Moreover, by setting 
$$
\widetilde{W}^l_n=W^l_n - \sum_{j=1}^l\Big(\bar{u}^j(t^j_n,\cdot-x^j_n)-U(t^j_n)\psi(\cdot-x^j_n)\Big),
$$
from Lemma \ref{lemma12},  the definition of the nonlinear profile and  \eqref{SPWNL}, we have
$$\limsup_{n\to\infty}\|U(t)\widetilde{W}^l_n\|_{L^{5k/4}_xL^{5k/2}_t} = \limsup_{n\to\infty}\|U(t){W}^l_n\|_{L^{5k/4}_xL^{5k/2}_t}\rightarrow 0, \peqq \textrm{as} \peqq l\rightarrow \infty.$$
It is clear that $v_n(0)-u_n^l(0)=\widetilde{W}^l_n$ and therefore, given $\varepsilon>0$, we have for $n$ and $l$ sufficiently large,
$$\|U(t)\big(v_n(0)-u_n^l(0)\big)\|_{L^{5k/4}_xL^{5k/2}_t}<\varepsilon.$$

The idea now is to obtain a relation between $v_n$ and $u_n^l$ using the perturbation theory (Proposition \ref{LTPT}). To this end we first claim that for a fixed $l$ there exists $n_0(l)\in \mathbb{N}$ such that, for $n\geq n_0(l)$,
\begin{equation}\label{claimen}
\|D^{s_k}_xe_n^l\|_{L^1_xL^2_t}+\|D_xe_n^l\|_{L^1_xL^2_t}+\|e_n^l\|_{L^1_xL^2_t}<\varepsilon_1,
\end{equation}
where $\varepsilon_1>0$ is given in Proposition \ref{LTPT}.

Indeed, we start with the norm $\|D^{s_k}_xe_n^l\|_{L^1_xL^2_t}$. The expansion of $e_n^l$ consists of $\sim$ $l^{k+1}$ cross terms of the form
$$v_n^{j_1}\cdots v_n^{j_{k+1}}, \peq \textrm{with} \peq j_1,\dots j_{k+1}\in \{1,\dots, l\},$$
where at least two indices $j_i$ are different. So, it suffices to show that each one of these terms goes to zero, as $n\to\infty$. Assume, without loss of generality, that $j_1\neq j_2$. By the Leibnitz rule (Lemma \ref{lemma6})
\begin{equation}\label{estivj}
\begin{split}
\|D_x^{s_k}(v_n^{j_1}v_n^{j_2}v_n^{j_3}\cdots v_n^{j_{k+1}})\|_{L^1_xL^2_t}&\leq \|D_x^{s_k}(v_n^{j_1}v_n^{j_2})\|_{L^{p_1}_xL^{q_1}_t}\|v_n^{j_3}\cdots v_n^{j_{k+1}}\|_{L^{p_2}_xL^{q_2}_t}\\
&\quad+\|v_n^{j_1}v_n^{j_2}\|_{L^{\widetilde{p}_1}_xL^{\widetilde{q}_1}_t}\|D_x^{s_k}(v_n^{j_3}\cdots v_n^{j_{k+1}})\|_{L^{\widetilde{p}_2}_xL^{\widetilde{q}_2}_t},
\end{split}
\end{equation}
where $p_1=5k/(k+4)$, $q_1=10k/(k+4)$, $\widetilde{p}_1=5k/8$, $\widetilde{q}_1=5k/4$.
By using H\"older's inequality and the Leibnitz rule, it is not difficult to see that
\begin{equation}\label{estivj1}
\|v_n^{j_3}\cdots v_n^{j_{k+1}}\|_{L^{p_2}_xL^{q_2}_t}\lesssim \prod_{i=3}^{k+1}\|v_n^{j_i}\|_{L^{5k/4}_xL^{5k/2}_t}
\end{equation}
and
\begin{equation}\label{estivj2}
\|D_x^{s_k}(v_n^{j_3}\cdots v_n^{j_{k+1}})\|_{L^{\widetilde{p}_2}_xL^{\widetilde{q}_2}_t}\lesssim \sum_{i=3}^{k+1}\|D^{s_k}_xv_n^{j_i}\|_{L^5_xL^{10}_t}\prod_{m\neq i}\|v_n^{j_m}\|_{L^{5k/4}_xL^{5k/2}_t}.
\end{equation}
In order to verify that $\|D^{s_k}_xe_n^l\|_{L^1_xL^2_t}$ is small, it suffices to prove that the r.h.s of \eqref{estivj} goes to zero, as $n\to\infty$. To do so, it suffices to show that the r.h.s of \eqref{estivj1} and \eqref{estivj2} are uniformly bounded (with respect to $n$) and 
\begin{equation}\label{LIMIT}
\|D_x^{s_k}(v_n^{j_1}v_n^{j_2})\|_{L^{5k/(k+4)}_xL^{10k/(k+4)}_t} +\|v_n^{j_1}v_n^{j_2}\|_{L^{5k/8}_xL^{5k/4}_t} \rightarrow 0, \peqq \textrm{as} \peqq n\rightarrow \infty.
\end{equation}

Since the norm in $L^{5k/4}_xL^{5k/2}_t$ is invariant by translations, from the definition of $v_n^{j_i}$, it is clear that $\|v_n^{j_i}\|_{L^{5k/4}_xL^{5k/2}_t}$ is uniformly bounded. On the other hand, from  \eqref{eq9.3b},
\[
\begin{split}
\|D^{s_k}_xv_n^{j_i}\|_{L^5_xL^{10}_t}\lesssim \|\bar{u}^j(t_n,\cdot-x_n)\|_{H^1}\lesssim \sup_{t \in \R} \|\bar{u}^j(t)\|_{H^1}\lesssim (M[\bar{u}^j(0)]+2E[\bar{u}^j(0)])^{1/2}.
\end{split}
\]
Thus, it remains to show \eqref{LIMIT}.
Since $v_n^{j_1},v_n^{j_2} \in L^{5k/4}_xL^{5k/2}_t$, without loss of generality, we can assume that $v_n^{j_1},v_n^{j_2} \in C_0^{\infty}(\R^2)$. Using property \eqref{XT}, we promptly infer that
$$\|v_n^{j_1}v_n^{j_2}\|_{L^{5k/8}_xL^{5k/4}_t} \rightarrow 0, \peqq \textrm{as} \peqq n\rightarrow \infty.$$
Next, we turn attention to the other norm that appears in \eqref{LIMIT}. Note that $v_n^{j_1},v_n^{j_2}, \partial_xv_n^{j_1}, \partial_x v_n^{j_2} \in L^{5}_xL^{10}_t$, therefore property \eqref{XT} yields
\begin{equation}\label{ERROR1}
\|v_n^{j_1}v_n^{j_2}\|_{L^{5k/(k+4)}_xL^{10k/(k+4)}_t} \rightarrow 0, \peqq \textrm{as} \peqq n\rightarrow \infty.
\end{equation}
and
\begin{equation}\label{ERROR2}
\begin{split}
\|\partial_x(v_n^{j_1}v_n^{j_2})\|_{L^{5k/(k+4)}_xL^{10k/(k+4)}_t}\leq & \|\partial_x(v_n^{j_1})v_n^{j_2}\|_{L^{5k/(k+4)}_xL^{10k/(k+4)}_t} \\
&+ \|v_n^{j_1}\partial_x(v_n^{j_2})\|_{L^{5k/(k+4)}_xL^{10k/(k+4)}_t}\rightarrow 0, \peqq \textrm{as} \peqq n\rightarrow \infty.
\end{split}
\end{equation}

Now, by complex interpolation,
$$\|D_x^{s_k}(v_n^{j_1}v_n^{j_2})\|_{L^{5k/(k+4)}_xL^{10k/(k+4)}_t}\leq \|v_n^{j_1}v_n^{j_2}\|_{L^{5k/(k+4)}_xL^{10k/(k+4)}_t}^{1-s_k}\|\partial_x(v_n^{j_1}v_n^{j_2})\|_{L^{5k/(k+4)}_xL^{10k/(k+4)}_t}^{s_k},$$
which implies \eqref{LIMIT}. The other norms $\|D_xe_n^l\|_{L^1_xL^2_t}$ and $\|e_n^l\|_{L^1_xL^2_t}$ can be estimated analogously using relations \eqref{ERROR1} and \eqref{ERROR2}. Gathering these estimates together yields  \eqref{claimen}.

Next, we show that there exist $L>0$ (independent of $l$) and $n_1(l)\in\mathbb{N}$ such that
\begin{equation}\label{CLAIM}
\|u_n^l\|_{L^{5k/4}_xL^{5k/2}_t}<L, \peq \textrm{for all} \peq n>n_1(l).
\end{equation}
By the Pythagorean expansion \eqref{PYTHA}, with $\lambda=1$, and \eqref{partialphin}, we have for all $l\in \mathbb{N}$,
$$
\sum_{j=1}^{l}\|\partial_x\psi^j\|^2_{L^{2}} + \limsup_{n\rightarrow \infty}\|\partial_xW_n^l\|^2_{L^{2}} \leq \limsup_{n\rightarrow \infty}\|\partial_x\phi_{n}\|^2_{L^{2}} \leq 2^{1+1/s_k}A_C^{1/s_k},
$$
which implies, using also \eqref{L2Norm}, the existence of a constant $C_1>0$ such that
$$\sum_{j=1}^{\infty}\|\psi^j\|^2_{H^1}\leq C_1.$$
Therefore, we can find $l_0\in \mathbb{N}$ large enough satisfying
$$\sum_{j=l_0}^{\infty}\|\psi^j\|^2_{H^1}\leq \delta,$$
where $\delta>0$ is a sufficiently small number to be chosen later.

Fix $l> l_0$. From the construction of the nonlinear profile $\bar{u}^j$, there exists $n_1(l)\geq 1$ such that, for all $n\geq n_1(l)$,
\begin{equation*}
\sum_{j=l_0}^{l}\|\bar{u}^j( t_n^j)\|^2_{H^1}\leq 2\delta.
\end{equation*}
Therefore, choosing $\delta>0$ sufficiently small such that we can apply Proposition \ref{SDGT} for all $n\geq n_1(l)$, we deduce that
\begin{equation}\label{KDVTNJ1}
\sum_{j=l_0}^{l}\|v_n^j\|^2_{L^{5k/4}_xL^{5k/2}_t}\leq 2 \sum_{j=l_0}^{l}\|\bar{u}^j(\cdot-x_n^j, t_n^j)\|^2_{H^1}\leq 4\delta.
\end{equation}

Now by the definition of $u^l_n$ (see \eqref{unl}) and the elementary inequality
$$
\Big||\sum_{j=1}^{l}z_j|^a-\sum_{j=1}^{l}|z_j|^a\Big|\leq C_l\sum_{i\neq m}|a_i||z_m|^{a-1}, \qquad a>1,
$$
we have
\[
\begin{split}
\|u_n^l\|_{L^{5k/4}_xL^{5k/2}_t}^{5k/4}&=\int \left( \int|u_n^l|^{5k/2}dt\right)^{1/2}dx\\
&\leq \int \left( \int\Big||\sum_{j=1}^lv_n^j|^{5k/2}-\sum_{j=1}^l|v_n^j|^{5k/2}\Big|+\sum_{j=1}^l|v_n^j|^{5k/2}dt\right)^{1/2}\!\!\!\!dx\\
&\lesssim \int \left( \int\sum_{j=1}^l|v_n^j|^{5k/2}dt \right)^{1/2}\!\!\!\!dx+\int \left( \int\sum_{i\neq m}|v_n^i||v_n^m|^{(5k-2)/2}dt \right)^{1/2}\!\!\!\!dx\\
&= \sum_{j=1}^{l}\|v_n^j\|_{L^{5k/4}_xL^{5k/2}_t}^{5k/4}+\textrm{cross terms},
\end{split}
\]
that is,
\begin{equation}\label{crossl}
\|u_n^l\|_{L^{5k/4}_xL^{5k/2}_t}^{5k/4}=\sum_{j=1}^{l_0-1}\|v_n^j\|_{L^{5k/4}_xL^{5k/2}_t}^{5k/4}+\sum_{j=l_0}^{l}\|v_n^j\|_{L^{5k/4}_xL^{5k/2}_t}^{5k/4}+\textrm{cross terms}.
\end{equation}

The ``cross terms" can be made small for $n \in \mathbb{N}$ large (depending on $l$). Indeed, if $1/p+1/q=1$,
\[
\begin{split}
\textrm{cross terms}&\lesssim \sum_{i\neq m}\int\left( \int|v_n^i||v_n^m|^{\frac{5k-2}{2}}dt \right)^{1/2}dx\\
&=\sum_{i\neq m}\int\left( \int|v_n^i||v_n^m||v_n^m|^{(5k-4)/2}dt \right)^{1/2}dx\\
&\leq\sum_{i\neq m}\int\left( \|v_n^iv_n^m\|_{L^q_t}\|v_n^m\|^{(5k-4)/2}_{L_t^{p(5k-4)/2}} \right)^{1/2}dx\\
&=\sum_{i\neq m}\int\left( \|v_n^iv_n^m\|_{L^q_t}^{1/2}\|v_n^m\|^{(5k-4)/4}_{L_t^{p(5k-4)/2}} \right)dx\\
&\leq \sum_{i\neq m}\|v_n^iv_n^m\|_{L_x^{q/2}L^q_t}^{1/2} \|v_n^m\|^{(5k-4)/4}_{L_x^{p(5k-4)/4}L_t^{p(5k-4)/2}}.
\end{split}
\]
By choosing $p=5k/(5k-4)$ we have $q=5k/4$. Thus,
\begin{equation}\label{crossl1}
\begin{split}
\textrm{cross terms}
\lesssim\sum_{i\neq m}\|v_n^iv_n^m\|_{L_x^{5k/8}L^{5k/4}_t}^{1/2} \|v_n^m\|^{(5k-4)/4}_{L_x^{5k/4}L_t^{5k/2}}.
\end{split}
\end{equation}
 the first term on the right-hand side of \eqref{crossl1} goes to zero as in \eqref{LIMIT}.  The remainder term is bounded by \eqref{EXPLA}.
Consequently, taking into account that $5k/4>2$, by using \eqref{KDVTNJ1} and \eqref{crossl1}, we get \eqref{CLAIM} from \eqref{crossl}.

A similar argument also establishes the existence of $M>0$ independent of $l$ and $n_2(l)\in\mathbb{N}$ such that
\begin{equation*}
\|u_n^l\|_{L^{\infty}_tH^1}<M, \peq \textrm{for all} \peq n>n_2(l).
\end{equation*}

Finally, we apply Proposition \ref{LTPT} to $v_n$ and $u_n^l$, which in view of \eqref{CLAIM} leads to a contradiction with \eqref{phininfty} for sufficiently large $n\in\mathbb{N}$.  This indeed shows that the sum in \eqref{decomp1} must have a unique element, which without loss of generality we may assume to be the first one. Hence, by taking $t_n=t_n^1$, $x_n=x_n^1$, $\psi=\psi^1$, and $W_n=W_n^1$  we obtain the desired. Note that \eqref{wnzero} holds in view of  Theorem \ref{profdec} (see relation \eqref{SPWNL}).
\end{proof}

Now, we are in position to prove the following fundamental result.

\begin{theorem}[Existence of a critical solution]\label{ECE} 
There exists $u_{C,0}\in H^1$ such that  the corresponding global solution $u_C$ satisfies  $\|u_C\|_{L^{5k/4}_xL^{5k/2}_{t}}=\infty$. Moreover
\begin{equation}\label{uC}
M[u_C]+E[u_C]=A_C.
\end{equation}
\end{theorem}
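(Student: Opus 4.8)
The plan is to obtain the critical solution as the (unique) nonlinear profile furnished by Lemma \ref{Lemma} applied to the minimizing sequence, and then to upgrade the two Pythagorean bounds into the exact identity \eqref{uC} by means of the long-time perturbation theory. First I would set $\phi_n:=u_{n,0}$ and $a_n:=A_n$, so that $\phi_n\in B(a_n)$, $a_n\to A_C$, and \eqref{uninfty} gives $\|\KdV(t)\phi_n\|_{L^{5k/4}_xL^{5k/2}_t}=\infty$, which is precisely hypothesis \eqref{phininfty}. Lemma \ref{Lemma} then yields, after passing to a subsequence, a single profile
\[
\phi_n=U(t_n)\psi(\cdot-x_n)+W_n,\qquad \lim_{n\to\infty}\|U(t)W_n\|_{L^{5k/4}_xL^{5k/2}_t}=0 .
\]
Refining the subsequence I may assume $t_n\to\bar t\in[-\infty,+\infty]$. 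Letting $\bar u$ be the nonlinear profile associated with $(\psi,\{t_n\})$ (Definition \ref{NLP} and Remark \ref{4.2}), which is global since $k$ is even, I would declare $u_{C,0}:=\bar u(0)$ and $u_C:=\bar u$; it then remains only to verify \eqref{uC} and $\|u_C\|_{L^{5k/4}_xL^{5k/2}_t}=\infty$.

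Next I would record the upper bound $M[\bar u]+E[\bar u]\le A_C$. The single-profile version of \eqref{L2Norm} reads
\[
M[\psi]+\limsup_{n\to\infty}E[U(t_n)\psi(\cdot-x_n)]+\limsup_{n\to\infty}\big(M[W_n]+E[W_n]\big)\le A_C,
\]
while the identities $M[\bar u]=M[\psi]$ and $E[\bar u]=\lim_{n\to\infty}E[U(t_n)\psi(\cdot-x_n)]$ (established exactly as inside the proof of Lemma \ref{Lemma}, splitting the cases $|\bar t|=\infty$ and $\bar t\in\R$ and using \eqref{NLP3}), together with the nonnegativity $M,E\ge0$, give the claim. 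In particular $\psi\neq0$, for otherwise $\phi_n=W_n$ and the small data theory (Proposition \ref{SDGT}) would force $\|\KdV(t)\phi_n\|_{L^{5k/4}_xL^{5k/2}_t}<\infty$ for $n$ large, contradicting \eqref{phininfty}.

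The heart of the argument is to show $\|\bar u\|_{L^{5k/4}_xL^{5k/2}_t}=\infty$. I would argue by contradiction, supposing this norm equals some $L<\infty$, and apply Proposition \ref{LTPT} with approximate solution $\widetilde u(t):=\bar u(t+t_n,\cdot-x_n)$ (an exact solution, so the error $e\equiv0$) and actual solution $v_n:=\KdV(t)\phi_n$. The hypotheses are checked using the single-profile structure: $\sup_t\|\widetilde u(t)\|_{H^1}$ and $\|\widetilde u\|_{L^{5k/4}_xL^{5k/2}_t}=L$ are controlled by translation invariance; and since $v_n(0)-\widetilde u(0)=\big(U(t_n)\psi(\cdot-x_n)-\bar u(t_n,\cdot-x_n)\big)+W_n$, the defining property $\|\bar u(t_n)-U(t_n)\psi\|_{H^1}\to0$ makes this difference bounded in $H^1$, while Lemma \ref{lemma12} applied to the first bracket together with \eqref{wnzero} gives $\|U(t)\big(v_n(0)-\widetilde u(0)\big)\|_{L^{5k/4}_xL^{5k/2}_t}\to0$. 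Thus the smallness hypotheses of Proposition \ref{LTPT} hold for $n$ large, yielding $\|v_n\|_{L^{5k/4}_xL^{5k/2}_t}\le C(M,M')<\infty$ and contradicting \eqref{phininfty}. Hence $\|\bar u\|_{L^{5k/4}_xL^{5k/2}_t}=\infty$.

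Finally, \eqref{uC} follows by combining $M[\bar u]+E[\bar u]\le A_C$ with the observation that $M[\bar u]+E[\bar u]<A_C$ would place $\bar u$ in the scattering regime $SC(A)$ for some $A<A_C$, forcing $\|\bar u\|_{L^{5k/4}_xL^{5k/2}_t}<\infty$ against the previous step; therefore $M[\bar u]+E[\bar u]=A_C$. I expect the main obstacle to be the contradiction step: one must carefully set up the time-shifted, space-translated profile $\widetilde u$ as the approximate solution and verify that the data discrepancy $v_n(0)-\widetilde u(0)$ is small in the \emph{linear} Strichartz norm, so that Proposition \ref{LTPT} transfers finiteness of the Strichartz norm from $\bar u$ to $v_n$. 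Once this transfer is in place, everything else reduces to bookkeeping with the Pythagorean expansions \eqref{L2Norm} and the mass/energy identities for the nonlinear profile.
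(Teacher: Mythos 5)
Your proposal is correct and follows essentially the same route as the paper: apply Lemma \ref{Lemma} to the minimizing sequence, take the nonlinear profile $\bar u$ as $u_C$, run the contradiction via Proposition \ref{LTPT} with the time-shifted, space-translated profile as the approximate solution (error $e\equiv 0$) and the discrepancy $\widetilde W_n=W_n-(\bar u(t_n,\cdot-x_n)-U(t_n)\psi(\cdot-x_n))$ small in the linear Strichartz norm, and then deduce \eqref{uC} from the definition of $A_C$. The only cosmetic difference is your explicit remark that $\psi\neq 0$, which the paper leaves implicit since the perturbation step already handles that case.
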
 
\begin{proof}
Taking $a_n=A_n$ and $\phi_n=u_{n,0}$ (where $A_n$ and $u_{n,0}$ are the sequences described in the introduction of this section) in Lemma \ref{Lemma}, there exist a function $\psi\in  H^1$ and sequences  $\{W_n\}_{n \in \mathbb{N}}\subset H^1$, $\{t_n\}_{ n \in \mathbb{N}}\subset \R$ and $\{x_n\}_{ n \in \mathbb{N}}\subset \R$, such that for any $n\geq1$,
\begin{equation*}
u_{n,0}=U(t_n)\psi(\cdot-x_n) + W_n.
\end{equation*} 
Moreover, arguing as in \eqref{L2Norm}, it is easy to see that
$$
M[\psi]+\limsup_{n\rightarrow \infty} E[U(t_n)\psi(\cdot-x_n)]\leq A_C.
$$
Consider the sequence $\{t_n\}_{n\in \mathbb{N}}$. Let $\bar{u}$ is the nonlinear profile associated with $(\psi, \{ t_{n} \}_{n \in  \mathbb{N}})$ then
\begin{equation}\label{NLP2}
\lim_{n \to \infty} \| \bar{u}(t_{n}) - U(t_{n}) \psi \|_{{H}^{1}} = 0.
\end{equation}
As in the proof of \eqref{EXPLA1}, we can also deduce that
\begin{equation}\label{EXPLA2}
M[\bar{u}]+E[\bar{u}]\leq  A_C.
\end{equation}

Now, define $\widetilde{W}_n=W_n - (\bar{u}(t_n,\cdot-x_n)-U(t_n)\psi(\cdot-x_n))$. By using  Lemma \ref{lemma12}, we have 
\begin{equation*}
\begin{split}
\|U(t)\widetilde{W}_n\|_{L^{5k/4}_xL^{5k/2}_t} \leq \|U(t){W}_n\|_{L^{5k/4}_xL^{5k/2}_t}+c\|\bar{u}(t_{n}) - U(t_{n}) \psi\|_{\dot{H}^{s_k}},
\end{split}
\end{equation*}
which, in view of \eqref{wnzero} and \eqref{NLP2}, implies
\begin{equation}\label{wtilde0}
\lim_{n\to\infty}\|U(t)\widetilde{W}_n\|_{L^{5k/4}_xL^{5k/2}_t}\rightarrow 0.
\end{equation}

By the definition of $\widetilde{W}^l_n$ we can write
\begin{equation}\label{star2}
u_{n,0}=\bar{u}(t_n,\cdot-x_n)+\widetilde{W}_n.
\end{equation}

Let $u_C(t,x)=\bar{u}(t,x)$, we claim that 
\begin{equation}\label{UCL}
\|u_C\|_{L^{5k/4}_xL^{5k/2}_t}=\infty.
\end{equation}
Indeed, let $\widetilde{u}_n(t)=\bar{u}( t+t_n,\cdot-x_n)$. Then, it is clear that $\widetilde{u}_n$ solves the gKdV equation \eqref{gkdv}. Moreover, if \eqref{UCL} does not hold, we have
$$
\|\widetilde{u}_n\|_{L^{5k/4}_xL^{5k/2}_t}=\|u_C\|_{L^{5k/4}_xL^{5k/2}_t}<\infty.
$$
In addition, in view of \eqref{EXPLA2},
\begin{equation}\label{uCtn}
\sup_{t \in \R}\|\widetilde{u}_n(t)\|_{H^1}=\sup_{t \in \R}\|u_C(t)\|_{H^1}\leq (2A_C)^{1/2} <+\infty,
\end{equation}
for every $n\in \N$.

Now recall that $u_n(t)=\textrm{KdV}(t)u_{n,0}$. Our aim is to compare $\widetilde{u}_n$ and ${u}_n$ via Proposition \ref{LTPT} (perturbation theory). We have, from \eqref{wtilde0} and \eqref{star2},
\begin{eqnarray*}
\|U(t)(u_n(0)-\widetilde{u}_n(0))\|_{L^{5k/4}_xL^{5k/2}_t}&=&\|U(t)(u_{n,0}-\bar{u}(t_n,\cdot-x_n))\|_{L^{5k/4}_xL^{5k/2}_t} \\
&=&\|U(t)\widetilde{W}_n\|_{L^{5k/4}_xL^{5k/2}_t}\rightarrow 0, \peqq \textrm{as} \peqq n\rightarrow \infty.
\end{eqnarray*}
Therefore, applying Proposition \ref{LTPT} with $e=0$, we obtain $\|u_{n}\|_{L^{5k/4}_xL^{5k/2}_t}<\infty$ for $n\in \mathbb{N}$ large enough, which is a contradiction with \eqref{uninfty}. Finally, from \eqref{EXPLA2}, \eqref{UCL} and the definition of $A_C$, we also conclude \eqref{uC}.
\end{proof}

Let $u_C$ be the critical solution given by Theorem \ref{ECE}. Since $\|u_C\|_{L^{5k/4}_xL^{5k/2}_{t}}=\infty$ at least one of the following hold $\|u_C\|_{L^{5k/4}_xL^{5k/2}_{[0,+\infty)}}=\infty$ or $\|u_C\|_{L^{5k/4}_xL^{5k/2}_{(-\infty,0]}}=\infty$. The next result shows that this solution has a compactness property up to translation in space.

\begin{proposition}[Precompactness of the critical flow]\label{Precomp}
Let $u_C$ be the critical solution constructed in Theorem \ref{ECE} and assume $\|u_C\|_{L^{5k/4}_xL^{5k/2}_{[0,+\infty)}}=\infty$. Then there exists a continuous path $x\in C([0,+\infty):\R)$ such that the set
$$
\mathcal{B}:=\{u_C(t,\cdot-x(t)): t\geq 0\}\subset H^1
$$
is precompact in $H^1$. A corresponding conclusion is reached if $\|u_C\|_{L^{5k/4}_xL^{5k/2}_{(-\infty,0]}}=\infty$.
\end{proposition}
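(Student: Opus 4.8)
The plan is to establish precompactness modulo spatial translation along arbitrary time sequences, and then to upgrade the translation parameters to a continuous path. I treat the forward case $\|u_C\|_{L^{5k/4}_xL^{5k/2}_{[0,+\infty)}}=\infty$, the backward case being identical after time reversal. Observe first that $\|u_C\|_{L^{5k/4}_xL^{5k/2}_{[T,\infty)}}=\infty$ for every $T\geq0$, since otherwise Remark \ref{Rema-SDT} (finiteness on the bounded interval $[0,T]$) would give $\|u_C\|_{L^{5k/4}_xL^{5k/2}_{[0,\infty)}}<\infty$. Now let $\{t_n\}\subset[0,+\infty)$ be arbitrary and, passing to a subsequence, assume $t_n\to\bar t\in[0,+\infty]$. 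If $\bar t<\infty$, the $H^1$-continuity of the flow gives $u_C(t_n)\to u_C(\bar t)$ and nothing is needed, so assume $t_n\to+\infty$. I would apply Lemma \ref{Lemma} to $\phi_n:=u_C(t_n)$: its hypotheses hold because $M[\phi_n]+E[\phi_n]=M[u_C]+E[u_C]=A_C$ (take $a_n\equiv A_C$), while $\mathrm{KdV}(t)\phi_n=u_C(\cdot+t_n)$ has full-line Strichartz norm equal to $\|u_C\|_{L^{5k/4}_xL^{5k/2}_t}=\infty$, verifying \eqref{phininfty}. The lemma yields a single profile $\phi_n=U(\tau_n)\psi(\cdot-y_n)+W_n$ with $\|U(t)W_n\|_{L^{5k/4}_xL^{5k/2}_t}\to0$, and, along a subsequence, $\tau_n\to\bar\tau\in[-\infty,+\infty]$.

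The next step promotes the remainder to strong convergence, $W_n\to0$ in $H^1$. Combining the Pythagorean expansion \eqref{PYTHA} and the energy expansion \eqref{EPEx} with $M[\phi_n]+E[\phi_n]=A_C$, I obtain along a further subsequence $A_C=M[\bar u]+E[\bar u]+\lim_n\big(M[W_n]+E[W_n]\big)$, where $\bar u$ is the nonlinear profile associated with $(\psi,\{\tau_n\})$ and $M[\bar u]+E[\bar u]\leq A_C$, exactly as in \eqref{L2Norm}–\eqref{EXPLA1}. If $\lim_n(M[W_n]+E[W_n])>0$ then $M[\bar u]+E[\bar u]<A_C$, so by definition of $A_C$ the solution $\bar u$ scatters, $\|\bar u\|_{L^{5k/4}_xL^{5k/2}_t}<\infty$. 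Writing $u_C(t_n)=\bar u(\tau_n,\cdot-y_n)+\widetilde W_n$ with $\|U(t)\widetilde W_n\|_{L^{5k/4}_xL^{5k/2}_t}\to0$ (via Definition \ref{NLP} and Lemma \ref{lemma12}) and comparing $u_C(\cdot+t_n)$ with $\bar u(\cdot+\tau_n,\cdot-y_n)$ through Proposition \ref{LTPT} would force $\|u_C\|_{L^{5k/4}_xL^{5k/2}_{[t_n,\infty)}}<\infty$, contradicting the observation above. Hence $\lim_n(M[W_n]+E[W_n])=0$, and since $M,E\geq0$ with $E\geq\tfrac12\|\partial_x\cdot\|_{L^2}^2$ in the defocusing case, this gives $W_n\to0$ in $H^1$ together with $M[\bar u]+E[\bar u]=A_C$; in particular $u_C(t_n,\cdot+y_n)=\bar u(\tau_n)+o(1)$ in $H^1$.

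The hard part will be excluding $\bar\tau=\pm\infty$, which is what forces genuine compactness rather than mere boundedness. If $\bar\tau=+\infty$, then by the construction of the nonlinear profile at $+\infty$ (Remark \ref{4.2}) the solution $\bar u$ scatters forward, so $\|\bar u\|_{L^{5k/4}_xL^{5k/2}_{[T,\infty)}}<\infty$ for some $T$; choosing $n$ large so that $\tau_n>T$, the comparison solution $\bar u(\cdot+\tau_n,\cdot-y_n)$ has finite forward Strichartz norm, and Proposition \ref{LTPT} run forward yields $\|u_C\|_{L^{5k/4}_xL^{5k/2}_{[t_n,\infty)}}<\infty$, a contradiction. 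If $\bar\tau=-\infty$, then $\bar u$ scatters backward, so $\|\bar u\|_{L^{5k/4}_xL^{5k/2}_{(-\infty,\tau_n]}}\to0$; applying the interval version of the perturbation theory on the half-line $(-\infty,0]$ to $u_C(\cdot+t_n)$ and $\bar u(\cdot+\tau_n,\cdot-y_n)$ gives a uniform bound $\|u_C\|_{L^{5k/4}_xL^{5k/2}_{(-\infty,t_n]}}\leq C$, whereas $\|u_C\|_{L^{5k/4}_xL^{5k/2}_{(-\infty,t_n]}}\geq\|u_C\|_{L^{5k/4}_xL^{5k/2}_{[0,t_n]}}\to\infty$ as $t_n\to+\infty$, again a contradiction. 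Therefore $\bar\tau\in\R$, continuity of the flow gives $\bar u(\tau_n)\to\bar u(\bar\tau)$, and $u_C(t_n,\cdot+y_n)\to\bar u(\bar\tau)$ strongly in $H^1$. This shows that $\{u_C(t):t\geq0\}$ is precompact in $H^1$ modulo spatial translations.

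It remains to produce a \emph{continuous} translation path. Using the $H^1$-continuity of $t\mapsto u_C(t)$ together with the precompactness modulo translation just established (which prevents the optimal modulation parameter from escaping to infinity on compact time intervals), a standard selection argument provides $x\in C([0,+\infty);\R)$ for which $\mathcal{B}=\{u_C(t,\cdot-x(t)):t\geq0\}$ is precompact in $H^1$. The corresponding conclusion in the backward case $\|u_C\|_{L^{5k/4}_xL^{5k/2}_{(-\infty,0]}}=\infty$ follows verbatim by replacing $[0,+\infty)$ with $(-\infty,0]$ and reversing the roles of the two perturbation arguments.
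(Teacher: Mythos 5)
Your proposal is correct and follows essentially the same strategy as the paper's proof: reduce the statement to subsequential compactness modulo spatial translations, apply Lemma \ref{Lemma} to $\phi_n=u_C(t_n)$ with $a_n\equiv A_C$ when $t_n\to+\infty$, use the mass/energy Pythagorean expansions together with the criticality of $A_C$ and perturbation theory to force $W_n\to 0$ in $H^1$, rule out $\tau_n\to\pm\infty$, and conclude strong convergence modulo translation. The only cosmetic differences are that you exclude $\tau_n\to\pm\infty$ via the nonlinear profile's forward/backward scattering combined with Proposition \ref{LTPT}, where the paper applies the small data theory (Proposition \ref{SDGT}) directly to $U(t)u_C(t_n)$ on half-lines, and that both your ``standard selection argument'' and the paper's quotient-space reformulation leave the continuity of the path $x(t)$ at the same level of unproved folklore detail.
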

\begin{proof}
The proof is similar to that of Proposition 3.2 in \cite{dhr}. So, we only give the main steps. Assume first $\|u_C\|_{L^{5k/4}_xL^{5k/2}_{[0,+\infty)}}=\infty$. In $H^1$ we let $G\simeq\R$ act as a translation group, that is,
$$
x_0\cdot f=f(\cdot-x_0).
$$
Thus, in the quotient space $H^1/G$ we can introduce the metric
$$
d([f],[g]):=\inf_{x_0\in\R}\|f(\cdot-x_0)-g\|_{H^1},
$$
in such a way that the proof of the proposition is equivalent to establish that the set
$$
\mathcal{C}:=\pi(\{u_C(t), \;t\geq0\})
$$
is precompact in $H^1/G$, where $\pi:H^1\to H^1/G$ is the standard projection.

Now assume, by contradiction, that $\mathcal{C}$ is not precompact in $H^1/G$. Then, we can find a sequence $\{u_C(t_n)\}_{n\in\N}$ and $\varepsilon>0$ such that
\begin{equation}\label{contrGcom}
\inf_{x_0\in\R^3}\|u_C(t_n,\cdot-x_0)-u_C(t_m,\cdot)\|_{H^1}>\varepsilon,
\end{equation}
for all $m,n\in\N$ with $m\neq n$. To obtain a contradiction with \eqref{contrGcom}, it suffices to prove that there exist   a subsequence of $\{u_C(t_n)\}_{n\in\N}$, which we still denote by $\{u_C(t_n)\}_{n\in\N}$, a sequence $\{x_n\}_{ n \in \mathbb{N}}\subset \R$ and a function $v\in H^1$ such that $u_C(\cdot+x_n, t_n)\rightarrow v$ in $H^1$, as $n\rightarrow \infty$. If $\{t_n\}_{n\in\N}$ is bounded, up to a subsequence, we have $t_n\rightarrow \bar{t}\in [0,+\infty)$. So, by the continuity of the solution in $H^1$, the result follows taking $v=u_C(\bar{t})$ and $x_n=0$, for all $n\in \N$. Therefore, we only need to consider the case when $t_n\rightarrow +\infty$.

We already know that $M[u_C]+E[u_C]=A_C$, which implies that $\{u_C(t_n)\}$ is bounded in $H^1$. Applying Lemma \ref{Lemma} with $a_n=A_C$ and $\phi_n=u_{C}(t_n)$, there exist a function $\psi\in  H^1$ and sequences $\{W_n\}_{n \in \mathbb{N}}\subset H^1$, $\{t^1_n\}_{ n \in \mathbb{N}}\subset \R$ and $\{x_n\}_{ n \in \mathbb{N}}\subset \R$, such that for every $n\geq1$,
\begin{equation}\label{ucdecom}
u_{C}(t_n)=U(t^1_n)\psi(\cdot-x_n) + W_n.
\end{equation} 

By the Pythagorean expansion for the mass and energy (see \eqref{PYTHA} with $\lambda=0$ and \eqref{EPEx}), up to a subsequence, we have
$$
M[\psi]+ \lim_{n\rightarrow \infty}M[W_n] = \lim_{n\rightarrow \infty}M[u_{C}(t_n)] =M[u_{C}],
$$
and 
$$
\lim_{n\rightarrow \infty}E[U(t^1_n)\psi(\cdot-x_n)]+ \lim_{n\rightarrow \infty}E[W_n] = \lim_{n\rightarrow \infty}E[u_{C}(t_n)] =E[u_{C}].
$$

We claim that $\lim_{n\rightarrow \infty}M[W_n]= 0$ and $\lim_{n\rightarrow \infty}E[W_n]= 0$. In fact,  on the contrary, we have $M[\psi]+\lim_{n\rightarrow \infty}E[U(t^1_n)\psi(\cdot-x_n)]<A_C$. Thus, if $\bar{u}$ is the nonlinear profile associated with $(\psi,\{t_n^1\})$, we can argue as in \eqref{EXPLA} to obtain $\|\bar{u}\|_{L^{5k/4}_xL^{5k/2}_t}<\infty$, which in turn, following the steps in the proof of Theorem \ref{ECE}, leads to a contradiction.

So, since $\|\partial_x W_n\|\leq 2E[W_n]$, we have
\begin{equation}\label{Wn}
\lim_{n\to\infty}\|W_n\|_{H^1}=0.
\end{equation} 

Our goal now is to show that $\{t_n^1\}_{n\in\N}$ has a convergent subsequence. Indeed, if $t_n^1\rightarrow -\infty$, then 
\begin{align*}
\|U(t)u_C(t_n)\| _{L^{5k/4}_{x} L^{5k/2}_{(-\infty,0]}}\leq &\,\,\,\|U(t+t^1_n)\psi(\cdot-x_n)\| _{L^{5k/4}_{x} L^{5k/2}_{(-\infty,0]}}+\|U(t)W_n\| _{L^{5k/4}_{x} L^{5k/2}_{(-\infty,0]}}\\
\leq &\,\,\, \|U(t)\psi\| _{L^{5k/4}_{x} L^{5k/2}_{(-\infty,t^1_n]}}+\|W_n\| _{H^1}\rightarrow  \,\,\,0, \peq \mbox{as}\peq n\to\infty,
\end{align*}
 by \eqref{Wn} and the fact that $t_n^1\rightarrow -\infty$.
So, since $\underset{n\in \N}{\sup} \|u_C(t_n)\|_{H^1}\leq (2A_C)^{1/2}$ (see \eqref{uCtn}), for $n$ large we have
$$
\|U(t)u_C(t_n)\| _{L^{5k/4}_{x} L^{5k/2}_{(-\infty,0]}}\leq \delta,
$$
where $\delta=\delta((2A_C)^{1/2})$ is given by Proposition \ref{SDGT} (Small Data Theory) and then
$$
\|\textrm{KdV}(t)u_C(t_n)\| _{L^{5k/4}_{x} L^{5k/2}_{(-\infty,0]}}\leq 2\delta.
$$
Note that $\textrm{KdV}(t)u_C(t_n)=u_C(t+t_n)$, in particular, the previous inequality can be rewritten as
$$
\|u_C\| _{L^{5k/4}_{x} L^{5k/2}_{(-\infty,t_n]}}\leq 2\delta.
$$
Since $t_n\rightarrow +\infty$ we finally have 
\begin{equation*}
\|u_C\| _{L^{5k/4}_{x} L^{5k/2}_t}\leq 2\delta,
\end{equation*}
 which is a contradiction. 
 
On the other hand, if $t_n^1\rightarrow +\infty$ we use a similar argument  to deduce, for $n$ large, that 
\begin{equation*}
\|u_C\| _{L^{5k/4}_{x} L^{5k/2}_{[t_n,+\infty)}}=\|\textrm{KdV}(t)u_C(t_n)\| _{L^{5k/4}_{x} L^{5k/2}_{[0,+\infty)}}\leq 2\delta.
\end{equation*}
Now, applying Remark \ref{Rema-SDT} we have $\|u_C\| _{L^{5k/4}_{x} L^{5k/2}_{[0,+\infty)}}<\infty$, which is also a contradiction. 

So, we can assume that, up to a subsequence, there exists $t^1\in \R$ such that $t_n^1\rightarrow t^1$, as $n\rightarrow \infty$. Moreover, from \eqref{ucdecom}, \eqref{Wn} and the continuity of the linear flow we have
$$
u_{C}(t_n,\cdot+x_n) \rightarrow U(t^1)\psi \quad \textrm{in} \quad H^1, \quad \mbox{as}\;\; n\rightarrow \infty.
$$

In the case $\|u_C\|_{L^{5k/4}_xL^{5k/2}_{(-\infty,0]}}=\infty$ a similar proof leads to the precompactness of $\{u_C(t,\cdot-x(t)): t\leq 0\}$ and this completes the proof of Proposition \ref{Precomp}.
\end{proof}


\section{Rigidity theorem and extinction of the critical solution}\label{sec6}

In this section we will prove that the critical solution constructed in Section \ref{sec5} cannot exist (see Proposition \ref{finalprop}), which in turn will complete the proof of Theorem \ref{global4}. Our argument is based on the Kenig-Merle compactness/rigidity method (see, for instance, \cite{kenig-merle} and \cite{kenig-merle2}). In particular the rigidity is obtained due to a suitable version of the interaction Morawetz type estimates
adapted to the gKdV equation.

We introduce the following densities:
\begin{equation}\label{densities}
\begin{cases}
\rho(t, x)=(u(t,x))^2\\
e(t, x)=\frac{1}{2} |\partial_x u(t, x)|^2 + \frac 1{k+2} (u(t,x))^{k+2}
\\j(t,x)=3 (\partial_x u(t,x))^2+\frac{2(k+1)}{k+2} (u(t,x))^{k+2}
\\k(t,x)=\frac 32 (\partial_{xx} u(t,x))^2
+2(\partial_x u(t,x))^2 (u(t,x))^{k}+\frac 12 (u(t,x))^{2k+2},
\end{cases}
\end{equation}
which are well defined provided that $u(t)\in H^2$.
Then we get the following relations:
\begin{align}\label{transp}
\partial_t \rho + \partial_{xxx} \rho&= \partial_x j,\\\nonumber
\partial_t e + \partial_{xxx} e&= \partial_x k.
\end{align}
Moreover we have the following key property (see \cite[Theorem 2]{tao1}):
\begin{equation}\label{keyobservation}
\Big (\int \rho(x) dx \Big) \cdot \Big( \int k(x) dx\Big)> 
\Big( \int e(x) dx\Big) \cdot \Big( \int 
j(x) dx\Big)
\end{equation}
where $\rho(x), e(x), j(x), k(x)$ are respectively 
the expressions above at time $t=0$, namely 
$\rho(0,x), e(0,x), j(0,x), k(0,x)$ and provided that $u(0, x)=u_0(x)\neq 0$.

\subsection{Local smoothing and continuity of the flow}

In the next sections we shall use the following facts which are well known
for the flow associated with \eqref{gkdv}. Recall we denote by $\KdV(t)$ the flow associated 
with \eqref{gkdv} which, by Theorem \ref{global5}, is well-defined globally in time in the space $H^1$.
Next we describe other key properties of the flow map that will be crucial in the sequel.

\begin{proposition}\label{regprop}
For every $t\in \mathbb R$ we have $\KdV(t) \in  C(H^s(\mathbb R); H^s(\mathbb R))$, $s=1,2$. Moreover we have the following 
local smoothing property,
$$
u_0\in H^1 \Rightarrow \KdV(t)u_0\in H^2_{loc} \hbox{ a.e. } t\in \mathbb R.
$$ 
\end{proposition}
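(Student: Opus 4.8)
The plan is to establish the two assertions separately: the continuity of the flow map on $H^s$ ($s=1,2$), and the one-derivative local smoothing gain.

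\textbf{Continuity of the flow.} Since $1,2>s_k=\frac12-\frac2k$, the sharp well-posedness theory of \cite{kpv1} applies in both $H^1$ and $H^2$ and, on each local existence interval, furnishes a solution map depending Lipschitz-continuously on the datum in the $H^s$ norm. For $s=1$ the global bound $\sup_{t}\|\KdV(t)u_0\|_{H^1}\le (M[u_0]+2E[u_0])^{1/2}$ of Theorem \ref{global5} allows one to cover any interval $[0,t]$ by finitely many local intervals of uniform length, so the local Lipschitz estimates chain together into continuity of $\KdV(t)\colon H^1\to H^1$ for each fixed $t$. For $s=2$ I would first propagate $H^2$ regularity on finite time intervals: differentiating \eqref{gkdv} twice and running the energy estimate for $\partial_x^2u$ (the Airy term integrates away), the nonlinear contributions are controlled by the global $H^1$ bound together with the Strichartz and Kato smoothing norms of Lemmas \ref{lemma1}--\ref{kpvlemma}, and Gronwall's inequality yields $\sup_{|t|\le T}\|\KdV(t)u_0\|_{H^2}<\infty$. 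Chaining the local Lipschitz estimates as before then gives continuity of $\KdV(t)\colon H^2\to H^2$.

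\textbf{Local smoothing.} For the second statement I would argue by approximation together with a Kato-type smoothing estimate. Fix $u_0\in H^1$, write $u=\KdV(t)u_0$ and set $K:=\sup_t\|u(t)\|_{H^1}<\infty$. Pick $u_{0,m}\in H^2$ with $u_{0,m}\to u_0$ in $H^1$; by the continuity just established $u_m:=\KdV(t)u_{0,m}\to u$ in $C([-T,T];H^1)$ for every $T$, and each $u_m$ is an $H^2$ solution. The key ingredient is the uniform local smoothing bound
\[
\int_0^T\!\!\int_{-R}^{R}|\partial_x^2u_m(t,x)|^2\,dx\,dt\le C(R,T,K),
\]
in which the constant depends on $u_{0,m}$ only through the (uniformly bounded) quantity $K$. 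Granting this, $\{\partial_x^2u_m\}_m$ is bounded in $L^2([0,T]\times[-R,R])$ and hence converges weakly along a subsequence; since $u_m\to u$ in $C(H^1)$ the limit is forced to be $\partial_x^2u$, so $\partial_x^2u\in L^2([0,T]\times[-R,R])$ for all $R,T$. Exhausting $R,T$ over the integers and applying Fubini's theorem then yields $u(t)\in H^2_{loc}$ for a.e.\ $t\in\R$.

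\textbf{The main obstacle.} I expect the uniform bound to be the heart of the matter. I would derive it by the classical Kato multiplier computation performed one derivative higher: apply $\partial_x$ to \eqref{gkdv} to obtain the equation for $v=\partial_xu_m$, multiply by $\varphi(x)v$ with $\varphi$ a bounded, increasing weight whose derivative $\varphi'$ is a fixed positive bump over $[-R,R]$, and integrate over $[0,T]\times\R$. Integration by parts against $\partial_{xxx}$ produces, thanks to $\varphi'\ge0$, the coercive spacetime term $\tfrac32\int_0^T\!\int\varphi'(x)|\partial_x^2u_m|^2$, while the transport and boundary contributions are bounded by $K$. After distributing derivatives in the nonlinearity by the Leibniz rule and placing the lower-order factors in $L^\infty$ via $H^1(\R)\hookrightarrow L^\infty(\R)$, the nonlinear terms are absorbed into the coercive term or controlled by $K$ and by the first-order smoothing already available. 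The delicate point is exactly this bookkeeping, namely ensuring that every constant depends on $u_{0,m}$ solely through the conserved-quantity bound $K$, so that the estimate is stable under $m\to\infty$; this is where the persistence of regularity and the Kato smoothing effect invoked in the Introduction are genuinely used.
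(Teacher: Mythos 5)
Your proposal follows exactly the route the paper itself takes: the paper's proof of Proposition \ref{regprop} consists of two citations (continuity of the flow from \cite{kpv1}; local smoothing from Kato's argument in \cite{K83} together with the local results of \cite{kpv1}), and your skeleton --- chaining the local Lipschitz theory with the conserved-quantity bounds for continuity, then approximating by $H^2$ solutions, proving a uniform weighted bound, and passing to weak limits plus Fubini for the smoothing --- is a faithful reconstruction of those cited arguments.

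There is, however, a genuine gap precisely at the point you flag. Carrying out the multiplier computation for $v_m=\partial_x u_m$ (multiply the once-differentiated equation by $\varphi v_m$ and integrate by parts), the nonlinear contribution is
\[
-\frac{k+1}{2}\int_0^T\!\!\int \varphi'\,u_m^{k}\,(\partial_x u_m)^2\,dx\,dt
\;+\;\frac{k(k+1)}{2}\int_0^T\!\!\int \varphi\,u_m^{k-1}\,(\partial_x u_m)^3\,dx\,dt .
\]
The first term is harmless, but the cubic term carries the weight $\varphi$, \emph{not} $\varphi'$: since $\varphi$ is bounded, increasing and nonconstant, it is bounded below on a half-line where $\varphi'\equiv 0$, so on that region there is nothing coercive to absorb into; moreover $H^1(\R)\hookrightarrow L^\infty(\R)$ controls $u_m$ but not $\partial_x u_m$, and the first-order smoothing $\int\varphi'(\partial_x u_m)^2$ does not control a weighted $L^3$ norm of $\partial_x u_m$ over a half-line. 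Hence neither mechanism you invoke closes this term, and your key uniform bound is not yet proved. The missing ingredient is a space--time integrability bound on $\partial_x u_m$, uniform in $m$: for instance $\|\partial_x u_m\|_{L^5_xL^{10}_{[0,T]}}\le C(K,T)$, which the paper's own machinery supplies (Remark \ref{Rema-SDT} combined with the argument in the proof of Proposition \ref{PROPSCAT}, whose constants depend only on $\sup_m\|u_{0,m}\|_{H^1}$ and $T$); interpolating against the conserved bound $\|\partial_x u_m\|_{L^\infty_TL^2_x}\le K$ on the finite interval gives $\int_0^T\!\int|\partial_x u_m|^3\,dx\,dt\le C(K,T)$, and the cubic term is then $\lesssim K^{k-1}C(K,T)$. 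The same ingredient is needed to make your Gronwall argument for $H^2$-persistence quantitative, since $\|\partial_x u\|_{L^\infty_x}$ is not controlled by $K$ either. Alternatively, you can dispense with the weighted identity and the approximation altogether: the retarded, $TT^*$ version of the Kato smoothing estimate \eqref{reg11} from \cite{kpv1} gains two derivatives,
\[
\Big\|\partial_x^2\int_0^t U(t-t')g(t')\,dt'\Big\|_{L^\infty_xL^2_T}\lesssim \|g\|_{L^1_xL^2_T},
\]
and applying it to the Duhamel formula with $g=\partial_x(u^{k+1})=(k+1)u^k\partial_x u\in L^1_xL^2_T$ (H\"older with the two Strichartz norms above) yields $\partial_x^2 u\in L^\infty_xL^2_T$ directly, hence $u(t)\in H^2_{loc}$ for a.e.\ $t$; this is presumably what ``the local results in \cite{kpv1}'' in the paper's proof refers to.
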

\begin{proof}
The continuity of the flow of $\KdV(t)$ is proved for instance in \cite{kpv1}. The second affirmation follows the same argument given
by Kato in \cite{K83} and the local results in \cite{kpv1}.
\end{proof}

\subsection{Interactive a priori estimates}

Here we give an interactive a priori estimate concerning the densities introduced above.

\begin{lemma} Let us fix
	$Q\in C^\infty(\mathbb R)$ such that
	\begin{itemize}
		\item $Q'(x)\geq 0$;
		\item $Q'(x)=0$ for $|x|>R_0$.
	\end{itemize}
	Assume  that $u(t)$ solves \eqref{gkdv} with initial
	condition $u_0\in H^1$, then for any $t_1<t_2$, we have
	\begin{align}\label{ideens}
	&\int\int Q(x-y) \rho(t_2, x)e(t,y) dx dy- 
	\int\int Q(x-y) \rho(t_1, x)e(t,y) dx dy \geq \\\nonumber
	&-\int_{t_1}^{t_2} \int\int Q^{'}(x-y)  j(t, x)e(t,y) dx dy dt+ 
	\int_{t_1}^{t_2}\int\int Q^{'}(x-y) \rho(t, x) k(t,y) dx dydt,
	\end{align}
	where on the r.h.s. we assume to be equal to $+\infty$ 
	for every $t$ such that $u(t)\notin H^2_{loc}$.
\end{lemma}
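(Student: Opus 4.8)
The plan is to introduce the interaction functional
$$
M(t)=\int\int Q(x-y)\,\rho(t,x)\,e(t,y)\,dx\,dy,
$$
so that the left-hand side of \eqref{ideens} is $M(t_2)-M(t_1)$ (the density $e$ on that side is read at the corresponding endpoint time). I would prove the corresponding \emph{identity} first for smooth, rapidly decaying solutions, where every manipulation is legitimate, and then recover the stated inequality for merely $H^1$ data by approximation. Concretely, take data $u_0^{(n)}$ smooth with $u_0^{(n)}\to u_0$ in $H^1$; by persistence of regularity the solutions $u^{(n)}(t)$ stay in $H^s$ for every $s$, and by Proposition~\ref{regprop} one has $u^{(n)}\to u$ in $C([t_1,t_2];H^1)$.

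For such a smooth solution $M(t)$ is differentiable, and using the transport relations \eqref{transp} in the form $\partial_t\rho=\partial_x j-\partial_{xxx}\rho$ and $\partial_t e=\partial_y k-\partial_{yyy}e$, differentiation under the integral sign gives
\begin{align*}
M'(t)=&\int\int Q(x-y)\big(\partial_x j-\partial_{xxx}\rho\big)(t,x)\,e(t,y)\,dx\,dy\\
&+\int\int Q(x-y)\,\rho(t,x)\big(\partial_y k-\partial_{yyy}e\big)(t,y)\,dx\,dy.
\end{align*}
Integrating by parts in $x$ and $y$ (using $\partial_xQ(x-y)=Q'(x-y)=-\partial_yQ(x-y)$, with all boundary terms vanishing by the decay of $H^s$ functions and the boundedness of $Q$, which follows from $Q'\geq0$ being compactly supported), the first-order terms produce $-\int\int Q'\,j\,e$ and $+\int\int Q'\,\rho\,k$, while the two third-order terms each produce $\pm\int\int Q'''\rho\,e$ and hence cancel. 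Thus $M'(t)=-\int\int Q'(x-y)\,j(t,x)\,e(t,y)+\int\int Q'(x-y)\,\rho(t,x)\,k(t,y)$, and integrating over $[t_1,t_2]$ gives \eqref{ideens} \emph{with equality} for every smooth solution.

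To pass to the limit, note that the left-hand side and the $Q'je$ term involve only $\rho,e,j$, which are controlled by the $H^1$ norm (Sobolev embedding $H^1\hookrightarrow L^\infty\cap L^{k+2}$); since $u^{(n)}\to u$ in $C([t_1,t_2];H^1)$ one obtains $L^1$-convergence of these densities uniformly in $t$, hence convergence of $M^{(n)}(t_i)$ and of $\int_{t_1}^{t_2}\int\int Q'\,j^{(n)}e^{(n)}$. The delicate term is the last one, and here the sign structure is decisive: $Q'\geq0$, $\rho=u^2\geq0$, and $k\geq0$ (the latter because $k$ is even, so $u^k$ and $u^{2k+2}$ are nonnegative in \eqref{densities}). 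I would establish the lower-semicontinuity estimate
$$
\int_{t_1}^{t_2}\!\!\int\int Q'\rho\,k\ \leq\ \liminf_{n\to\infty}\int_{t_1}^{t_2}\!\!\int\int Q'\rho^{(n)}k^{(n)},
$$
which, combined with the identity for $u^{(n)}$ and the convergence of the other two pieces, yields exactly \eqref{ideens}; the $+\infty$ convention on the right simply absorbs the measure-zero set of times where $u(t)\notin H^2_{loc}$, and the argument automatically shows the right-hand side is in fact finite.

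The main obstacle is precisely this lower-semicontinuity step, because the only non--lower-order part of $k$ is $\tfrac32(\partial_{xx}u)^2$ and we have no uniform control of the global $H^2$ norm of $u^{(n)}$. To handle it I would invoke the Kato smoothing effect \eqref{reg11} (applied to the equation satisfied by $\partial_x u$) together with persistence of regularity to obtain a bound $\int_{t_1}^{t_2}\int_{K}(\partial_{xx}u^{(n)})^2\,dx\,dt\leq C(K,\|u_0\|_{H^1})$ uniform in $n$ on every compact $K$, which suffices since $Q'$ has compact support. This gives $\partial_{xx}u^{(n)}\rightharpoonup\partial_{xx}u$ weakly in $L^2_{loc}([t_1,t_2]\times\mathbb R)$; the lower-order contributions to $k$ converge strongly via $H^1$ and Sobolev, while the weight $g^{(n)}(t,y)=\int Q'(x-y)\rho^{(n)}(t,x)\,dx\geq0$ converges uniformly to $g\geq0$, so weak lower semicontinuity of the nonnegative quadratic form $v\mapsto\int\int g\,|v|^2$ delivers the desired inequality. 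Securing the uniform local $L^2_{t,x}$ bound on the second derivatives through the smoothing estimate is the crux of the whole argument.
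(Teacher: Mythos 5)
Your proposal is correct in outline and shares the paper's overall architecture: prove \eqref{ideens} as an \emph{identity} for regular solutions by differentiating the interaction functional, using \eqref{transp} and integration by parts (the third-order terms cancel exactly as you say), then approximate $u_0$ in $H^1$ by regular data and pass to the limit, the only delicate point being lower semicontinuity of the term involving $Q'(x-y)\,\rho_n(t,x)\,k_n(t,y)$, which contains $|\partial_{xx}u_n|^2$. Where you genuinely diverge from the paper is in how this last step is handled. You propose to first establish a \emph{quantitative, uniform-in-$n$} local smoothing bound $\int_{t_1}^{t_2}\int_K(\partial_{xx}u^{(n)})^2\,dx\,dt\leq C\bigl(K,\|u_0\|_{H^1}\bigr)$, deduce $\partial_{xx}u^{(n)}\rightharpoonup\partial_{xx}u$ weakly in $L^2_{loc}$, and conclude by weak lower semicontinuity of a nonnegative weighted quadratic form. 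The paper never proves (nor needs) any uniform bound: it applies Fatou's lemma twice, first in $t$ and then in $x$, which reduces matters, for a.e.\ fixed $t$ and each fixed $x$, to the one-dimensional quantity $\int Q'(x-y)|\partial_{yy}u_n(t,y)|^2\,dy$, whose $y$-integration is automatically confined to the ball $|y-x|\leq R_0$; there one argues by dichotomy: if the liminf is infinite the inequality is trivial, and if it is finite then $\sqrt{Q'(x-\cdot)}\,\partial_{yy}u_n(t,\cdot)$ is bounded in $L^2$ of that ball, converges weakly to $\sqrt{Q'(x-\cdot)}\,\partial_{yy}u(t,\cdot)$, and weak lower semicontinuity of the norm finishes. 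This soft argument uses only the qualitative a.e.-in-$t$ $H^2_{loc}$ regularity of Proposition~\ref{regprop}. Your route buys more (finiteness of the right-hand side with explicit bounds, and in principle an identity rather than an inequality), but at the price of the uniform smoothing estimate, which is exactly the crux you flag and which is \emph{not} available off the shelf here: Proposition~\ref{regprop} is purely qualitative and \eqref{reg11} is a linear estimate, so you would have to run either Kato's weighted energy argument or a Kenig--Ponce--Vega inhomogeneous double-smoothing estimate of the form $\|\partial_x^2\int_0^t U(t-t')g\,dt'\|_{L^\infty_xL^2_T}\lesssim\|g\|_{L^1_xL^2_T}$, combined with the uniform finite-interval Strichartz control of Remark~\ref{Rema-SDT}, to make it rigorous. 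That is doable but constitutes a genuine extra piece of work that the paper's Fatou/dichotomy argument avoids entirely.

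One precision issue in your sketch: the justification \emph{``which suffices since $Q'$ has compact support''} is not quite right, because the $y$-marginal weight $g^{(n)}(t,y)=\int Q'(x-y)\rho^{(n)}(t,x)\,dx$ is merely decaying, not compactly supported in $y$, so compact support of $Q'$ alone does not localize the $y$-integral. The step is easily repaired — since the integrand is nonnegative you may restrict to $(t_1,t_2)\times K$, run the weak lower semicontinuity there (using that $g^{(n)}\to g$ uniformly), and then exhaust $\mathbb{R}$ by compacts via monotone convergence — but as written the localization claim has a gap. Note that the paper sidesteps this too: applying Fatou in $x$ \emph{before} touching the $y$-integral is precisely what confines $y$ to a fixed ball.
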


\begin{remark}
	Let us notice that the l.h.s. as well as all the terms of the r.h.s., except 
	the one involving $Q^{'}(x-y) \rho(t, x) k(t,y)$, are well-defined for every solution
	belonging to $H^1$, since they do not involve second derivatives of the solution.
	On the contrary, the aforementioned term involves the term $|\partial_{xx} u|^2$
	and hence in principle the quantity is not necessarily finite.
	As a consequence of the estimate above we deduce that in fact necessarily, for almost every time we have that  $\int \int Q^{'}(x-y) \rho(t, x) k(t,y) dx dy<\infty$.
	In fact this is something well--known due to the local smoothing effect on the gain of one derivative locally in space.
	
\end{remark}

\begin{proof}
First we assume $u_0\in H^2$ and hence by the persistence property (see Proposition \ref{regprop}) we have
	$u(t)\in H^2$. In this situation
	we can justify all the computations that we write below and we get a stronger version
	of the desired estimate, since we get an identity.
	Notice that we have
	$$\int\int Q(x-y) \rho(t, x)e(t,y) dx dy$$
	and, by \eqref{transp}, we get
	\[
	\begin{split}
	\frac d{dt} 
	\int&\int Q(x-y) \rho(t, x)e(t,y) dx dy\\
	= 
	&\int\int Q(x-y) \partial_t \rho(t, x)e(t,y) dx dy+ 
	\int\int Q(x-y) \rho(t, x)\partial_t e(t,y) dx dy\\
	= 
	&-\int\int Q(x-y) \partial_{xxx} \rho(t, x)e(t,y) dx dy- 
	\int\int Q(x-y) \rho(t, x)\partial_{yyy} e(t,y) dx dy
	\\
	&+\int\int Q(x-y) \partial_x j(t, x)e(t,y) dx dy+ 
	\int\int Q(x-y) \rho(t, x)\partial_y k(t,y) dx dy
	\end{split}
	\]
	and, by integration by parts,
		\[
	\begin{split}
	\frac d{dt} 
	&\int\int Q(x-y) \rho(t, x)e(t,y) dx dy\\ 
	&=-\int\int Q^{'}(x-y)  j(t, x)e(t,y) dx dy+ 
	\int\int Q^{'}(x-y) \rho(t, x) k(t,y) dx dy.
	\end{split}
\]
We conclude by integration w.r.t.  $t$ variable.
	
	Next we work by density, more precisely given $u_0\in H^1$ choose
	$u_{n,0}\in H^2$ such that $u_{n,0}\rightarrow u_0$ in $H^1$.
	By continuous dependence we have 
	$u_n(t)\rightarrow u(t)$ in $H^1$ for every $t\in \mathbb R$,
	where $u_n(t)$ and $u(t)$ are, respectively, the solutions
	to \eqref{gkdv} associated with $u_{n,0}$ and $u_0$.
	Next recall that by the previous step we have that 
	\eqref{ideens} is satisfied under the stronger form of an identity provided that we replace
	$\rho(t,x), e(t,x), j(t,x), k(t,x)$ by $\rho_n(t,x), e_n(t,x), j_n(t,x), k_n(t,x)$,
	which are the corresponding densities computed along the solutions $u_n(t,x)$.
	It is easy to check that we can pass to the limit in all the terms involved 
	in the identity except the term 
	$$\int_{t_1}^{t_2}\int\int Q^{'}(x-y) \rho_n(t, x) k_n(t,y) dx dy dt$$
	since it involves two derivatives of $u_n$ and hence the convergence
	of $u_n(t)$ to $u(t)$ in $H^1$ does not allows to pass to the limit.
	
	Nevertheless,  to conclude the desired inequality \eqref{ideens}, it is sufficient to show that
	\begin{equation*}
	\begin{split}
	\liminf_{n\rightarrow \infty} & \int_{t_1}^{t_2}\int\int Q^{'}(x-y) \rho_n(t, x) k_n(t,y) dx dy dt\\
	\geq  &\int_{t_1}^{t_2}\int\int Q^{'}(x-y) \rho(t, x) k(t,y) dx dy dt 
	\end{split}
	\end{equation*}
	and in fact (by looking at the expression of $k(t,x)$ and $\rho(t,x)$ and by recalling that
	we can pass to the limit along all the expressions that do not involve 
	second derivatives) it is sufficient to show
	\begin{equation*}
	\begin{split}
	\liminf_{n\rightarrow \infty}& \int_{t_1}^{t_2}\int\int Q^{'}(x-y) |u_n(t, x)|^2 |\partial_{xx} u_n(t,y)|^2 dx dy dt\\
	\geq &\int_{t_1}^{t_2}\int\int Q^{'}(x-y) |u(t, x)|^2 |\partial_{xx} u(t,y)|^2 dx dy dt.
	\end{split}
	\end{equation*}
	By Fatou's lemma w.r.t. to time integration it is sufficient to show that
	\begin{align}\label{prprim}
	\liminf_{n\rightarrow \infty} 
	\int\int Q^{'}(x-y) |u_n(t, x)|^2 |\partial_{xx} u_n(t,y)|^2 dx dy 
	\\\nonumber\geq \int\int Q^{'}(x-y) |u(t, x)|^2 |\partial_{xx} u(t,y)|^2 dx dy,
	\hbox{ a.e. } t\in \mathbb R.
	\end{align}
	To prove this fact we use again Fatou's lemma w.r.t.  $x$ variable and hence it is sufficient to show that 
	\begin{align}\label{seconded}
	 \liminf_{n\rightarrow \infty}
	&\int Q^{'}(x-y) |\partial_{xx} u_n(t,y)|^2 dy
	\\\nonumber & \geq \int Q^{'}(x-y) |\partial_{xx} u(t,y)|^2 dy, \quad \forall x\in \mathbb R, \hbox{ a.e. } t\in 
	\mathbb R.
	\end{align}
	Indeed, by combining this estimate with the fact that 
	$|u_n(t,x)|^2\rightarrow |u(t, x)|^2$ for $a.e.\; x\in \mathbb R$ (that follows from 
	$u_n(t,x)\rightarrow u(t,x)$ in $H^1$), 
	we can conclude \eqref{prprim} by Fatou's lemma w.r.t. $x$ variable.
	Next we prove \eqref{seconded}. 
	We choose $t\in \mathbb R$ such that $u(t)\in H^2_{loc}(\R)$ (as a consequence of Proposition \ref{regprop} it can be done for a.e. 
	$t\in \R$), then we have two possibilities: either 
	$$
	\liminf_{n\rightarrow \infty}\int Q^{'}(x-y) |\partial_{yy}u_n(t,y)|^2 dy=\infty
	$$
	and \eqref{seconded} is trivial, or we have 
	$$
	\liminf_{n\rightarrow \infty}\int Q^{'}(x-y) |\partial_{yy}u_n(t,y)|^2 dy<\infty.
	$$
	Hence $u_n(t,x)\rightarrow u(t,x) \hbox{ in } H^1$ implies 
	$$ 
	\sqrt {Q^{'}(x-y)} \partial_{yy} u_n(t,y)\rightarrow \sqrt {Q^{'}(x-y)} \partial_{yy} u(t,y), \hbox{ weakly in } L^2(B_R),
	$$
	from which we deduce \eqref{seconded}.
	\end{proof}

\subsection{Additional properties of the critical solution}

Next we shall make use of the cut-off function
$\varphi_R(x)=\varphi\big( \frac xR \big)$, where
$\varphi(x)\in C^\infty_0 (\mathbb R), \varphi(x)\in [0,1], \varphi(x)=0$ for $|x|>1$ and 
$\varphi(x)=1$ for $|x|<1/2$.

\begin{lemma}\label{fipa}
	Assume that
	$u\in C(\mathbb R; H^1(\mathbb R)\setminus\{0\})$ is a solution of \eqref{gkdv} such that
	\begin{equation}\label{lowerbound}
	\inf_{t\in\R} \|u(t)\|_{L^2}>0;
	\end{equation}
	\begin{equation*}
	u(t)\in H^2_{loc} \hbox{ a.e. } t\in \mathbb R;
	\end{equation*}
	\begin{equation}\label{hyp}\exists x(t)\in \mathbb R \hbox{ s.t. } \{u(t, \cdot-x(t)), t\geq0
	\}\subset H^1
	\hbox{ is precompact in } H^1.
	\end{equation} Then 
	we have the following vanishing property
	\begin{equation}\label{van}
	\lim_{R\rightarrow \infty} \Big(\sup_{t\geq0}
	\int_{|x-x(t)|>R} |\partial_x u(t,x)|^2 + |u(t,x)|^2 + |u(t,x)|^{p} dx\Big)=0
	\end{equation}
	for every fixed $2\leq p<\infty$.
	Moreover, there exist
	$\bar \epsilon>0$ and $\bar R>0$ such that
	\begin{equation}\label{keyobservation1}
	\begin{split}
	&\Big(\int \rho_{R,x(t)}(t,x) dx \Big) \cdot \Big( \int k_{R,x(t)}(t,x) dx\Big)
	\\ -
	\Big( \int e_{R,x(t)}(t,x) dx\Big) & \cdot \Big( \int
	j_{R,x(t)}(t,x) dx\Big)>\bar \epsilon, \quad \forall R>\bar R, \quad \hbox{ a.e. } t\geq0,
	\end{split}
	\end{equation}
	where $\rho_{R,x(t)}(t,x), j_{R,x(t)}(t,x), e_{R,x(t)}(t,x), k_{R,x(t)}(t,x)$ are the densities \eqref{densities} computed on the functions
	$\varphi_R(x-x(t)) u(t,x)$. 
\end{lemma}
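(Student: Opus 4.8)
The plan is to derive both conclusions from the precompactness hypothesis \eqref{hyp} together with Tao's pointwise-in-time inequality \eqref{keyobservation} applied to spatial truncations of $u(t)$. Throughout, after the change of variables $y=x-x(t)$ and using the translation invariance of all the integral densities in \eqref{densities}, I write $w_t$ for the solution recentered at $x(t)$; by \eqref{hyp} the family $\{w_t:t\geq0\}$ lies in a fixed precompact set $\mathcal K\subset H^1$, and every quantity $\int\rho_{R,x(t)}$, $\int e_{R,x(t)}$, $\int j_{R,x(t)}$, $\int k_{R,x(t)}$ equals the corresponding density of \eqref{densities} evaluated on $\varphi_R w_t$.

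First I would prove the vanishing property \eqref{van}. After recentering, its left-hand side becomes $\sup_{t\geq0}\int_{|y|>R}(|\partial_y w_t|^2+|w_t|^2+|w_t|^p)\,dy$. A single $H^1$ function has $H^1$-tails (and, via $H^1(\R)\hookrightarrow L^p$, $L^p$-tails) tending to $0$ as $R\to\infty$; since $\{w_t\}$ is totally bounded, a finite $\eta$-net argument makes these tails equismall: cover $\{w_t\}$ by balls $B(g_i,\eta)$, $i=1,\dots,N$, and take $R$ larger than the tail-radii of the finitely many $g_i$. For the $L^p$ piece one inserts a smooth cutoff $\chi_R$ equal to $1$ on $\{|y|>R\}$ and supported in $\{|y|>R/2\}$ and bounds $\|w_t\mathbf 1_{|y|>R}\|_{L^p}\lesssim\|\chi_R w_t\|_{H^1}$, which is again a (uniform) $H^1$-tail. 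This yields \eqref{van}.

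For \eqref{keyobservation1}, observe first that for every $R$ and a.e. $t$ (those with $u(t)\in H^2_{loc}$, which by Proposition \ref{regprop} is a.e.\ $t$) the truncation $\varphi_R w_t$ is a compactly supported $H^2$ function, nonzero for $R$ large by \eqref{lowerbound}; hence \eqref{keyobservation} applied to $\varphi_R w_t$ already gives strict positivity of the left-hand side of \eqref{keyobservation1}, while for the remaining $t$ the term $\int k_{R,x(t)}$ is $+\infty$ and the inequality is trivial. The real content is the uniform lower bound $\bar\epsilon$, which I would obtain by contradiction. If no $\bar\epsilon,\bar R$ work, choose $R_n\to\infty$ and $t_n\geq0$ with $\Delta_n:=(\int\rho_{R_n})(\int k_{R_n})-(\int e_{R_n})(\int j_{R_n})\to0$. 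By \eqref{hyp} and \eqref{lowerbound}, a subsequence of $w_{t_n}$ converges in $H^1$ to some $w$ with $\|w\|_{L^2}\geq\inf_t\|u(t)\|_{L^2}>0$, so $w\neq0$; using $\|\varphi_R\|_{W^{1,\infty}}\lesssim1$ uniformly in $R$ and $w\in H^1$ one checks $\varphi_{R_n}w_{t_n}\to w$ in $H^1$. Therefore the three ``first order'' functionals, being $H^1$-continuous, converge: $\int\rho_{R_n}\to\|w\|_{L^2}^2>0$, $\int e_{R_n}\to E[w]$, $\int j_{R_n}\to\int j[w]$.

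The main obstacle is the term $\int k_{R_n}$, which involves $\partial_{xx}(\varphi_{R_n}w_{t_n})$ and need not converge under mere $H^1$ convergence; I would dispose of it by a dichotomy. If $\int k_{R_n}\to\infty$, then since $\int\rho_{R_n}\to\|w\|_{L^2}^2>0$ we get $\Delta_n\to+\infty$, contradicting $\Delta_n\to0$. Otherwise, along a further subsequence $\int k_{R_n}\to K<\infty$; the resulting boundedness of $\|\partial_{xx}(\varphi_{R_n}w_{t_n})\|_{L^2}$ shows that $\varphi_{R_n}w_{t_n}$ is bounded in $H^2$ and hence converges weakly in $H^2$ to $w$ (the weak $H^2$ limit must agree with the strong $H^1$ limit), so $w\in H^2$ and, by weak lower semicontinuity, $\liminf\|\partial_{xx}(\varphi_{R_n}w_{t_n})\|_{L^2}^2\geq\|\partial_{xx}w\|_{L^2}^2$; the lower-order pieces $2(\partial_x\,\cdot\,)^2(\cdot)^k+\tfrac12(\cdot)^{2k+2}$ of the density $k$ converge by $H^1(\R)\hookrightarrow L^\infty(\R)$. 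Thus $K\geq\int k[w]$, and $\Delta_n\to\|w\|_{L^2}^2K-E[w]\int j[w]\geq\|w\|_{L^2}^2\int k[w]-E[w]\int j[w]>0$ by \eqref{keyobservation} applied to the nonzero $w\in H^2$. This contradicts $\Delta_n\to0$ and produces the uniform lower bound $\bar\epsilon$, valid for all $R>\bar R$ and a.e.\ $t\geq0$, completing \eqref{keyobservation1}.
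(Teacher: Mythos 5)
Your proof is correct and follows essentially the same strategy as the paper's: uniform tail-smallness from precompactness gives \eqref{van}, and \eqref{keyobservation1} is obtained by contradiction, combining $H^1$-convergence of $\varphi_{R_n}w_{t_n}$ to a nonzero limit, a dichotomy that rules out $\int k_{R_n}\to\infty$ (the paper phrases this as reducing to $\sup_n\|\varphi_{R_n}u(t_n)\|_{H^2}<\infty$), weak $H^2$ lower semicontinuity, and Tao's inequality \eqref{keyobservation} applied to the limit. The only differences are cosmetic: you prove \eqref{van} directly via a finite $\eta$-net where the paper argues by contradiction with sequential compactness.
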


\begin{remark}
If instead of \eqref{hyp} we assume
\begin{equation}\label{hyp1}
\exists x(t)\in \mathbb R \hbox{ s.t. } \{u(t, \cdot-x(t)), t\leq0
\}\subset H^1
\hbox{ is precompact in } H^1
	\end{equation}
	then a similar conclusion is true. Since, from Proposition \ref{Precomp} we have that \eqref{hyp} or \eqref{hyp1} hold, in what follows we will suppose, without loss of generality, that \eqref{hyp} holds.
\end{remark}

\begin{proof}[Proof of Lemma \ref{fipa}] We prove the result by assuming $x(t)=0$, then the general case	is obtained by repeating the same argument 	up to space translation. In particular we shorten the notations as follows: $\rho_{R,0}(t,x)$, $j_{R,0}(t,x)$, $e_{R,0}(t,x)$, $k_{R,0}(t,x)$ will be denoted by $\rho_{R}(t,x)$, $j_{R}(t,x)$, $e_{R}(t,x)$, $k_{R}(t,x)$.	We prove first \eqref{van}.	Assume by the absurd that 
$$\exists t_n\geq0, R_n\rightarrow \infty, \bar \epsilon>0$$	such that
\begin{equation}\label{hgh}
\int_{|x|>R_n}\Big( |\partial_x u(t_n,x)|^2 + |u(t_n,x)|^2 + |u(t_n,x)|^{p}\Big)
	dx>\bar \epsilon
	\end{equation}
	In fact, if it is not the case then we deduce by  the compactness assumption 
	$u(t_n)\rightarrow \bar u\neq 0$ in $H^1$ (notice that $\bar u \neq 0$ since we are assuming $u(t)\neq 0$ and $u(t)$ precompact). 
	In particular since $\bar u \neq 0$ there exists $\bar R>0$ such that
	$$
	\int_{|x|>\bar R} |\partial_x \bar u(x)|^2 + |\bar u(x)|^2 + |\bar u(x)|^{p}
	dx<\frac{\bar \epsilon}2
	$$ and hence
	the same property remains true for $u(t_n)$, for $n$ large enough. This gives a contradiction
	with \eqref{hgh}.

	Next we focus on \eqref{keyobservation1}.
	Assume by the absurd that it is not true. Then 
	$$\exists t_n\geq0, R_n\rightarrow \infty, 
	\epsilon_n\rightarrow 
	0$$ such that
\begin{equation}\label{nons}0
	<\big (\int \rho_{R_n}(t_n, x) dx \big) \cdot \big( \int
	k_{R_n}(t_n,x) dx\big)
 -\big( \int e_{R_n} (t_n, x) dx\big) \cdot \big( \int
 j_{R_n} (t_n, x) dx\big)\leq \epsilon_n,
	\end{equation}
	where the l.h.s.  inequality comes from \eqref{keyobservation}.
	Notice also that we can assume \begin{equation}\label{bounH2}
	\sup_n \|\varphi_{R_n} u(t_n)\|_{H^2}<\infty.\end{equation}
	In fact if it is not the case then, by looking at the expression of  $k_{R_n}(t_n,x)$,
	by the boundedness of  $u(t)$ in $H^1$ and 
	recalling \eqref{lowerbound}, we get
	$$\sup_n \int e_{R_n} (t_n, x) dx, \int
	j_{R_n} (t_n, x) dx<\infty,\quad \hbox{and} \quad \inf_n \int \rho_{R_n} (t_n, x) dx>0$$ and hence 
	\begin{align*}&
	\lim_{n\rightarrow \infty} \big (\int \rho_{R_n}(t_n, x) dx \big) \cdot \big( \int
	k_{R_n}(t_n,x) dx\big)-
	\big( \int e_{R_n} (t_n, x) dx\big) \cdot \big( \int
	j_{R_n} (t_n, x) dx\big)=\infty,
	\end{align*}
	which is a contradiction with \eqref{nons}.
	On the other hand by using the compactness assumption \eqref{hyp}
	we get the existence of $\bar u \in H^1$ such that
	$u(t_n)\rightarrow \bar u \neq 0$ strongly in $H^1$ and also
	by \eqref{bounH2} we deduce $\varphi_{R_n} u(t_n)\rightharpoonup \bar u$ weakly in $H^2$.
	Therefore, by the estimate \eqref{nons} (and by using the weak-semicontinuity of the $H^2$ norm) 
	we get 
	\begin{align*}0&<\big (\int \bar \rho(x) dx \big) \cdot \big( \int
	\bar k(x) dx\big) -
	\big( \int \bar e(x) dx\big) \cdot \big( \int
	\bar j (x) dx\big)\leq \epsilon_n,
	\end{align*}
	where $\bar \rho, \bar k, \bar e, \bar j$ are respectively the densities
	$\rho,k, e,j$ (see \eqref{densities}) computed on the function $\bar u$, and again on the l.h.s. we have used \eqref{keyobservation}. 
	We get a contradiction by taking the limit as $n\rightarrow \infty$.
	\end{proof}

As a consequence we get the following

\begin{lemma} Let $u$ be as in Lemma \ref{fipa}, then there exist
	$\bar \epsilon>0$ and $\bar R>0$ such that
	\begin{align}\label{keyobservation2}
	&\big (\int_{|x-x(t)|<R}  \rho(t,x) dx \big) \cdot \big( \int_{|x-x(t)|<R} k(t,x) dx\big)
	\\\nonumber -
	\big( \int_{|x-x(t)|< R} e(t,x) dx\big) & \cdot \big( \int_{|x-x(t)|< R}
	j(t,x) dx\big)>\bar \epsilon, \quad \forall R>\bar R, \quad \hbox{ a.e. } t\geq0,
	\end{align}
	where $\rho(t,x), e(t,x), k(t,x), j(t,x)$ are respectively the densities
	\eqref{densities} computed on the function $u(t,x)$. 
\end{lemma}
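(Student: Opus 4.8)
The plan is to deduce \eqref{keyobservation2} from the already established inequality \eqref{keyobservation1} by comparing, for the critical solution $u$, the densities computed on the sharp truncation of $u(t)$ to the ball $\{|x-x(t)|<R\}$ with those computed on the smooth truncation $\varphi_R(\cdot-x(t))u(t)$. Throughout I write $P,E,J,K$ for the four integrals appearing on the left-hand side of \eqref{keyobservation2} and $P_R,E_R,J_R,K_R$ for the corresponding integrals of the densities $\rho_{R,x(t)},e_{R,x(t)},j_{R,x(t)},k_{R,x(t)}$ from \eqref{keyobservation1}. Assuming as in the previous proofs that $x(t)=0$, I would first record the uniform bounds that make the comparison effective: precompactness in \eqref{hyp} yields $\sup_{t\geq0}\|u(t)\|_{H^1}=:M_1<\infty$, so $E$, $J$ and the lower–order part $\int_{|x|<R}\bigl(2u_x^2u^k+\tfrac12u^{2k+2}\bigr)\,dx$ of $K$ are bounded uniformly in $t$ and $R$; and \eqref{van} together with \eqref{lowerbound} gives, for $R$ larger than some $R_1$, the uniform lower bound $P=\int_{|x|<R}u^2\,dx\geq\tfrac12\inf_{t}\|u(t)\|_{L^2}^2=:c_0>0$.

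Next I would compare the lower–order densities. Since $\varphi_R\equiv1$ on $\{|x|<R/2\}$, $\varphi_R\equiv0$ on $\{|x|>R\}$, $0\le\varphi_R\le1$, and $|\varphi_R'|\lesssim R^{-1}$, $|\varphi_R''|\lesssim R^{-2}$, expanding $\partial_x(\varphi_R u)=\varphi_R'u+\varphi_Ru_x$ and invoking the vanishing property \eqref{van} for the exponents $p=2$ and $p=k+2$ shows that
$$
|P-P_R|+|E-E_R|+|J-J_R|=o_R(1)\quad\text{uniformly in }t\geq0,
$$
where $o_R(1)\to0$ as $R\to\infty$; every term carrying a factor $\varphi_R'$ or $\varphi_R''$ is supported in the annulus $\{R/2<|x|<R\}$ and is absorbed by the $R^{-1}$ (or $R^{-2}$) gain together with the uniform $H^1$ bound.

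The crux is the second–order term $\tfrac32\int u_{xx}^2$ inside $K$, since \eqref{van} controls only $u$ and $u_x$ and says nothing about $u_{xx}$ in the transition region. Here I would need only the one–sided estimate
$$
\int(\partial_{xx}(\varphi_Ru))^2\,dx\le\int_{|x|<R}u_{xx}^2\,dx+o_R(1),
$$
obtained by writing $\partial_{xx}(\varphi_Ru)=\varphi_Ru_{xx}+2\varphi_R'u_x+\varphi_R''u$, using $\int\varphi_R^2u_{xx}^2\le\int_{|x|<R}u_{xx}^2$, bounding $\int(2\varphi_R'u_x+\varphi_R''u)^2\lesssim R^{-2}\|u\|_{H^1}^2$, and—crucially—integrating by parts in the cross term $2\int\varphi_Ru_{xx}(2\varphi_R'u_x+\varphi_R''u)\,dx$ so as to transfer one derivative off $u_{xx}$ and reduce it to integrals of $u^2$ and $u_x^2$ against derivatives of $\varphi_R$, all of which carry at least one factor $R^{-1}$ and hence are $o_R(1)$. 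This yields $K\ge K_R-o_R(1)$ uniformly in $t$, and no upper bound on $K$ is needed: the excess second derivative in the annulus only increases $K$.

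Finally I would conclude by a dichotomy on the size of $K$, which disposes of the fact that $K_R$ (and $K$) need not be bounded uniformly in $t$. Fix a large threshold $\Lambda$. If $\int_{|x|<R}u_{xx}^2\,dx\geq\Lambda$, then since $P\geq c_0$ and $EJ$ is uniformly bounded, $PK-EJ\geq\tfrac32c_0\Lambda-C$ exceeds $\bar\epsilon$ once $\Lambda$ is chosen large. If instead $\int_{|x|<R}u_{xx}^2\,dx<\Lambda$, the one–sided estimate gives $K_R\le\tfrac32(\Lambda+o_R(1))+C$ bounded, so the errors $(P-P_R)K_R$, $P\,o_R(1)$ and $EJ-E_RJ_R$ are all $o_R(1)$, and the comparison yields
$$
PK-EJ\ge P_RK_R-E_RJ_R-o_R(1)>\bar\epsilon-o_R(1).
$$
Choosing $\bar R\geq R_1$ so large that $o_R(1)<\bar\epsilon/2$ throughout this second regime, we obtain $PK-EJ>\bar\epsilon/2$ for all $R>\bar R$ and a.e. $t\geq0$, which is \eqref{keyobservation2} with $\bar\epsilon/2$ in place of $\bar\epsilon$. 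The main obstacle is precisely the uncontrolled $u_{xx}^2$ in the transition annulus, and the one–sided estimate combined with the dichotomy is exactly what circumvents it.
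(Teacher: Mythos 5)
Your proposal is correct and is essentially the paper's own argument: the same comparison of the sharply truncated densities with the smoothly truncated ones of \eqref{keyobservation1} via the vanishing property \eqref{van}, and the same one-sided bound for the second-derivative term (the paper's \eqref{alb3}), obtained exactly as you do by expanding $\partial_{xx}(\varphi_R u)$ and integrating by parts to move derivatives off $u_{xx}$ onto the cutoff. Your concluding dichotomy on the size of $\int_{|x-x(t)|<R}u_{xx}^2\,dx$ is a clean way to carry out the final combination step, which the paper compresses into ``the proof follows by combining'' \eqref{keyobservation1} with \eqref{alb}--\eqref{alb3}; the absence of a uniform-in-$t$ bound on the $k$-density does need to be addressed there, and your argument (or, alternatively, absorbing the error $o_R(1)\int k_{R,x(t)}\,dx$ into the product using the uniform lower bound on $\int\rho_{R,x(t)}\,dx$) fills that in.
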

\begin{proof} We use the same notations as in Lemma \ref{fipa}.
	We split \begin{equation}\label{alb}\int k_{R, x(t)}(t,x) dx=  \frac 32
	\int  |\partial_{xx} (\varphi_{R, x(t)} (x) u(t, x))|^2 dx
	+ \int \tilde k_{R, x(t)}(t,x) dx\end{equation}
	where $\tilde k_{R, x(t)}(t,x)$ is the density $$
	\tilde k (\psi(x))=2(\partial_x \psi(x))^2 (\psi(x))^{k}+\frac 12 (\psi(x))^{2k+2}$$
	computed along the function $\psi(x)=\varphi_{R, x(t)} (x) u(t, x)$.
	
	By using \eqref{van}, we infer
	\begin{equation}\label{alb2}\int \tilde k_{R, x(t)}(t,x) dx - \int_{|x-x(t)|<R} \tilde k(t,x)dx=o_R(1),
	\end{equation}
	where $\lim_{R\rightarrow \infty} o_R(1)=0$ uniformly w.r.t. to $t$
	and  $\tilde k(t,x)=\tilde k(u(t,x))$.
	
	By combining again \eqref{van}, with the cut--off properties of $\varphi_R(x)$ and 
	with the uniform boundedness of $u(t)$ in $H^1$, we obtain
	$$\frac 32 \int  |\partial_{xx} (\varphi_{R, x(t)} (x) u(t, x))|^2 dx
	-\frac 32  \int  |\varphi_{R, x(t)} (x) \partial_{xx} u(t, x))|^2 =o_R(1).$$
	By summing this identity with the following trivial one
	$$\frac 32 \int_{|x-x(t)|<R}  |\partial_{xx} u(t, x))|^2 dx -  
	\frac 32 \int_{|x-x(t)|<R }  |\partial_{xx} u(t, x))|^2 dx =0,
	$$
	we get
	\begin{equation}\label{alb3}\frac 32 \int  |\partial_{xx} (\varphi_{R, x(t)} (x) u(t, x))|^2 dx 
	-  
	\frac 32 \int_{|x-x(t)|<R}  |\partial_{xx} u(t, x))|^2 dx\leq o_R(1),
	\end{equation}
	where we have used the properties of $\varphi$ that guarantee 
	$$-\frac 32  \int  |\varphi_{R, x(t)} (x) \partial_{xx} u(t, x))|^2 +
	\frac 32 \int_{|x-x(t)|<R }  |\partial_{xx} u(t, x))|^2 dx \geq 0.$$
	The proof follows by  combining \eqref{keyobservation1}
	with \eqref{alb}, \eqref{alb2}, \eqref{alb3}
	and with the following facts that in turn come from \eqref{van}:
	\begin{align*}&\int j_{R, x(t)}(t,x) dx - \int_{|x-x(t)|<R} j(t,x) dx=o_R(1)
	\\
	\nonumber
	&\int e_{R, x(t)}(t,x) dx - \int_{|x-x(t)|<R} e(t,x) dx
	= o_R(1),
	\\
	\nonumber&\int \rho_{R, x(t)}(t,x) dx - \int_{|x-x(t)|<R} \rho(t,x) dx=o_R(1).
	\end{align*}
\end{proof}

Finally, we establish our main result is this section by showing that indeed the critical solution constructed in Section \ref{sec5} cannot exist.

\begin{proposition}\label{finalprop}
	There does not exist any nontrivial solution $u$ to \eqref{gkdv},
	that satisfies \eqref{hyp} for a suitable $x(t)\in \mathbb R$.
\end{proposition}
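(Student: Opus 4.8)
The plan is to argue by contradiction, combining the interactive a priori estimate \eqref{ideens} with the pointwise-in-time lower bound \eqref{keyobservation2}, and exploiting the fact that, for $k$ even, all four densities in \eqref{densities} are nonnegative. Suppose such a nontrivial $u$ exists. First I would verify the hypotheses of Lemma \ref{fipa}: the precompactness \eqref{hyp} is assumed; the regularity $u(t)\in H^2_{loc}$ for a.e.\ $t$ follows from the local smoothing in Proposition \ref{regprop}; and the lower bound \eqref{lowerbound} is immediate from conservation of mass, since $\|u(t)\|_{L^2}^2=M[u(t)]=M[u_0]>0$ is constant (with $u_0\neq0$ because $u$ is nontrivial). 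Hence Lemma \ref{fipa} and the lemma that follows it apply and furnish $\bar\epsilon>0$ and $\bar R>0$ for which \eqref{keyobservation2} holds for a.e.\ $t\geq0$.

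Next I would fix the weight. Choose $R>\bar R$ (to be enlarged below) and pick $Q\in C^\infty(\R)$ with $0\leq Q'\leq 1$, $Q'(s)=1$ for $|s|\leq 2R$, and $Q'$ of bounded support; this $Q$ is admissible in \eqref{ideens} and bounded, say $\|Q\|_{L^\infty}\leq C_Q<\infty$. Two observations drive the argument. Since $\rho,e\geq0$ and $\int\rho\,dx=M[u_0]$, $\int e\,dx=E[u_0]$ are conserved, the left-hand side of \eqref{ideens} is uniformly bounded: for all $t_1<t_2$,
\[
\Big|\int\!\!\int Q(x-y)\rho(t_2,x)e(t_2,y)\,dx\,dy-\int\!\!\int Q(x-y)\rho(t_1,x)e(t_1,y)\,dx\,dy\Big|\leq 2C_Q\,M[u_0]\,E[u_0].
\]
Writing $\mathcal M_R(t)=\int_{|x-x(t)|<R}\rho$ and likewise $\mathcal E_R,\mathcal J_R,\mathcal K_R$, nonnegativity of $\rho k$ together with $Q'\equiv1$ on $|x-y|\leq2R$ gives, by restricting both variables to $|{\cdot}-x(t)|<R$,
\[
\int\!\!\int Q'(x-y)\rho(t,x)k(t,y)\,dx\,dy\geq \mathcal M_R(t)\,\mathcal K_R(t),
\]
while $0\leq Q'\leq1$ and the vanishing property \eqref{van} (note $j$ and $e$ involve only $u$ and $\partial_x u$, so their tails outside $|x-x(t)|>R$ are $o_R(1)$ uniformly in $t$) yield $\int\!\!\int Q'(x-y)j(t,x)e(t,y)\,dx\,dy\leq \mathcal J_R(t)\,\mathcal E_R(t)+o_R(1)$. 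Subtracting and invoking \eqref{keyobservation2}, for a.e.\ $t\geq0$ and $R$ fixed large enough,
\[
\int\!\!\int Q'(x-y)\rho(t,x)k(t,y)\,dx\,dy-\int\!\!\int Q'(x-y)j(t,x)e(t,y)\,dx\,dy\geq \bar\epsilon-o_R(1)\geq \tfrac{\bar\epsilon}{2}.
\]

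Finally I would collect the pieces. Taking $t_1=0$, $t_2=T$ in \eqref{ideens} and combining the two displayed bounds gives $\tfrac{\bar\epsilon}{2}\,T\leq 2C_Q\,M[u_0]\,E[u_0]$ for every $T>0$, which is absurd as $T\to\infty$; this contradiction rules out any such nontrivial $u$ (the case \eqref{hyp1}, valid for $t\leq0$, is identical upon integrating over $[-T,0]$ and letting $T\to\infty$). The main obstacle is the second-derivative density $k$, whose tail is \emph{not} controlled by \eqref{van}; this is precisely why the truncated inequality \eqref{keyobservation2} is indispensable and why I lower-bound the $\rho k$ term by its restriction to the ball rather than attempting to estimate the full integral. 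Everything else reduces to nonnegativity of the densities and conservation of mass and energy.
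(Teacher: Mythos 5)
Your proof is correct and takes essentially the same route as the paper: both apply the interaction estimate \eqref{ideens} with a weight whose derivative equals $1$ on $\{|x-y|\le 2R\}$, lower-bound the $\rho k$ term by its restriction to the product of balls centered at $x(t)$ using nonnegativity (thus never needing tail control on the second-derivative density), control the $je$ term via the vanishing property \eqref{van}, and invoke \eqref{keyobservation2} to force linear-in-$T$ growth of a quantity that conservation of mass and energy keeps uniformly bounded. The only cosmetic difference is that you estimate the $je$ term by $\mathcal{J}_R(t)\mathcal{E}_R(t)+o_R(1)$ using $0\le Q'\le 1$ and the product structure of the double integral, whereas the paper splits it over the product of balls, the rest of $\{|x-y|<2R\}$, and $\{|x-y|>2R\}$; both reductions rest on exactly the same use of \eqref{van}.
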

\begin{proof}
	We introduce a function $\Phi\in C^\infty(\mathbb R)$ such that:
	\begin{itemize}
		\item $\Phi(x)=x \quad \forall\; |x|<1$;
		\item $\Phi'(x)\geq 0$;
		\item $|\Phi'(x)|=0$ for $|x|>2$.
	\end{itemize}
	Next, we consider for $R>0$ the rescaled functions
	$Q_R(x)= {2R}\Phi(\frac{x}{2R})$
	and  use \eqref{ideens} with $Q=Q_R$ to obtain
	\begin{align*}
	&\int\int Q_R(x-y) \rho(t_2, x)e(t_2,y) dx dy - 
	\int\int Q_R(x-y) \rho(t_1, x)e(t_1,y) dx dy dt \geq \\\nonumber
	&-\int_{t_1}^{t_2} \int\int Q_R^{'}(x-y)  j(t, x)e(t,y) dx dy dt + 
	\int_{t_1}^{t_2} \int\int Q_R^{'}(x-y) \rho(t, x) k(t,y) dx dy dt
	\\\nonumber & = \int_{t_1}^{t_2} (-I_R(t)+II_R(t)) dt.
	\end{align*}

First observe that
	$$I_R(t)= \int \int_{|x-y|<2R}  j(t, x)e(t,y) dx dy
	+ \int \int_{|x-y|>2R} Q_R^{'}(x-y) j(t, x)e(t,y) dx dy
	$$
	and
	$$II_{R}(t)=\int\int_{|x-y|<2R}  \rho(t, x) k(t,y) dx dy
	+ \int\int_{|x-y|>2R} Q_R^{'}(x-y)  \rho(t, x) k(t,y) dx dy.$$
	Next, for any fixed $R>0$ and $t\geq0$, we introduce
	$$Q_{x(t),R}= [x(t)-R, x(t)+R]\times
	[x(t)-R, x(t)+R]$$
	and observe that
	\begin{align*} I_R(t)&= 
	\int \int_{Q_{x(t),R} }  j(t, x)e(t,y) dx dy\\\nonumber +
	\int \int_{\{|x-y|<2R\}\setminus Q_{x(t),R} }  & j(t, x)e(t,y) dx dy
	+ \int \int_{|x-y|>2R}  Q_R^{'}(x-y) j(t, x)e(t,y) dx dy.
	\end{align*}
	Now, by \eqref{van}, we have
	$$
	\lim_{R\rightarrow \infty}
	\int \int_{\{|x-y|<2R\}\setminus Q_{x(t),R} }  j(t, x)e(t,y) dx dy
	+ \int \int_{|x-y|>2R}  Q_R^{'}(x-y) j(t, x)e(t,y) dx dy=0.$$
	Hence,
	$$\lim_{R\rightarrow \infty} |I_R(t)- 
	\int \int_{Q_{x(t),R} }  j(t, x)e(t,y) dx dy|=0.$$
	Moreover,
	\begin{align*} II_R(t)=& 
	\int \int_{Q_{x(t),R} }  \rho(t, x)k(t,y) dx dy\\\nonumber& +
	\int \int_{\{|x-y|<2R\}\setminus Q_{x(t),R} }  \rho(t, x) k(t,y) dx dy
	\\\nonumber & + \int \int_{|x-y|>2R}  Q_R^{'}(x-y) \rho(t, x)k(t,y) dx dy
	\geq \int \int_{Q_{x(t),R} }  \rho(t, x)k(t,y) dx dy.
	\end{align*}
	Since, by \eqref{keyobservation2},
	$$\int \int_{Q_{x(t),R} }  \rho(t, x)k(t,y) dx dy- \int \int_{Q_{x(t),R} }  j(t, x)e(t,y) dx dy\geq \delta_0>0, \hbox{ a.e. } t\geq0,$$
	by combining the estimates above on $I_R(t)$ and $II_R(t)$
   we deduce
	the existence of $\delta_0>0$ and $R_0>0$ such that
	$$
	\int\int Q_R(x-y) \rho(t_1, x)e(t_1,y) dx dy
	- \int\int Q_R(x-y) \rho(t_2, x)e(t_2,y) dx dy $$$$\geq \delta_0 (t_2-t_1), \quad \forall R>R_0.$$
	
	By choosing $t_1=0$, $ t_2=T$, and $R=2R_0$, we obtain
	$$\int\int Q_{2R_0}(x-y) \rho(T, x)e(T,y) dx dy - 
	\int\int Q_{2R_0}(x-y) \rho(-T, x)e(-T,y) dx dy\geq  \delta_0 T
	$$
	and we get an absurd by taking the limit as $T\rightarrow \infty$,
	and by noticing that the l.h.s. is uniformly bounded with respect to $T$
	by the conservation of the energy.
	
\end{proof}


\bibliographystyle{mrl}

\end{document}